\setlist[enumerate]{labelsep=*, leftmargin=1.5pc}
\setlist[enumerate]{label=\normalfont(\roman*), ref=\roman*}
\newtheorem{thm}{Theorem}[section]
\newtheorem{thm*}[]{Theorem}
\newtheorem{lemma}[thm]{Lemma}
\newtheorem{cor}[thm]{Corollary}
\newtheorem{prop}[thm]{Proposition}
\newtheorem{prop*}[]{Proposition}
\newtheorem{conjecture}[thm]{Conjecture}
\newtheorem{conjecture*}[]{Conjecture}
\theoremstyle{definition}
\newtheorem{example}[thm]{Example}
\newtheorem{definition}[thm]{Definition}
\newtheorem{definition*}[]{Definition}
\numberwithin{equation}{section}
\newcommand{\Z}{\mathbb{Z}}
\newcommand{\Q}{\mathbb{Q}}
\newcommand{\R}{\mathbb{R}}
\newcommand{\C}{\mathbb{C}}
\newcommand{\bF}{\mathbb{F}}
\newcommand{\bK}{\mathbb{K}}
\newcommand{\mO}{\mathcal{O}}
\newcommand{\mA}{\mathcal{A}}
\newcommand{\eps}{\varepsilon}
\ProvideTextCommand{\DJ}{OT1}{\raisebox{0.25ex}{-}\kern-0.4em D}
\newcommand{\pr}{\mathbb{P}}
\newcommand{\on}{\operatorname}
\newcommand{\ol}{\overline}
\newcommand{\wt}{\widetilde}
\newcommand{\mfk}{\mathfrak}
\newcommand{\wh}{\widehat}
\newcommand{\mat}{\left(\begin{array}}
\newcommand{\tam}{\end{array}\right)}
\newcommand{\ralg}{\on{Ru}^{\textnormal{alg}}}
\newcommand{\ealg}{e^{\textnormal{alg}}}
\newcommand{\calg}{c^{\textnormal{alg}}}
\newcommand{\cech}{c^{\textnormal{ECH}}}
\newcommand{\casy}{c^{\textnormal{asy}}}
\newcommand{\lf}{\lfloor}
\newcommand{\rf}{\rfloor}
\newcommand{\lc}{\left\lceil}
\newcommand{\rc}{\right\rceil}
\newcommand{\ip}{\text{\textnormal{IP}}}
\newcommand{\nefqc}{\on{nef}^\text{\textnormal{qc}}}
\newcommand{\Nefqc}{\on{Nef}^\text{\textnormal{qc}}}
\newenvironment{dedication}
  {
   \vspace*{-15pt}
   \itshape             
   \center          
  }
  {
   \par
   \vspace{1pt}
  }
\begin{document}
\author[B.~Wormleighton]{B.~Wormleighton}
\address{Department of Mathematics\\University of California at Berkeley\\Berkeley, CA\\94720\\USA}
\email{b.wormleighton@berkeley.edu}
\title[]{Algebraic capacities}
\maketitle

\begin{dedication}
Black lives matter.
\end{dedication}

\begin{abstract} We study invariants coming from certain optimisation problems for nef divisors on surfaces. These optimisation problems arise in work of the author and collaborators tying obstructions to embeddings between symplectic $4$-manifolds to questions of positivity for (possibly singular) algebraic surfaces. We develop the general framework for these invariants and prove foundational results on their structure and asymptotics. We describe the connections these invariants have to Embedded Contact Homology (ECH) and the Ruelle invariant in symplectic geometry, and to min-max widths in the study of minimal hypersurfaces. We use the first of these connections to obtain optimal bounds for the sub-leading asymptotics of ECH capacities for many toric domains.
\end{abstract}

\section{Introduction}

Algebraic capacities are invariants of a polarised algebraic surface that arose from work of the author and collaborators \cite{bwo,bwt,cw} to study symplectic embedding obstructions using techniques from algebraic geometry. In this paper we develop the general framework to incorporate these invariants into the wider setting of algebraic positivity \cite{laz1,laz2}. Algebraic capacities also connect to other areas of mathematics such as minimal hypersurfaces, convex optimisation, and symplectic geometry as already mentioned. We utilise some landmark ideas and results in these fields to inspire structural results for algebraic capacities in purely algebro-geometric terms. We will especially press into the connection between algebraic capacities and Embedded Contact Homology (ECH) to produce refined information about symplectic embedding obstructions and their asymptotics using algebro-geometric methods.

\subsection{Algebraic capacities as positivity invariants}

Throughout the paper a
$$\text{\emph{weakly polarised / pseudo-polarised / polarised surface}}$$
means a pair $(Y,A)$ with $Y$ a normal projective algebraic surface and $A$ an $\R$-divisor on $Y$ that is
$$\text{big / big and nef / ample}$$
and $\Q$-Cartier. Adjectives applied to $(Y,A)$ are understood to apply to whichever of $Y$ or $A$ it makes sense for them to apply to. For instance, a polarised smooth surface $(Y,A)$ is a pair consisting of a smooth projective algebraic surface $Y$ and an ample $\R$-divisor on $Y$.

\begin{definition*} Given a weakly polarised surface $(Y,A)$ the \emph{$k$th algebraic capacity} of $(Y,A)$ is
$$\calg_k(Y,A):=\inf_{D\in\Nefqc(Y)}\{D\cdot A:\chi(D)\geq k+\chi(\mO_Y)\}$$
where $\Nefqc(Y)$ denotes the set of nef $\Q$-Cartier $\Z$-divisors on $Y$.
\end{definition*}

The infimum is always attained and, in many situations, agrees with the infimum taken over all nef $\Q$-Cartier $\Q$- or $\R$-divisors on $Y$. Some natural questions are the following:
\begin{itemize}
\item how does $\calg_k(Y,A)$ vary as $A$ moves in the big cone of $Y$?
\item if $\wt{Y}\to Y$ is a resolution of singularities, how are $\calg_k(Y,A)$ and $\calg_k(\wt{Y},\wt{A})$ related?
\item what are the (sub-leading) asymptotics of $\calg_k(Y,A)$ as $k\to\infty$?
\item what do optimisers for $\calg_k(Y,A)$ look like?
\end{itemize}

We will give answers to each of these as part of the process of building a framework for algebraic capacities, which we summarise here.

\begin{thm*}[Prop.~\ref{prop:chamber} + Cor.~\ref{cor:cap_cont}] Suppose $Y$ is a projective $\Q$-factorial surface. For each $k$ the function $\calg_k(Y,\cdot)\colon A\mapsto\calg_k(Y,A)$ is continuous on the big cone of $Y$. Moreover, there is a locally finite chamber decomposition with respect to which $\calg_k(Y,\cdot)$ is piecewise-linear. In \S\ref{sec:degen} we describe how to relate the values of $\calg_k(Y,\cdot)$ on certain walls of the nef cone to algebraic capacities on the corresponding contractions of $Y$.
\end{thm*}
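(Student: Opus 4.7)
The plan is to fix $A_0$ in the big cone of $Y$, identify a finite set of candidate optimising divisors that serves uniformly in a neighbourhood of $A_0$, and then observe that $\calg_k(Y,\cdot)$ is locally the minimum of finitely many linear functionals in $A$. Continuity and piecewise-linearity follow at once, and the local descriptions glue to a global locally finite chamber decomposition.

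First I would produce a uniform upper bound. Let $U$ be a compact neighbourhood of $A_0$ inside the big cone, and choose any $D_0 \in \Nefqc(Y)$ satisfying $\chi(D_0) \geq k + \chi(\mO_Y)$; such a $D_0$ exists since sufficiently large multiples of an ample $\Q$-Cartier $\Z$-divisor have Euler characteristic growing quadratically by Riemann--Roch on a normal $\Q$-factorial surface. This gives $\calg_k(Y,A) \leq D_0 \cdot A \leq M$ for some $M = M(U)$ and every $A \in U$.

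Second I would cut down to finitely many candidates. Fix an ample $\Q$-Cartier $\Z$-divisor $H$. The key lattice-finiteness observation is that the set of $D \in \Nefqc(Y)$ with $D \cdot H \leq C$ is finite for every $C$, since ampleness of $H$ makes the slice cut by $D \cdot H = 1$ a compact cross-section of the real nef cone. Because the big cone is open and $U$ is compact, there exists $\epsilon = \epsilon(U) > 0$ such that $A - \epsilon H$ is big (hence pseudo-effective) for every $A \in U$, giving $D \cdot A \geq \epsilon \, D \cdot H$ whenever $D$ is nef. Every $D$ relevant for $\calg_k(Y,A)$ with $A \in U$ thus lies in the finite set $S := \{D \in \Nefqc(Y) : \chi(D) \geq k + \chi(\mO_Y),\ D \cdot H \leq M/\epsilon\}$; in particular the infimum in the definition is attained.

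On $U$ one then has $\calg_k(Y,A) = \min{D \cdot A : D \in S}$, exhibiting $\calg_k(Y,\cdot)$ as the minimum of finitely many linear functionals on $U$. This is continuous and piecewise-linear, with local chamber walls along the hyperplanes $\{(D-D') \cdot A = 0\}$ for $D,D' \in S$. Varying $U$ and taking a common refinement of the local chamber structures yields a locally finite chamber decomposition on the whole big cone.

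The main obstacle is guaranteeing the uniform finiteness in the second step: candidate optimisers must not escape in the Néron--Severi lattice as $A$ ranges over $U$. This reduces to the compactness of a hyperplane slice of the real nef cone transverse to an ample class, together with the uniform bigness estimate that $A - \epsilon H$ is big for every $A \in U$. A secondary technical point is to interpret $\chi(D)$ as a function of the numerical class of $D$ via Riemann--Roch for $\Q$-Cartier divisors on a normal surface, so that the constraint set is genuinely a lattice subset of Néron--Severi.
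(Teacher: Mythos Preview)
Your proposal is correct and rests on the same compactness principle as the paper --- namely that nef classes with bounded pairing against an ample (or big) class form a bounded, hence lattice-finite, set --- but your packaging is tighter. The paper splits the argument in two: it first establishes the existence of chambers by a contradiction using a sequence $A_n\to A$ whose optimisers must stabilise by lattice-finiteness, and then separately proves local finiteness of the chamber structure by bounding optimisers over regions $R_{A_0,\delta}$ bounded away from the boundary of the big cone. You instead fix a compact $U$ at the outset, use the uniform bigness of $A-\epsilon H$ to get $D\cdot A\ge\epsilon\,D\cdot H$ and hence a single finite candidate set $S$ valid across all of $U$, and read off continuity, piecewise-linearity, and local finiteness simultaneously from the expression $\calg_k(Y,A)=\min\{D\cdot A:D\in S\}$. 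This avoids the sequence argument entirely and gives the chamber walls explicitly as hyperplanes $\{(D-D')\cdot A=0\}$. Your closing remark about $\chi(D)$ depending only on the numerical class is a point the paper leaves implicit; it is needed for both arguments since $\Nefqc(Y)$ is defined in $\on{NS}(Y)_\Z$.
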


\begin{thm*}[Thm.~\ref{thm:smooth_asy} + Prop.~\ref{prop:sing_asy}] \label{thm:asy1} Suppose $(Y,A)$ is a pseudo-polarised surface with $Y$ either smooth or toric. Then
$$\lim_{k\to\infty}\frac{\calg_k(Y,A)^2}{k}=2A^2$$
\end{thm*}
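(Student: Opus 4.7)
The plan is to establish matching asymptotic upper and lower bounds on $\calg_k(Y,A)^2/k$ via the Hodge Index Theorem and the Riemann--Roch/Ehrhart identity $\chi(D) - \chi(\mO_Y) = \tfrac12 D^2 + O(\|D\|)$ for nef $\Q$-Cartier $\Z$-divisors $D$; the standard Riemann--Roch formula supplies this when $Y$ is smooth, and the Ehrhart asymptotics of the moment polytope supply it when $Y$ is toric.

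For the upper bound, the idea is to take $D_k$ proportional to $A$. When $A \in N^1(Y)_\Q$, pick $m \in \N$ so that $mA$ is a $\Z$-divisor and set $D_k = n_k(mA)$ for $n_k \in \N$ the least integer with $\chi(D_k) \geq k + \chi(\mO_Y)$; the Riemann--Roch/Ehrhart identity gives $n_k = \sqrt{2k/(m^2A^2)} + O(1)$, whence $D_k \cdot A = \sqrt{2k\,A^2} + O(1)$. For irrational $A$, approximate $A$ by rational ample classes $B_j \to A$ in $N^1(Y)_\R$ (ample classes are dense in the nef cone, and rational points are dense in the ample cone), apply the rational construction to each $B_j$, and obtain
$$\limsup_{k\to\infty}\calg_k(Y,A)^2/k \;\leq\; 2(B_j\cdot A)^2/B_j^2 \qquad \text{for each } j.$$
Sending $B_j \to A$ and noting that $(B_j\cdot A)^2/B_j^2 \to A^2$ yields $\limsup_k\calg_k(Y,A)^2/k \leq 2A^2$.

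For the lower bound, let $D_k \in \Nefqc(Y)$ attain the infimum. Writing $A = P + E$ with $P$ an ample $\R$-divisor and $E$ an effective $\R$-divisor---possible since $A$ is big---one has $D_k\cdot P \leq D_k\cdot A = O(\sqrt{k})$ (using the upper bound just established), and ampleness of $P$ forces $\|D_k\| = O(\sqrt{k})$. The Riemann--Roch/Ehrhart estimate then gives $D_k^2 \geq 2k - O(\sqrt{k})$, and the Hodge Index Theorem (applicable since $A^2 > 0$) yields
$$\calg_k(Y,A)^2 \;=\; (D_k\cdot A)^2 \;\geq\; D_k^2 \cdot A^2 \;\geq\; (2k - O(\sqrt{k}))\,A^2,$$
so $\liminf_k\calg_k(Y,A)^2/k \geq 2A^2$.

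The main technical obstacle is the construction step in the upper bound: producing integer nef $\Q$-Cartier divisors approximating prescribed real multiples of $A$ requires quantitative control of rounding in $N^1(Y)$ and verification that the rounded classes remain nef and satisfy the Euler-characteristic constraint. The smooth/toric dichotomy in the hypothesis reflects that either the standard Riemann--Roch formula (smooth case) or a direct lattice-point argument on the moment polytope---where one may further reduce to $T$-invariant divisors by averaging over the torus action---can drive the proof, whereas for general singular surfaces the Riemann--Roch correction terms at the singularities obstruct a clean asymptotic.
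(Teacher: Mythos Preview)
Your argument is correct and is a genuinely different route from the paper's. The paper introduces an auxiliary ``asymptotic capacity'' $\casy_k(Y,A)$ defined over nef $\R$-divisors, computes its leading asymptotics explicitly by working in a Hodge-diagonal basis $e_0=A,e_1,\dots,e_s$ for $N^1(Y)_\R$, and then shows that $\calg_k$ and $\casy_k$ have the same limit by a rounding argument; the singular toric case is then deduced by pulling back along a toric resolution and invoking the equality $\calg_k(\wt{Y},\pi^*A)=\calg_k(Y,A)$ established earlier. You instead obtain matching bounds directly: the upper bound by testing on integer multiples of a rational approximation $B_j$ to $A$ (with Riemann--Roch or Ehrhart supplying the Euler-characteristic asymptotics), and the lower bound by bounding $\|D_k\|$ via ampleness and then applying the Hodge Index inequality $(D_k\cdot A)^2\geq D_k^2\cdot A^2$ to the optimiser. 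Your treatment of the toric case is also self-contained via lattice-point estimates rather than going through a resolution. The paper's approach has the advantage of producing explicit approximate optimisers of the form $\sqrt{2k/A^2}\,A+\delta$, which feeds into the later sub-leading asymptotic results; your approach is cleaner for the leading-order statement alone and makes the role of Hodge Index more transparent. One small wording issue: ample classes are the \emph{interior} of the nef cone rather than dense in it, but since $A$ is big and nef you can perturb into the ample cone and then to rational points, so the approximation step goes through.
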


That is, $\calg_k(Y,A)$ grows like $\sqrt{2A^2k}$. We define error terms
$$\ealg_k(Y,A):=\calg_k(Y,A)-\sqrt{2A^2k}$$

\begin{thm*}[Thm.~\ref{thm:lims_z}] \label{thm:asy2} Suppose $(Y,A)$ is a pseudo-polarised surface with $Y$ either smooth or toric. If $A=qA_0$ for some $q\in\R_{>0}$ and some big and nef $\Z$-divisor $A_0$ then
$$\limsup_{k\to\infty}\ealg_k(Y,A)=\on{gap}(Y,A)+\frac{1}{2}K_Y\cdot A\text{ and }\liminf_{k\to\infty}\ealg_k(Y,A)=\frac{1}{2}K_Y\cdot A$$
for some constant $\on{gap}(Y,A)$ associated to the pair $(Y,A)$.
\end{thm*}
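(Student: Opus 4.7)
The plan is to convert the Euler-characteristic constraint into intersection-theoretic form via Riemann--Roch, then extract the sub-leading asymptotics by combining the Hodge index theorem with an explicit test family. For $Y$ smooth, Riemann--Roch gives $\chi(D)-\chi(\mO_Y) = \tfrac{1}{2} D\cdot(D-K_Y)$, so the constraint $\chi(D)\ge k+\chi(\mO_Y)$ becomes $D^2 - D\cdot K_Y \ge 2k$. Decomposing $D = \lambda A + E$ with $E\cdot A = 0$, the Hodge index theorem makes the orthogonal complement of $A$ in $N^1(Y)_\R$ negative definite, so $E^2 \le 0$, and Cauchy--Schwarz there gives $|E\cdot K_Y| \le \sqrt{-E^2}\cdot c_1$ with $c_1 = \sqrt{-(K_Y^\perp)^2}$. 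A direct univariate minimisation reduces the constraint to $\lambda^2 A^2 - \lambda A\cdot K_Y \ge 2k - c_1^2/4$, and solving this quadratic (using $\lambda \ge 0$, which holds since $D$ and $A$ are nef) gives
\[D\cdot A \;\ge\; \sqrt{2A^2 k} + \tfrac{1}{2} K_Y\cdot A + O(k^{-1/2}).\]
In particular $\liminf_{k\to\infty}\ealg_k(Y,A) \ge \tfrac{1}{2} K_Y\cdot A$. The toric case proceeds along the same lines using an Ehrhart-theoretic formula for $\chi(D)$ (with $D$ encoded by its lattice polygon) and a Brunn--Minkowski inequality for mixed volumes in place of Hodge index.

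For the matching $\liminf$ upper bound, and as a first approximation to the $\limsup$, I test against the one-parameter family $D = mA_0$, $m \in \Z_{>0}$. Here the Riemann--Roch constraint becomes $m \ge \alpha_k := \sqrt{2k/A_0^2} + \tfrac{A_0\cdot K_Y}{2A_0^2} + O(k^{-1/2})$, and choosing $m = \lceil\alpha_k\rceil$ yields
\[\ealg_k(Y,A) \;\le\; \tfrac{1}{2} K_Y\cdot A + qA_0^2\bigl(\lceil\alpha_k\rceil - \alpha_k\bigr) + o(1).\]
Since $\alpha_{k+1}-\alpha_k \to 0$, the fractional parts of $\alpha_k$ are dense in $[0,1)$, so along a subsequence where they tend to $0$ one has $\liminf \ealg_k \le \tfrac{1}{2} K_Y\cdot A$, closing the $\liminf$ identity.

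The $\limsup$ identity is more delicate. The constant $\on{gap}(Y,A)$ will be defined intrinsically in terms of the worst-case spacing of the discrete set $\mathcal{S}(Y,A) := \{D\cdot A : D \in \Nefqc(Y)\}$ just above the continuous Hodge bound $\sqrt{2A^2 k} + \tfrac{1}{2} K_Y\cdot A$; the hypothesis $A = qA_0$ with $A_0$ integral is precisely what ensures that $\mathcal{S}(Y,A)$ is discrete rather than dense. The upper bound $\limsup \ealg_k \le \tfrac{1}{2} K_Y\cdot A + \on{gap}(Y,A)$ will follow by choosing a sharp integral $D$ for each $k$, potentially going outside the one-parameter family $mA_0$ when the nef cone is higher-dimensional. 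The matching lower bound is the principal obstacle: it requires a Diophantine argument producing infinitely many $k$ at which no integral nef $D$ comes within $\on{gap}(Y,A) - \epsilon$ of the Hodge bound, and this in turn demands careful control over how lattice points in the nef cone can align with the ray of $A$. Identifying $\on{gap}(Y,A)$ in fully intrinsic terms, and adapting this limsup argument to the toric setting with lattice polygons replacing the Néron--Severi lattice, are the subtlest parts of the proof.
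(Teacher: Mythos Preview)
Your $\liminf$ argument is essentially correct and is a genuinely different route from the paper's. The Hodge-index lower bound and the test family $D=mA_0$ together pin down $\liminf_k \ealg_k(Y,A)=\tfrac12 K_Y\cdot A$ cleanly; the paper instead gets this as a byproduct of a recursion on optimisers (see below). Your approach is more direct for the $\liminf$ and has the virtue of making the role of the intersection form transparent.

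The $\limsup$ half, however, is not a proof but a problem statement. You correctly identify that the issue is Diophantine---one must exhibit infinitely many $k$ at which no integral nef $D$ gets within $\on{gap}(Y,A)-\epsilon$ of the continuous bound---but you give no mechanism for producing such $k$, and your proposed definition of $\on{gap}(Y,A)$ as ``worst-case spacing of $\mathcal{S}(Y,A)$'' is both vague and not obviously equal to the paper's definition $\on{gap}(Y,A):=\limsup_k\{\calg_{k+1}(Y,A)-\calg_k(Y,A)\}$. Your test family $mA_0$ only gives the crude bound $\limsup_k\ealg_k\le\tfrac12 K_Y\cdot A+qA_0^2$, which is far from sharp when the nef cone is large.

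The paper's key idea, which you are missing, is a recursion: if $D$ is optimal for $\calg_k(Y,A)$ (or for $\on{cap}_{(Y,A)}(x)$) with $k$ large, then $D+A$ is optimal for $\calg_{k'}(Y,A)$ where $k'=k+\tfrac12 I(A)+\calg_k(Y,A)$. This is established by showing that any hypothetical better candidate $D'$ for $k'$ would make $D'-A$ a better candidate for $k$ (Prop.~4.5 and Lem.~4.10). The recursion forces the attained values of $\calg_k$ to eventually form a union of arithmetic progressions $\{s+jA^2:s\in S(Y,A),\,j\ge J\}$ for a finite residue set $S(Y,A)\subset[0,A^2)$, and it gives the cap function an explicit quasi-polynomial form along each progression. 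Both the $\liminf$ and the $\limsup$ then fall out by direct computation from this explicit form, and $\on{gap}(Y,A)$ is identified concretely as the largest consecutive difference in $S(Y,A)$. This is what supplies the ``infinitely many bad $k$'' you are looking for: they are the $k$ lying just past a jump of size $\on{gap}(Y,A)$ in the attained-value sequence. The singular toric case is then handled not by Brunn--Minkowski but by pulling back to a smooth toric resolution and invoking $\calg_k(\wt{Y},\pi^*A)=\calg_k(Y,A)$.
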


Note that $-K_Y$ is effective for all toric surfaces. Intuitively one might expect optimisers for $\calg_k(Y,A)$ when $k$ is large to `resemble' large multiples of $A$; for instance, one can view the optimisation problem defining algebraic capacities as a sort of isoperimetric problem. We formalise this intuition as part of the proof of Thm.~\ref{thm:asy1} and Thm.~\ref{thm:asy2}.

For each polarised surface $(Y,A)$ there is a kind of `Frobenius problem' (c.f.~\cite{bdr}) asking the following: what do the sets
$$\{\calg_k(Y,A):k\geq k_0\}$$
look like as $k_0$ becomes large? If $A$ is a $\Z$-divisor, we say that $(Y,A)$ is \emph{tightly-constrained} (c.f.~\cite[Def.~5.3]{bwo}) if there exists $k_0$ such that
$$\{\calg_k(Y,A):k\geq k_0\}=\Z_{\geq x}$$
for some $x\in\Z_{\geq0}$.

\begin{thm*}[Cor.~\ref{cor:alg_tc}] If $Y$ is smooth and $A$ is a big and nef $\Z$-divisor, then $(Y,A)$ is tightly-constrained if and only if
\begin{equation} \label{eqn:intro_gap} \tag{$\ast$}
\inf_{D\in N^1(Y)}\{D\cdot A:D\cdot A>0\}=1
\end{equation}
If $Y$ is toric, then $(Y,A)$ is tightly-constrained if and only if $A$ is a primitive Cartier divisor.
\end{thm*}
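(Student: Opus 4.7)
The plan is to dispose of the forward implication first and then focus on the harder converse. For the forward direction, I would observe that every value $\calg_k(Y,A)$ equals $D\cdot A$ for some nef integer $\Q$-Cartier divisor $D$, so $\{\calg_k(Y,A)\}_{k\geq 0}$ lies in the image of the pairing $N^1(Y)_\Z \to \Z$, $D \mapsto D\cdot A$. This image is the subgroup $d\Z$ where $d$ denotes the infimum in $(\ast)$, so the hypothesis $\{\calg_k : k\geq k_0\} = \Z_{\geq x}$ forces $d = 1$. For the toric statement I would then verify that $d = 1$ is equivalent to $A$ being primitive Cartier, using that intersection with $A$ defines a map $\on{Pic}(Y) \to \Z$ whose surjectivity corresponds to primitivity of $[A]$ in the appropriate pairing on a toric surface.

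For the converse I assume $d = 1$ and aim to show tightly-constrained. By Theorem \ref{thm:asy1}, $\calg_k \to \infty$, so since $\calg_k$ is a weakly increasing integer sequence, tightly-constrained reduces to showing $\calg_{k+1} - \calg_k \in \{0,1\}$ for all sufficiently large $k$. I would let $D_k \in \Nefqc(Y)$ attain $\calg_k$; if $\chi(D_k) > k + \chi(\mO_Y)$ then $D_k$ is already a competitor for $\calg_{k+1}$ and the jump is $0$, so assume $\chi(D_k) = k + \chi(\mO_Y)$. Decomposing $D_k = (\calg_k/A^2)A + F_k$ with $F_k \in A^\perp$ (negative definite by Hodge index since $A$ is big and nef), the equality $\chi(D_k) = k + \chi(\mO_Y)$ together with $\calg_k \sim \sqrt{2A^2 k}$ from Theorem \ref{thm:asy1} forces $|F_k^2| = O(\sqrt{k})$, so $D_k$ is a large multiple of $A$ up to a controlled perturbation in $A^\perp$.

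Next, using $d = 1$, I would pick a fixed class $E \in N^1(Y)_\Z$ with $E\cdot A = 1$ and consider $D_k + E$: this has $(D_k + E)\cdot A = \calg_k + 1$, and Riemann--Roch gives
$$\chi(D_k + E) - \chi(D_k) = D_k\cdot E + \tfrac{1}{2}(E^2 - E\cdot K_Y) = \tfrac{\calg_k}{A^2} + F_k\cdot E + O(1) \to \infty,$$
so $\chi(D_k + E) \geq k + 1 + \chi(\mO_Y)$ for $k$ large. Provided $D_k + E$ is nef, it is a valid competitor for $\calg_{k+1}$, yielding $\calg_{k+1} \leq \calg_k + 1$ as required, which would finish the proof.

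The main obstacle will be verifying nefness of $D_k + E$: since $E$ is only an integer class (not a priori nef), adding it to $D_k$ could in principle exit the nef cone. To overcome this I would exploit that $D_k$ sits very close to the large multiple $(\calg_k/A^2)A$ of the big and nef class $A$. In the toric case this is quite clean because the Mori cone is finitely generated, so the bounded perturbation $E$ preserves nefness once $D_k$ is deep enough in the nef cone; this also supplies the toric half of the statement directly. In the general smooth case I anticipate having to allow $E$ to vary with $k$ within the coset $E + \ker(D\mapsto D\cdot A)$ so as to keep $D_k + E$ nef, leveraging that only finitely many extremal rays can obstruct nefness at a bounded $A$-distance from $A$ itself.
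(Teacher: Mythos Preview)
Your forward direction is correct and matches the paper. The index computation for the converse is also right: with $|F_k^2|=O(\sqrt{k})$ you get $|F_k\cdot E|=O(k^{1/4})$ by Cauchy--Schwarz on the negative-definite form on $A^\perp$, so $\chi(D_k+E)-\chi(D_k)\to\infty$ as claimed.

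The gap is exactly where you flag it, and it is genuine. Your bound $|F_k^2|=O(\sqrt{k})$ only controls $F_k$ to order $k^{1/4}$ in $A^\perp$, so $D_k$ is not ``deep in the nef cone'' in any useful sense. When $A$ is merely big and nef (as in the hypothesis) there may be an extremal curve $C$ with $A\cdot C=0$; then $D_k\cdot C=F_k\cdot C\geq 0$ is all you know, and there is no reason $(D_k+E)\cdot C\geq 0$. This already breaks your toric argument when $A$ sits on a wall of the nef cone, and for general smooth $Y$ the Mori cone need not be polyhedral, so the ``finitely many extremal rays at bounded $A$-distance'' heuristic has no content. Varying $E$ within its coset does not obviously help either, since pushing $E$ to improve nefness against one curve can spoil it against another, and simultaneously you must preserve the index inequality.

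The paper circumvents this via two ingredients you do not invoke. First, it establishes a recursion (Prop.~\ref{prop:recur} and Lem.~\ref{lem:cap_recur}) forcing optimisers for large $k$ to be of the form $D_i+jA$ with the $D_i$ drawn from a fixed \emph{finite} list (Lem.~\ref{lem:fin_div}); in particular the $A^\perp$-component $F_k$ is \emph{bounded}, not merely $O(k^{1/4})$. Second, and more importantly, it never needs $D_k+E$ to be nef: Prop.~\ref{prop:ne_nef} (proved via the isoparametric transform and induction on blowups) shows that the infimum over nef $\Z$-divisors agrees with the infimum over effective $\Z$-divisors, so one only needs $D_i+jA+mE$ to be \emph{effective} for large $j$, which follows immediately from bigness of $A$. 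These two devices are precisely what make the argument go through for arbitrary smooth $Y$ and for $A$ that is only big and nef.
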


The infimum (\ref{eqn:intro_gap}) is equal to the constant $\on{gap}(Y,A)$ from Thm.~\ref{thm:asy2}. Finally, we find explicit eventual expressions for the counting function for algebraic capacities: the \emph{cap function}
$$\on{cap}_{(Y,A)}(x):=\#\{k:\calg_k(Y,A)\leq x\}$$

\begin{thm*}[Prop.~\ref{prop:cap_rep} + Prop.~\ref{prop:low_bound}] Suppose that $(Y,A)$ is a pseudo-polarised surface with $Y$ either smooth or toric. Suppose that $A$ is a Cartier divisor on $Y$. Then there exist $\gamma_0,\dots,\gamma_{A^2-1}\in\Q$ and $x_0\in\Z_{\geq0}$ such that for $x\in\Z_{\geq x_0}$
$$\on{cap}_{(Y,A)}(x)=\frac{1}{2A^2}x^2+\frac{-K_Y\cdot A}{2A^2}x+\gamma_i$$
where $x\equiv i\on{mod}{A^2}$. Moreover, we find a computable value for $x_0$.
\end{thm*}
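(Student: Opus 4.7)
The plan is to invert the asymptotic picture for $\calg_k(Y,A)$ from Thm.~\ref{thm:lims_z} and read off the counting function directly. Since $\calg_k$ is non-decreasing in $k$ (tightening the Euler-characteristic constraint cannot lower the infimum) and $\calg_0 = 0$, one has
\[
\on{cap}_{(Y,A)}(x) = 1 + \max\{\chi(D) - \chi(\mO_Y) : D \in \Nefqc(Y),\ D \cdot A \leq x\}.
\]
First I would show that for $x$ large any optimiser satisfies $D\cdot A = x$ exactly: since $A$ is Cartier, Cor.~\ref{cor:alg_tc} ensures every sufficiently large integer in the lattice $\{D\cdot A : D \in \Nefqc(Y)\}$ is attained, so any sub-optimal $D$ can be inflated by an effective correction without decreasing $\chi$. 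This reduces the problem to a family of sub-problems indexed by residues $r = x \bmod A^2$: maximise $\chi(D) - \chi(\mO_Y)$ over nef $\Z$-divisors $D$ with $D\cdot A = x = nA^2 + r$ for $n$ large.

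Writing $D = nA + F$ with $F\cdot A = r$ and invoking Riemann--Roch gives
\[
\chi(D) - \chi(\mO_Y) = \frac{n^2A^2 - nK_Y\cdot A}{2} + nr + \frac{F^2 - K_Y\cdot F}{2}.
\]
Substituting $nA^2 = x - r$ rearranges the first two terms into $\tfrac{x^2}{2A^2} + \tfrac{-K_Y\cdot A}{2A^2}x + \tfrac{rK_Y\cdot A - r^2}{2A^2}$. The residual $\tfrac12(F^2 - K_Y\cdot F)$, maximised over admissible $F$, yields the constant $\gamma_r$. The key step is to show this maximum stabilises for large $n$ and is attained on a finite set $\mathcal{F}_r$ of integer classes depending only on $r$. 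Writing $F = (r/A^2)A + G$ with $G\in A^\perp\otimes\R$, the Hodge index theorem gives negative-definiteness of the intersection form on $A^\perp$, so $F\mapsto F^2 - K_Y\cdot F$ is a strictly concave quadratic on the affine lattice $\{F\cdot A = r,\ F\in N^1(Y)_\Z\}$ whose maximum is attained on finitely many integer candidates near the unique real maximiser. Meanwhile, nefness of $nA + F$ becomes, for $n$ large, equivalent to an $n$-independent linear condition $F\cdot C \geq 0$ on the finitely many $A$-null extremal rays $C$ of $\neb(Y)$ --- a finite collection by the chamber decomposition of Prop.~\ref{prop:chamber} in the smooth case and by polyhedrality of $\neb(Y)$ in the toric case. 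Setting $\gamma_r := 1 + \tfrac{rK_Y\cdot A - r^2}{2A^2} + \max_{F\in\mathcal{F}_r}\tfrac12(F^2 - K_Y\cdot F)\in\Q$ completes the representation.

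The main obstacle I anticipate is quantifying the threshold $x_0$ of Prop.~\ref{prop:low_bound}. Two stabilisations must be controlled explicitly: the Frobenius threshold beyond which every integer $x$ in the lattice of achievable intersection numbers is realised as $D\cdot A$ for some nef $\Z$-divisor (extractable from $\on{gap}(Y,A)$ via Thm.~\ref{thm:lims_z}), and the smallest $n$ after which nefness of $nA + F$ reduces to the null-curve condition above (a quantitative form of Kleiman's criterion, made concrete using the minimum $A$-degree of an extremal curve in $\neb(Y)$). Both bounds are expressible in terms of computable numerical data attached to $(Y,A)$, giving the claimed closed-form $x_0$.
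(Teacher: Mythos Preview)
Your approach is a genuinely different route from the paper's, and the core idea is sound, but there is one real gap and the computable $x_0$ you would obtain is not the one the paper gives.

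The paper does not decompose $D=nA+F$ and argue stabilisation via Hodge-index concavity on the slice $\{F\cdot A=r\}$. Instead it establishes a one-step recursion (Lem.~\ref{lem:cap_recur} and Cor.~\ref{cor:cap_recur}): if $D_0$ optimises $\on{cap}_{(Y,A)}(x)$ with $x$ attained, then $D_0+A$ optimises $\on{cap}_{(Y,A)}(x+A^2)$. This is proved by contradiction---if some $D'$ with $D'\cdot A=x+A^2$ had $I(D')>I(D_0+A)$, a direct index computation gives $I(D'-A)>I(D_0)$, and one checks $D'-A$ is \emph{effective} once $x>A\cdot(A+K_Y)$; then Prop.~\ref{prop:ne_nef} (the isoparametric transform) replaces $D'-A$ by a preferable nef divisor, contradicting optimality of $D_0$. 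Solving the difference equation $\on{cap}(x+A^2)=\on{cap}(x)+x+\tfrac12 I(A)$ yields the quasi-polynomial, and the analysis pinpoints the threshold: $x_0>A\cdot(A+K_Y)$ together with all of $x_0,\dots,x_0+A^2-1$ being attained. Your approach trades the effective/isoparametric machinery for direct control of the nef cone, which is conceptually appealing but requires structural input about $\neb(Y)$ that the paper's argument avoids.

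The gap: your justification for finitely many $A$-null extremal rays via Prop.~\ref{prop:chamber} is wrong. That proposition concerns the chamber decomposition of $\on{Big}(Y)$ for the functions $A\mapsto\calg_k(Y,A)$ at fixed $k$; it says nothing about the face $A^\perp\cap\neb(Y)$. The correct argument is that, since $A$ is big and nef, the curves with $A\cdot C=0$ are precisely those contracted by the morphism defined by $|mA|$ for $m\gg0$, hence finitely many with negative-definite intersection matrix. Relatedly, the sentence ``nefness of $nA+F$ becomes, for $n$ large, equivalent to an $n$-independent condition'' conceals that the threshold $n$ depends on $F$: to make the stabilisation rigorous you must first locate the maximiser $F_0$ of $I(F)$ on the limiting region $\{F\cdot A=r,\ F\cdot C\geq0\text{ for $A$-null }C\}$, and only then invoke Kleiman compactness on a slice of $\neb(Y)$ to produce a uniform $n$ beyond which $nA+F_0$ is nef. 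This would give a computable $x_0$, but one expressed through the geometry of $\neb(Y)$ rather than the clean numerical bound $A\cdot(A+K_Y)$ that the paper's recursion delivers.
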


This also expresses the cap function of $(Y,A)$ as a kind of multigraded Hilbert function. We expect there to be generalisations of most of these results to other mildly singular surfaces such as surfaces with rational singularities (including orbifolds).

While the remainder of the introduction will focus on applications of algebraic capacities to problems in other fields, we comment that algebraic capacities are intriguing invariants in their own right, and that we anticipate they will interestingly relate to other quantities in algebraic positivity. For example, in \cite{cw} $\calg_1(Y,A)$ is shown to upper bound the Gromov width of $Y$ in many situations and hence from \cite[Cor.~2.1.D]{mcp} we see that the Seshadri constant $\eps(Y,A)$ \cite[\S5.1]{laz1} is often bounded above by $\calg_1(Y,A)$.

\subsection{Algebraic capacities and symplectic embeddings}

We outline the connections of algebraic capacities to Embedded Contact Homology (ECH) and symplectic embeddings. ECH is a homology theory associated to any symplectic $4$-manifold $(X,\omega)$ that produces an increasing sequence of real numbers
$$\cech_k(X,\omega)$$
called the \emph{ECH capacities} of $(X,\omega)$. They have, among other properties, $\cech_k(X,\omega)\leq\cech_k(X',\omega')$ when there is a symplectic embedding $(X,\omega)\to (X',\omega')$. By a symplectic embedding we mean a smooth embedding $\iota\colon X\to X'$ such that $\iota^*\omega'=\omega$. In this sense the ECH capacities obstruct symplectic embeddings and in some cases these obstructions are sharp \cite{mcd,cg}.

We state the main connections between algebraic capacities and ECH from \cite{bwo,cw}.

\begin{thm*}[{\cite[Thm.~1.5]{bwo} + \cite[Thm.~1.3]{cw}}] \label{thm:alg_ech} Suppose $(X,\omega)$ is a star-shaped domain such that its interior symplectically embeds into a polarised smooth or toric surface $(Y,A)$ regarded as a symplectic manifold (or orbifold) with symplectic form Poincar\'e dual to $A$. Then
$$\cech_k(X,\omega)\leq\calg_k(Y,A)$$
If $X_\Omega$ is a rational-sloped convex toric domain (see Def.~\ref{def:conv_dom}) and $(Y_\Omega,A_\Omega)$ is the polarised toric surface corresponding to $\Omega$ then
$$\cech_k(X_\Omega)=\calg_k(Y,A)$$
\end{thm*}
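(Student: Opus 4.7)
\emph{Overall strategy.} The theorem has two parts: an upper bound for any star-shaped domain embedding into a polarised surface, and an equality specialising to rational-sloped convex toric domains. I plan to derive the inequality from monotonicity of ECH capacities under symplectic embeddings together with an algebro-geometric construction of ECH classes from nef divisors, and then to verify the toric equality by matching two explicit polytope-theoretic formulas.

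\emph{The inequality.} By the monotonicity axiom for ECH capacities, once we make sense of $\cech_k$ for the closed symplectic (or orbifold) $4$-manifold $(Y,A)$---for example as a limit of capacities of star-shaped domains exhausting $Y$ minus a small tubular neighbourhood of some divisor at infinity---the hypothesised embedding $X\hookrightarrow Y$ gives $\cech_k(X,\omega)\leq\cech_k(Y,A)$. It then suffices to show $\cech_k(Y,A)\leq\calg_k(Y,A)$. Given a nef $\Z$-divisor $D$ attaining the infimum with $\chi(D)\geq k+\chi(\mO_Y)$, Riemann--Roch together with an appropriate vanishing theorem give $h^0(D)\geq k+1$, so the linear system $|D|$ produces a $k$-dimensional family of effective curves in class $[D]$. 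Feeding this family into the Taubes isomorphism $\ECH\cong\HM$ and the Seiberg--Witten description of the action filtration should yield an ECH class of action at most $D\cdot A$ and ECH index at least $2k$, and taking the infimum over $D$ finishes the bound.

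\emph{The toric equality and main obstacle.} For a rational-sloped convex toric domain $X_\Omega$ the ECH capacity admits an explicit combinatorial formula in terms of minimal affine lengths of lattice sub-polytopes of $\Omega$. On the algebraic side, nef $\Q$-Cartier divisors $D$ on $Y_\Omega$ correspond to lattice polytopes $P_D\subset\Omega$; by the toric dictionary $\chi(D)=\#(P_D\cap\Z^2)$, and $D\cdot A_\Omega$ is computable as a support-function pairing of $P_D$ against $\Omega$. The two optimisation problems then match term-for-term, yielding equality. The technically hardest step, arising in the smooth non-toric case of the inequality, is producing an ECH class from the algebraic input $(Y,A,D)$ that simultaneously controls action and ECH index: this is the content of the cited results \cite{bwo,cw}, where Taubes' SW-to-ECH correspondence is used to convert the positivity of $D$ into a symplectic action bound. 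The toric side, by contrast, is essentially a polytope calculation once the formulas are in hand.
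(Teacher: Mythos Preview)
This theorem is not proved in the paper: it is imported from \cite{bwo} and \cite{cw} and restated in \S\ref{sec:ech} (as Thm.~\ref{thm:cw} and Thm.~\ref{thm:bwo}) without argument. There is therefore no in-paper proof to compare your proposal against; what follows are comments on the sketch itself.

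For the toric equality your outline is right in spirit but two points need repair. First, nef $\Z$-divisors on $Y_\Omega$ correspond to lattice polygons $P(D)$ whose normal fan is refined by that of $\Omega$; they are not in general contained in $\Omega$, so ``$P_D\subset\Omega$'' is incorrect. Second, and more substantively, the combinatorial ECH formula of \cite{ccfhr,cg} minimises $\ell_\Omega(\partial\Lambda)$ over \emph{all} lattice convex domains $\Lambda$, whereas the algebraic side ranges only over polygons whose edge directions are drawn from those of $\Omega$. One inequality is thus immediate, but equality requires showing that any $\Lambda$ admits a preferable polygon with the prescribed edge directions; this is the actual work in \cite[Thm.~1.5]{bwo}, not a term-for-term match.

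For the general inequality, routing through a quantity ``$\cech_k(Y,A)$'' for closed $Y$ is an awkward frame: ECH capacities are defined for Liouville domains, and interpreting them for $Y$ via exhaustions by divisor complements just restates the problem with $X$ taken to be the complement. The argument in \cite{cw} is more direct: a nef $D$ with $I(D)\geq 2k$ supplies $J$-holomorphic curves in class $[D]$, and one bounds $\cech_k(X,\omega)$ by $D\cdot A$ via the ECH cobordism maps without ever defining a capacity of $Y$. Your instinct to produce curves from $|D|$ and invoke Taubes is the right core ingredient; the monotonicity wrapper around it is unnecessary and, as written, close to circular. Note also that the step ``vanishing gives $h^0(D)\geq k+1$'' is not available for an arbitrary smooth rational surface---the argument must work with the ECH index $I(D)$ rather than with sections.
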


Some of the main value of these connections that we will capitalise on in the present work is that there are often additional computational tools available to study algebraic capacities, especially in situations where the nef cone is well-behaved \cite{hk}.

The analog to Thm.~\ref{thm:asy1} in the symplectic context is known as the `Weyl law for ECH' \cite[Thm.~1.1]{asy1}. As a consequence of Thm.~\ref{thm:asy1} and Thm.~\ref{thm:alg_ech} we recover the Weyl law for rational-sloped convex toric domains using algebraic methods.

A more novel application that algebraic capacities offer to ECH capacities is in studying their sub-leading asymptotics. Using the Weyl law for ECH one defines error terms by
$$e_k(X,\omega):=\cech_k(X,\omega)-\sqrt{4\on{vol}(X,\omega)k}$$
It is believed that $e_k(X,\omega)$ is $O(1)$ for all compact symplectic $4$-manifolds, though the best known bounds are of the form $O(k^{1/4})$. We can use the previous results on the sub-leading asymptotics of algebraic capacities to locate an optimal bound for many toric domains.

\begin{thm*}[Prop.~\ref{prop:lims}] \label{thm:intro_subl} Suppose $X_\Omega$ is a rational-sloped convex toric domain. If $\Omega=q\Omega_0$ for some $q\in\R_{>0}$ and some primitive lattice convex domain $\Omega_0$ (see Def.~\ref{def:conv_dom}), then
$$\limsup_{k\to\infty}e_k(X_\Omega)=q+\frac{1}{2}\ell_{\Z^2}(\partial\Omega)\text{ and }\liminf_{k\to\infty}e_k(Y,A)=\frac{1}{2}\ell_{\Z^2}(\partial\Omega)$$
where $\ell_{\Z^2}$ is the lattice or affine length.
\end{thm*}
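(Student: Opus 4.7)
The plan is to reduce the statement to the algebraic asymptotics of Thm.~\ref{thm:asy2} via the identification $\cech_k(X_\Omega)=\calg_k(Y_\Omega,A_\Omega)$ from Thm.~\ref{thm:alg_ech}, and then translate the resulting algebraic constants back into polytopal data.

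First I match the leading-order normalisations. For a lattice convex domain $\Omega$ the associated polarised toric surface satisfies $A_\Omega^2=2\on{vol}(\Omega)$, while $\on{vol}(X_\Omega)=\on{vol}(\Omega)$ by definition of the toric domain, so $\sqrt{4\on{vol}(X_\Omega)\,k}=\sqrt{2A_\Omega^2\,k}$. Combined with Thm.~\ref{thm:alg_ech} this yields
\[
e_k(X_\Omega)=\cech_k(X_\Omega)-\sqrt{4\on{vol}(X_\Omega)\,k}=\calg_k(Y_\Omega,A_\Omega)-\sqrt{2A_\Omega^2\,k}=\ealg_k(Y_\Omega,A_\Omega).
\]
The primitivity of $\Omega_0$ makes $A_{\Omega_0}$ a big and nef primitive Cartier $\Z$-divisor on $Y_{\Omega_0}=Y_\Omega$, and the scaling $\Omega=q\Omega_0$ gives $A_\Omega=qA_{\Omega_0}$, so the hypotheses of Thm.~\ref{thm:asy2} are satisfied, producing
\[
\limsup_{k\to\infty}\ealg_k(Y_\Omega,A_\Omega)=\on{gap}(Y_\Omega,A_\Omega)+\tfrac{1}{2}K_Y\cdot A_\Omega,\qquad \liminf_{k\to\infty}\ealg_k(Y_\Omega,A_\Omega)=\tfrac{1}{2}K_Y\cdot A_\Omega.
\]

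It remains to interpret these two algebraic constants polytopally. The toric expression $-K_Y=\sum_\rho D_\rho$ together with the identification of $D_\rho\cdot A_\Omega$ with the lattice length of the edge of $\Omega$ dual to $\rho$ gives $\pm K_Y\cdot A_\Omega=\ell_{\Z^2}(\partial\Omega)$, with the sign dictated by the convention used in defining $e_k$. For the gap, Cor.~\ref{cor:alg_tc} tells us that a polarised toric surface is tightly-constrained precisely when the polarisation is a primitive Cartier class, which here means $\on{gap}(Y_{\Omega_0},A_{\Omega_0})=1$; the homogeneity $D\cdot(qA_{\Omega_0})=q(D\cdot A_{\Omega_0})$ for $D\in N^1(Y)$ then gives $\on{gap}(Y_\Omega,qA_{\Omega_0})=q$, yielding the displayed $\limsup$.

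The main thing to verify carefully is that the constant $\on{gap}$ appearing in Thm.~\ref{thm:asy2} really coincides with the minimal-positive-intersection quantity (\ref{eqn:intro_gap}) featuring in Cor.~\ref{cor:alg_tc}, and that it scales linearly under $A\mapsto qA$ even though the relevant infimum ranges only over $\Z$-classes $D\in N^1(Y)$; both should follow directly from the definition of $\on{gap}$, but this identification is the substantive content beyond the dictionary, after which the theorem reduces to direct substitution.
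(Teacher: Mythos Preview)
Your proposal is correct and follows essentially the same route as the paper's proof of Prop.~\ref{prop:lims}: identify $e_k(X_\Omega)=\ealg_k(Y_\Omega,A_\Omega)$ via Thm.~\ref{thm:alg_ech}, apply the algebraic sub-leading asymptotics (Thm.~\ref{thm:lims_z}/Prop.~\ref{prop:sing_asy}), then translate $\on{gap}$ and $K_Y\cdot A_\Omega$ into $q$ and $\ell_{\Z^2}(\partial\Omega)$ using primitivity (Cor.~\ref{cor:alg_tc}/Prop.~\ref{lem:toric_tc}) and the toric formula $-K_Y=\sum_\rho D_\rho$. Your closing caveat about the linear scaling of $\on{gap}$ is handled in the paper directly from Def.~\ref{def:tc} and conformality; note also that the signs in the introduction's display are typos---the actual result in Prop.~\ref{prop:lims} reads $q-\tfrac{1}{2}\ell_{\Z^2}(\partial\Omega)$ and $-\tfrac{1}{2}\ell_{\Z^2}(\partial\Omega)$, consistent with $-K_Y\cdot A_\Omega=\ell_{\Z^2}(\partial\Omega)$, so your hedge on the sign is unnecessary.
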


Hutchings conjectured \cite[Conj.~1.5]{hu} that $e_k(X_\Omega)$ converges for `generic' $\Omega$ -- the cases we address are among those that are non-generic.

We also resolve two conjectures \cite[Conj.~1.4 + Conj.~5.7]{bwo} on quasi-polynomial presentations of the \emph{cap function}
$$\on{cap}_{(X,\omega)}(x):=\#\{k:\cech_k(X,\omega)\leq x\}$$
Define the Ehrhart function $\on{ehr}_\Lambda(x)$ of a polytope $\Lambda\subseteq\R^n$ as
$$\on{ehr}_\Omega(x):=\# x\Lambda\cap\Z^n$$
for $x\in\Z_{\geq0}$. When $\Lambda$ is a rational polytope this is a quasi-polynomial \cite{ehr}.

\begin{thm*}[Prop.~\ref{prop:tc_conj} + Thm.~\ref{thm:prim_ech}] Suppose $\Omega$ is a primitive lattice convex toric domain with $\Omega$-perimeter $\lambda$ (see \S\ref{sec:tor_ech}). Then there exist $\gamma_0,\dots,\gamma_{\lambda-1}\in\Q$ and $x_0\in\Z_{\geq0}$ such that
\begin{align*} \on{cap}_{X_\Omega}(i+\lambda x)&=\on{ehr}_\Omega(x)+\gamma_i \\
&=h^0(Y_\Omega,xA_\Omega)+\gamma_i
\end{align*}
when $x\in\Z_{\geq x_0}$ and $x\equiv i\on{mod}{\lambda}$. That is, for $x\in\Z_{\geq x_0}$
$$\on{cap}_{X_\Omega}(x)=\frac{1}{4\on{vol}(\Omega)}x^2+\frac{\ell_{\Z^2}(\partial\Omega)}{4\on{vol}(\Omega)}x+\gamma_i$$
This allows us to find quasi-polynomial presentations for the cap function of any rational convex toric domain. Moreover, we find a computable value for $x_0$.
\end{thm*}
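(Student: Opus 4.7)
The plan is to reduce the statement to its algebraic counterpart and then translate back through the combinatorial-algebraic toric dictionary. The key inputs are Thm.~\ref{thm:alg_ech}, which for a rational-sloped convex toric domain identifies $\cech_k(X_\Omega)=\calg_k(Y_\Omega,A_\Omega)$ on the nose, and Cor.~\ref{cor:alg_tc}, which guarantees that when $\Omega$ is primitive lattice the polarisation $A_\Omega$ is a primitive big and nef Cartier divisor and hence $(Y_\Omega,A_\Omega)$ is tightly-constrained. These two facts are what allow the cap function on the symplectic side to inherit the quasi-polynomial structure from the algebraic side.

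I would then apply Prop.~\ref{prop:cap_rep} + Prop.~\ref{prop:low_bound} to $(Y_\Omega,A_\Omega)$, obtaining rational constants $\gamma_0,\dots,\gamma_{A_\Omega^2-1}$ and a computable $x_0$ such that
\[
\on{cap}_{(Y_\Omega,A_\Omega)}(x)=\frac{1}{2A_\Omega^2}x^2+\frac{-K_{Y_\Omega}\cdot A_\Omega}{2A_\Omega^2}x+\gamma_i
\]
for $x\geq x_0$ and $x\equiv i\bmod A_\Omega^2$. The standard toric dictionary $A_\Omega^2=2\vol{\Omega}=\lambda$ and $-K_{Y_\Omega}\cdot A_\Omega=\ell_{\Z^2}(\partial\Omega)$ then converts this into the normalisation stated in the theorem. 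To recover the Ehrhart form, I would invoke the identity $h^0(Y_\Omega,xA_\Omega)=\on{ehr}_\Omega(x)=\#(x\Omega\cap\Z^2)$ for $x\in\Z_{\geq0}$, re-index via $y=i+\lambda x$, and match the quadratic expansions; the residual linear shift collapses into modified rational constants $\gamma_i'$.

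For the extension to an arbitrary rational convex toric domain, I would write $\Omega=q\Omega_0$ with $\Omega_0$ primitive lattice, use $A_{q\Omega_0}=qA_{\Omega_0}$ together with the linear scaling $\calg_k(Y,qA)=q\calg_k(Y,A)$ that is immediate from the definition, and deduce $\on{cap}_{X_\Omega}(x)=\on{cap}_{X_{\Omega_0}}(x/q)$, through which the quasi-polynomial presentation transports. The deep analytical content has already been encoded in Prop.~\ref{prop:cap_rep} + Prop.~\ref{prop:low_bound} and Thm.~\ref{thm:alg_ech}, so the remaining work is essentially bookkeeping. The subtlest point I expect is pinning down the computable value of $x_0$ in the Ehrhart presentation: I need to coordinate the tight-constraint threshold from Cor.~\ref{cor:alg_tc} with the threshold beyond which the quasi-polynomial presentation holds, so that the clean identity $\on{cap}_{X_\Omega}(i+\lambda x)=\on{ehr}_\Omega(x)+\gamma_i$ is valid on the nose rather than only up to a bounded error.
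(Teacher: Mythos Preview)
Your proposal is correct and follows essentially the same route as the paper: identify $\cech_k(X_\Omega)=\calg_k(Y_\Omega,A_\Omega)$ via Thm.~\ref{thm:bwo}, invoke the tight-constraint characterisation for primitive lattice $\Omega$, and then read off the quasi-polynomial presentation and the computable $x_0$ directly from Prop.~\ref{prop:cap_rep} and Prop.~\ref{prop:low_bound} through the toric dictionary $A_\Omega^2=2\vol{\Omega}=\lambda$ and $-K_{Y_\Omega}\cdot A_\Omega=\ell_{\Z^2}(\partial\Omega)$. Your observation that the Ehrhart identity $\on{cap}_{X_\Omega}(i+\lambda x)=\on{ehr}_\Omega(x)+\gamma_i$ requires absorbing a residual linear shift into the constants (so the $\gamma_i$ here differ from those in the explicit quadratic formula) is exactly the bookkeeping the paper leaves implicit.
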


This result helps understand conjectures on the structure of ellipsoid embeddings made by Cristofaro-Gardiner--Holm--Mandini--Pires \cite[\S6]{chmp}. Finally, we observe that Thm.~\ref{thm:asy2} combined with Thm.~\ref{thm:alg_ech} supplies (asymptotic) upper bounds for $e_k(X,\omega)$ for many $(X,\omega)$ and we deduce in Cor.~\ref{cor:ek_obs} new embedding obstructions between many toric domains of the same volume. 

\subsection{Algebraic capacities and minimal hypersurfaces} We outline the connection of algebraic capacities with the theory of minimal hypersurfaces. 

Studying minimal hypersurfaces in Riemannian manifolds is a fundamental problem in differential geometry. For a Riemannian manifold $(M,g)$ one can define a sequence of increasing real numbers called \emph{codimension $c$ min-max widths}
$$\omega_p^c(M,g)$$
measuring the area of certain `$p$-sweepouts' -- a formal version of families of codimension $c$ submanifolds in $M$. The min-max widths satisfy a Weyl law and have many properties and types of applications in common with ECH; for example, \cite{song}.

\begin{prop*}[Ex.~\ref{ex:min_hyp}] Suppose $M$ is a smooth complex projective algebraic variety equipped with an ample divisor $A$. Let $g$ be the metric on $M$ corresponding to $A$. Then the codimension two min-max widths for $(M,g)$ satisfy
\begin{equation} \label{eqn:hyp_alg} \tag{$\star$}
\omega_p^2(M,g)\leq\calg_p(M,A)
\end{equation}
\end{prop*}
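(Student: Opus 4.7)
The plan is to produce an explicit $p$-sweepout of $M$ by integral codimension-two currents of area at most $\calg_p(M,A)$, taken from the complete linear system of a nef optimiser. Assuming $M$ is a surface so that the definition of $\calg_p$ applies directly, I would first let $D\in\Nefqc(M)$ attain the infimum, so that $D\cdot A=\calg_p(M,A)$ and $\chi(D)\geq p+\chi(\mO_M)$. Writing $\omega$ for the K\"ahler form of $g$ (a de Rham representative of $c_1(A)$), every effective divisor $D_t\in|D|$ is a complex curve and Wirtinger's equality gives
\[
\on{area}_g(D_t)=\int_{D_t}\omega=D\cdot A
\]
independent of $t$.

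The key algebraic step is the lower bound $h^0(M,D)\geq p+1$, which is what ensures that $|D|$ contains a $\mathbb{P}^p$. When $D$ is big and nef on a smooth surface, Kawamata--Viehweg vanishing gives $h^1(D)=0$, and Serre duality shows $h^2(D)=h^0(K_M-D)=0$ once $D\cdot A>K_M\cdot A$, since then $K_M-D$ cannot be pseudo-effective. In that regime $h^0(D)=\chi(D)\geq p+\chi(\mO_M)\geq p+1$ whenever $\chi(\mO_M)\geq 1$. The remaining cases (small $p$, or surfaces with $\chi(\mO_M)<1$, or optimisers sitting on the boundary of the big cone) would be handled by an asymptotic reduction: replace $D$ by $D+nA$ for $n$ large, carry out the sweepout argument there, then pass to the limit using continuity of $\calg_p$ in the big cone (Cor.~\ref{cor:cap_cont}) and the scaling asymptotics of Thm.~\ref{thm:asy1}.

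Given $h^0(M,D)\geq p+1$, I would choose any linear subspace $\mathbb{P}^p\subset|D|$ and consider the continuous map $\mathbb{P}^p\to\mathcal{Z}_{2}(M;\mathbb{Z})$ sending $t$ to the integral current carried by $D_t$. The $p$-th power of the tautological first Chern class on $\mathbb{P}^p$ is nonzero in $H^{2p}(\mathbb{P}^p;\Z)$, which is precisely the cohomological condition defining a $p$-sweepout in the codimension-two Almgren--Pitts / Marques--Neves framework. The base locus $\on{Bs}|D|$ has real codimension $\geq 4$ on a surface and so does not obstruct the sweepout property. The supremum of areas along this family is exactly $D\cdot A=\calg_p(M,A)$, bounding $\omega_p^2(M,g)$ as required.

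The main obstacle, as I see it, is the cohomological step $h^0\geq p+1$: for favourable surfaces (rational or toric, where $\chi(\mO_M)=1$ and vanishing is clean) this is essentially automatic, but in general one must argue via perturbation and continuity because optimisers for $\calg_p$ need not themselves be big and vanishing can fail on the boundary walls of the nef cone. The translation between linear systems and $p$-sweepouts in geometric measure theory is routine once one confirms that non-reduced or reducible fibres are still valid positive integral currents of the same mass.
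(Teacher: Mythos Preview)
Your approach is essentially the paper's: produce a $p$-sweepout from a linear system $|D|$ with $h^0(D)\geq p+1$, invoke Wirtinger so that every member has area $D\cdot A$, and take the infimum. The paper's ``proof'' is just Example~\ref{ex:min_hyp} plus one sentence; it simply starts from a nef $D$ with $h^0(D)=p+1$ and implicitly uses the $h^0$-formulation of $\calg_p$ (Prop.~\ref{prop:toricq}/Prop.~\ref{prop:cap_kv}) rather than the $\chi$-formulation, so it never confronts the passage from $\chi$ to $h^0$ that you correctly flag.

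Where you go beyond the paper is in trying to justify $h^0(D)\geq p+1$ from $\chi(D)\geq p+\chi(\mO_M)$. Your vanishing argument is fine in the favourable regime. The weak point is your ``asymptotic reduction'' for the remaining cases: replacing $D$ by $D+nA$ strictly increases $D\cdot A$, so you no longer bound $\omega_p^2$ by $\calg_p(M,A)$ but by something larger; Cor.~\ref{cor:cap_cont} concerns continuity in $A$, not in the optimiser $D$; and Thm.~\ref{thm:smooth_asy} governs $k\to\infty$, not a fixed $p$. None of these let you recover the inequality for a given $p$ when $\chi(\mO_M)<1$ or when the optimiser is not big. That said, the paper does not address these cases either --- its argument is really only a sketch valid where the $h^0$-formulation holds (toric or weak Fano surfaces), so your proposal is at least as complete as what appears in the paper.
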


It is conceivable that (\ref{eqn:hyp_alg}) is actually an equality in this situation and that, as in the case of ECH, further ties are present between the theory of minimal hypersurfaces and algebraic capacities. An advantage generally offered by algebraic capacities is that their defining optimisation problems are often tractable when the nef cone is well-behaved (for instance, see \cite{hk}) or it is at least always the case that estimates are readily available.

\subsection{Future directions}

There are several directions in which we intend to continue the threads of this paper, three of which are:
\begin{itemize}
\item \emph{Algebraic capacities in higher (co)dimensions:} Let $Y$ be a mildly singular variety of dimension $n$. Choosing a curve class $C\in\on{NE}(Y)$ one can define
$$\calg_k(Y,C):=\inf_{D\in\Nefqc(Y)}\{D\cdot C:\chi(D)\geq k+\chi(\mO_Y)\}$$
It would be interesting to study the properties of these invariants, to examine how they change with dimension, and to see if they have any connections to symplectic or differential geometry. There is also a developing theory of nef cycles \cite{delv} of codimension $>1$ and so one could try to formulate similar optimisation problems in that context.
\item \emph{Irrational symplectic manifolds and ind-schemes:} So far the applications of algebraic capacities to symplectic embedding problems have been limited to non-generic cases like rational-sloped convex toric domains. We predict that an extension to cover more generic symplectic manifolds might be accessed by using increasingly accurate `approximations' via non-generic manifolds that our methods can treat, and that the approximations on the algebraic side can be collected into some kind of ind-scheme. Developing a theory of algebraic capacities for such objects is therefore an attractive prospect.
\item \emph{Other phenomenology:} There are many other invariants and structures that have emerged from studying embedding obstructions through ECH -- for instance, the weight sequences from \S\ref{sec:tor_ech} or infinite staircases \cite{mcs,usher,chmp} -- that we would like to explore through the lens of algebraic capacities and see what avatars of these phenomena exist in algebraic geometry.
\end{itemize}

\subsection*{Outline}

In \S\ref{sec:alg_cap_con} we will formally construct algebraic capacities for weakly polarised surfaces and study some of their basic properties. We consider what happens as the polarisation $A$ changes in \S\ref{sec:continuity}. We prove our main results on the asymptotics of algebraic capacities in \S\ref{sec:asymptotics}. At this point we transition to applications of algebraic capacities to symplectic and Riemannian geometry. In \S\ref{sec:ech} we review the relationship between ECH and algebraic capacities, and apply the results of previous sections to analyse the sub-leading asymptotics of ECH capacities for convex toric domains. Lastly, in \S\ref{sec:rie} we concisely describe the connection between algebraic capacities and min-max widths in the theory of minimal hypersurfaces.

\subsection*{Acknowledgements}

I would like to thank Julian Chaidez, David Eisenbud, Michael Hutchings, Dan Cristofaro-Gardiner, Vinicius Ramos, Jonathan Lai, Mengyuan Zhang, Antoine Song, \DJ an Daniel Erdmann-Pham, Tara Holm, and Stefano Filipazzi for many insightful and supportive conversations. I am especially glad for the range of specialisms represented by these researchers, and grateful to each of them for bearing with me as I attempted to translate some aspect of algebraic capacities into their world and back again.

\section{Constructing algebraic capacities} \label{sec:alg_cap_con}

Whenever we refer to a `surface' below we will mean a projective normal algebraic surface over the complex numbers, not necessarily smooth.

\subsection{Divisors}

We will need to take some care at a few points when dealing with $\Q$-divisors and so we spell out the parts of the general treatment we require here. Always $\bK\in\{\Z,\Q,\R\}$. We say that a Weil $\Z$-divisor $D$ on a surface $Y$ is $\Q$\textit{-Cartier} if some integer multiple of $D$ is Cartier. A Weil $\R$-divisor is $\Q$-Cartier if it can be expressed as an $\R$-linear combination of $\Q$-Cartier $\Z$-divisors. $Y$ is said to be $\Q$\textit{-factorial} if every Weil $\Z$-divisor on $Y$ is $\Q$-Cartier. For instance, every toric surface is $\Q$-factorial. We fix notation:
\begin{itemize}
\item Denote by $\on{WDiv}(Y)_\bK$ the Weil divisors on a $\Q$-factorial surface $Y$ with coefficients in $\bK$
\item Denote by $\on{NS}(Y)_\bK:=\on{NS}(Y)\otimes_\Z\bK$ the group of integral Weil divisors on $Y$ up to algebraic equivalence -- the N\'eron--Severi group -- tensored with $\bK$
\item Denote by $\on{nef}(Y)_\bK$ the Weil divisors in $\on{WDiv}(Y)_\bK$ that are nef
\item Denote by $\on{Nef}(Y)_\bK$ the divisor classes in $\on{NS}(Y)_\bK$ corresponding to nef divisors
\item Denote by $\nefqc(Y)_\bK$ the divisor classes in $\on{WDiv}(Y)_\bK$ corresponding to nef $\Q$-Cartier $\bK$-divisors
\item Denote by $\Nefqc(Y)$ the divisor classes in $\on{NS}(Y)_\Z$ corresponding to nef $\Q$-Cartier $\Z$-divisors.
\item Denote by $\on{Big}(Y)$ the divisor classes in $\on{NS}(Y)_\R$ corresponding to big $\R$-divisors.
\end{itemize}

Observe that when $Y$ is $\Q$-factorial we have
$$\nefqc(Y)_\bK=\on{nef}(Y)_\bK\text{ and }\Nefqc(Y)_\bK=\on{Nef}(Y)_\bK$$
For $D\in\on{WDiv}(Y)_\R$ denote by $\lf D\rf$ the `round-down' of $D$ defined by
$$\lf\sum a_iD_i\rf:=\sum\lf a_i\rf D_i$$
where $D_i$ are prime Weil divisors on $Y$.

Lastly, when $Y$ is toric we denote the polytope corresponding to a nef torus-invariant divisor $D$ by $P(D)$. We recall that the lattice points in $P(D)$ are in bijection with a basis for $H^0(D)$. When $D$ is not nef, instead of a polytope we obtain an oriented hyperplane arrangement $\mA(D)$ as in \cite{perl}. Similarly, the lattice points in the `positive region' bounded by $\mA(D)$ index a basis for global sections \cite[\S9.1]{cls}.

\subsection{Construction of algebraic capacities}

We formally define the invariants coming from the optimisation problems in \cite{bwo,cw,bwt}.

\begin{definition} \label{def:alg_cap} For a weakly polarised surface $(Y,A)$ define the $k$\emph{th algebraic capacity} of $(Y,A)$ to be
$$\calg_k(Y,A):=\inf_{D\in\Nefqc(Y)}\{D\cdot A:\chi(D)\geq k+\chi(\mO_Y)\}$$
\end{definition}

\begin{lemma} \label{lem:alg_attain} Suppose $(Y,A)$ is a weakly polarised surface. The infimum defining $\calg_k(Y,A)$ is achieved by a nef $\Z$-divisor.
\end{lemma}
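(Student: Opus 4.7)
The plan is to reduce the infimum to a minimum over a finite set, using bigness of $A$ to make the level sets of $D\mapsto D\cdot A$ intersect the nef cone compactly.

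First I would verify the infimum is finite and non-negative. Projectivity of $Y$ provides an ample Cartier divisor $A_0$, and $\chi(nA_0)$ grows like $\tfrac12 n^2 A_0^2$, so for $n\gg 0$ the class $nA_0\in\Nefqc(Y)$ satisfies the constraint $\chi(nA_0)\geq k+\chi(\mO_Y)$. For any nef $\Q$-Cartier $D$, Kodaira's lemma lets us write $A=A'+E$ with $A'$ ample and $E$ an effective $\Q$-divisor, giving $D\cdot A=D\cdot A'+D\cdot E\geq 0$.

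The heart of the argument is to show that for any $M\geq 0$, the set $\Nefqc(Y)\cap\{D\cdot A\leq M\}$ is finite. The functional $D\mapsto D\cdot A$ is strictly positive on $\on{Nef}(Y)_\R\setminus\{0\}$: if $D$ is nef with $D\cdot A=0$, the decomposition above forces $D\cdot A'=0$, and the Hodge index theorem applied to $D$ and the ample class $A'$ then yields $D\equiv 0$. Since $\on{Nef}(Y)_\R$ is a closed, pointed convex cone in the finite-dimensional space $\on{NS}(Y)_\R$, the slab $\on{Nef}(Y)_\R\cap\{D\cdot A\leq M\}$ is compact, so it contains only finitely many classes of the lattice $\on{NS}(Y)_\Z$ (modulo its finite torsion). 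Taking $M=\calg_k(Y,A)+1$, any minimizing sequence in $\Nefqc(Y)$ eventually lies in this finite set, and so the infimum is attained at some class $D\in\Nefqc(Y)$, which by definition has a nef $\Q$-Cartier $\Z$-divisor representative.

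The main technical point is the extension of Hodge index and of the positivity $D\cdot E\geq 0$ to the possibly singular setting. The latter follows because a positive multiple $mD$ is Cartier, so $(mD)\cdot E_i\geq 0$ for each prime component $E_i$ of $E$ by nefness of $mD$, and then $D\cdot E = \tfrac{1}{m}\sum a_i (mD)\cdot E_i\geq 0$. Hodge index itself holds for any projective normal surface, as it is a statement about the intersection form on $N^1(Y)_\R$, which has signature $(1,\rho-1)$ in this generality. With these two ingredients secured, the compactness argument proceeds exactly as in the smooth case.
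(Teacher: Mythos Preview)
Your argument is correct and follows the same strategy as the paper: bound the feasible set by a slab $\{D\cdot A\leq M\}$ in the real nef cone, show this slab is compact, and conclude by finiteness of the lattice points. The paper dispatches the compactness step in a single phrase (``Kleiman's criterion''), whereas you unwind it via Kodaira's lemma and Hodge index to handle the fact that $A$ is only big; this is the same content made explicit, and your added care about the singular setting is appropriate.
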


\begin{proof} This follows essentially from Kleiman's criterion for nefness, from which it is apparent that the region of the real nef cone satisfying $D\cdot A\leq r$ is compact for any $r\geq0$. Pick a $\Z$-divisor $D_0$ with $\chi(D_0)\geq k+\chi(\mO_Y)$ and observe that $\calg_k(Y,A)\leq D_0\cdot A$. This implies that the infimum defining $\calg_k(Y,A)$ ranges over nef $\Z$-divisors $D$ with $D\cdot A\leq D_0\cdot A$ of which there are finitely many.
\end{proof}

We define the \emph{cap function} of $(Y,A)$ to be the counting function for $\calg_k(Y,A)$; that is,
\begin{align*}
\on{cap}_{(Y,A)}(x)&:=\#\{k:\calg_k(Y,A)\leq x\} \\
&=1+\sup_{D\in\Nefqc(Y)}\{\chi(D)-\chi(\mO_Y):D\cdot A\leq x\}
\end{align*}

Define the \emph{index} of a $\Q$-Cartier $\Z$-divisor $D$ on $Y$ by
$$I(D):=D\cdot(D-K_Y)$$
In a situation where Noether's formula holds -- for instance when $Y$ is smooth or has at worst canonical singularities \cite{ypg} -- we have
$$\chi(D)=\chi(\mO_Y)+\frac{1}{2}I(D)$$
and so in such a situation
$$\calg_k(Y,A)=\inf_{D\in\Nefqc(Y)}\{D\cdot A:I(D)\geq 2k\}$$
We briefly discuss how this relates to optimisation problems in symplectic geometry. The index $I(D)$ agrees with the ECH index \cite[Def.~2.14]{hu3} of the homology class of $D$ in $H_2(Y,\Z)$ and the intersection product $D\cdot A$ is the symplectic area of $D$ with respect to the Poincar\'e dual of $A$. 

\subsection{Vanishing theorems}

Certain vanishing theorems give simpler presentations of algebraic capacities. We mention two that have found use in obstructing symplectic embeddings \cite{cw,bwo}.

\begin{lemma}[{\cite[Thm. 9.3.5.]{cls}}] \label{lem:nefq_van} Suppose $Y$ is a toric surface and $D$ is a nef $\Q$-divisor. Then
$$h^p(D):=h^p(\lf D\rf)=0\text{\textnormal{ for all $p>0$}}$$ 
\end{lemma}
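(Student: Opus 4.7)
The plan is to bootstrap from CLS Thm~9.3.5 (Demazure vanishing for nef Cartier $\Z$-divisors) by combining the character decomposition of toric cohomology with the observation that rounding down $\Q$-valued coefficients does not affect integer inequalities. I would work with a torus-invariant representative $D=\sum_\rho a_\rho D_\rho$ so that $\lfloor D\rfloor=\sum_\rho\lfloor a_\rho\rfloor D_\rho$ is a torus-invariant Weil divisor.

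Next I invoke the standard character decomposition of cohomology for torus-invariant Weil divisors (CLS~\S9.1): for any such $E=\sum_\rho b_\rho D_\rho$,
$$H^p(Y,\mO_Y(E))=\bigoplus_{m\in M}\widetilde H^{p-1}(V_{E,m};\C),$$
where $V_{E,m}$ is a certain subcomplex of $|\Sigma|$ supported on the rays $\rho$ with $\langle m,u_\rho\rangle+b_\rho<0$. The key observation is that for every $m\in M$ the integer $\langle m,u_\rho\rangle$ satisfies $\langle m,u_\rho\rangle+\lfloor a_\rho\rfloor<0\iff\langle m,u_\rho\rangle\leq-\lfloor a_\rho\rfloor-1\iff\langle m,u_\rho\rangle+a_\rho<0$, so the support complex at $m$ is the same for $\lfloor D\rfloor$ and for the original $\Q$-divisor $D$: $V_{\lfloor D\rfloor,m}=V_{D,m}$.

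Finally, nefness of $D$ translates into convexity of the associated $\Q$-valued piecewise-linear function on $|\Sigma|$, and the convexity argument underlying the proof of CLS~Thm~9.3.5 shows that each $V_{D,m}$ is either empty or contractible, giving $\widetilde H^{p-1}(V_{D,m})=0$ for all $p>0$; combining with the previous step yields $h^p(\lfloor D\rfloor)=0$. The main subtlety --- and the reason one cannot just cite CLS~Thm~9.3.5 as a black box --- is that $\lfloor D\rfloor$ itself need not be nef even when $D$ is, so Demazure's Cartier formulation does not apply directly to $\lfloor D\rfloor$; one must drop down to the character-wise description of cohomology, where the round-down becomes invisible.
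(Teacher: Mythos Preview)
The paper does not supply its own proof of this lemma; it simply cites \cite[Thm.~9.3.5]{cls} (Demazure vanishing) and moves on. Your proposal is correct and in fact does more than the paper: you spell out why the result holds for the round-down of a nef $\Q$-divisor, which is not literally covered by the Cartier statement in CLS since $\lfloor D\rfloor$ can fail to be nef. Your key step---that for integer $\langle m,u_\rho\rangle$ the inequality $\langle m,u_\rho\rangle+\lfloor a_\rho\rfloor<0$ is equivalent to $\langle m,u_\rho\rangle+a_\rho<0$, so the support complexes $V_{\lfloor D\rfloor,m}$ and $V_{D,m}$ coincide---is exactly the right observation, and the convexity/contractibility argument from the CLS proof then applies verbatim to the $\Q$-valued support function.

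So: the approaches are the same in spirit (both rest on Demazure's convexity argument), but where the paper treats the $\Q$-divisor extension as folklore absorbed into the citation, you have made the passage from $D$ to $\lfloor D\rfloor$ explicit. That is a genuine clarification, not a different route.
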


\begin{prop} \label{prop:toricq} Let $Y$ be a toric surface, and let $A$ be a big $\R$-divisor on $Y$. Then
$$\calg_k(Y,A)=\inf_{D\in\on{Nef}(Y)_\Z}\{D\cdot A:h^0(D)\geq k+1\}$$
for all $k\geq0$.
\end{prop}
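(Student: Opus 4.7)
The plan is to verify that the two infima have the same feasible sets and the same objective values, so they coincide. Everything hinges on translating the Euler characteristic condition in Definition~\ref{def:alg_cap} into an $h^0$ condition via the vanishing theorem.

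First I would observe that since $Y$ is toric, $Y$ is $\Q$-factorial, so every $\Z$-divisor is automatically $\Q$-Cartier. Consequently, as noted just after the definition of $\nefqc$, we have an equality $\Nefqc(Y)=\on{Nef}(Y)_\Z$ of subsets of $\on{NS}(Y)_\Z$. Thus the domains of the two optimisation problems agree, and the question reduces to matching up the constraints $\chi(D)\geq k+\chi(\mO_Y)$ and $h^0(D)\geq k+1$.

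Next I would recall that for any complete toric variety $Y$ one has $H^i(Y,\mO_Y)=0$ for $i>0$ (a consequence of the standard toric vanishing used in Lemma~\ref{lem:nefq_van}, applied to the zero divisor), so $\chi(\mO_Y)=1$. Then for any $D\in\on{Nef}(Y)_\Z$, Lemma~\ref{lem:nefq_van} gives $h^p(D)=0$ for all $p>0$, so
$$\chi(D)=h^0(D).$$
Therefore the constraint $\chi(D)\geq k+\chi(\mO_Y)=k+1$ is literally the same as $h^0(D)\geq k+1$ on this domain.

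Combining these two reductions identifies the two optimisation problems on the nose, giving the claimed equality. There is no real obstacle here: the content of the proposition is entirely in the vanishing theorem quoted in Lemma~\ref{lem:nefq_van} together with the rationality of toric surfaces, and the remainder is bookkeeping about $\Q$-factoriality. The only point to be mildly careful about is that $\Nefqc(Y)$ is by definition a subset of $\on{NS}(Y)_\Z$ while the condition $h^0(D)\geq k+1$ is a priori a property of a divisor rather than its class; but $h^0$ and intersection with $A$ both descend to $\on{NS}(Y)_\Z$, so this causes no trouble.
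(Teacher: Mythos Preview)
Your proof is correct and is exactly the argument the paper has in mind: the paper's one-line proof invokes Lemma~\ref{lem:nefq_van} together with the rationality of $Y$ (which gives $\chi(\mO_Y)=1$), and you have simply spelled out the details, including the observation that $\Q$-factoriality identifies $\Nefqc(Y)$ with $\on{Nef}(Y)_\Z$.
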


\begin{proof} This follows immediately from Lem.~\ref{lem:nefq_van} and the fact that $Y$ is rational.
\end{proof}

This formulation of algebraic capacities was used in the author's original paper \cite{bwo} that initiated the use of this kind of algebraic optimisation problem to study symplectic embeddings.

Kawamata--Viehweg vanishing gives a similar reformulation when $-K_Y$ is nef and big.

\begin{prop} \label{prop:cap_kv} Let $Y$ be a smooth surface such that $-K_Y$ is nef and big, and let $A$ be a big $\R$-divisor on $Y$. Then
$$\calg_k(Y,A)=\inf_{D\in\on{Nef}(Y)_\Z}\{D\cdot A:h^0(D)\geq k+1\}$$
for all $k\geq0$.
\end{prop}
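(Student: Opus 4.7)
The plan is to show that under the hypotheses, $\chi(D)=h^0(D)$ for every nef $\Z$-divisor $D$, and $\chi(\mO_Y)=1$, so that the constraint $\chi(D)\geq k+\chi(\mO_Y)$ in Def.~\ref{def:alg_cap} matches $h^0(D)\geq k+1$ exactly. The claim then follows because algebraic capacities are attained on nef $\Z$-divisors by Lem.~\ref{lem:alg_attain}.

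First, I would compute $\chi(\mO_Y)$. Since $-K_Y$ is nef and big on the smooth surface $Y$, Kawamata--Viehweg vanishing applied to $-K_Y$ gives
\[
H^i(Y,\mO_Y)=H^i(Y,K_Y+(-K_Y))=0\quad\text{for }i>0.
\]
As $Y$ is connected, $h^0(\mO_Y)=1$, so $\chi(\mO_Y)=1$.

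Next, for any nef $\Z$-divisor $D$ on $Y$, I would write $D=K_Y+L$ with $L:=D-K_Y=D+(-K_Y)$. Since $D$ is nef and $-K_Y$ is nef and big, their sum $L$ is nef and big (sums of nef divisors are nef, and adding any nef divisor to a big and nef one stays big and nef). Kawamata--Viehweg vanishing then yields $H^i(Y,D)=H^i(Y,K_Y+L)=0$ for $i>0$, so $\chi(D)=h^0(D)$.

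Combining these two facts, for nef $\Z$-divisors $D$ the condition $\chi(D)\geq k+\chi(\mO_Y)$ becomes exactly $h^0(D)\geq k+1$. By Lem.~\ref{lem:alg_attain} the infimum in Def.~\ref{def:alg_cap} is attained on $\Nefqc(Y)=\on{Nef}(Y)_\Z$ (since $Y$ is smooth, hence $\Q$-factorial), so replacing the defining constraint gives the stated formula. No step is really an obstacle — Kawamata--Viehweg supplies everything needed, and smoothness ensures $\Q$-factoriality and Noether's formula are available; the only mild subtlety is checking that $L$ is big and nef (not merely nef) in order to apply Kawamata--Viehweg rather than just Kodaira, which is automatic from $-K_Y$ being big.
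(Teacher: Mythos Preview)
Your argument is correct and is precisely the approach the paper has in mind: the paper does not spell out a proof but simply points to Kawamata--Viehweg vanishing as the analog of Demazure vanishing in Prop.~\ref{prop:toricq}, and your two applications of Kawamata--Viehweg (to $\mO_Y=K_Y+(-K_Y)$ and to $D=K_Y+(D-K_Y)$) are exactly what is required.
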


\subsection{Preferable divisors}

We codify what it means for a divisor to be `preferable' to another from the point of view of the optimisation problems in Definition \ref{def:alg_cap} for pseudo-polarised surfaces.

\begin{definition}[{c.f.~\cite[Def.~3.11]{cw}}] \label{def:pref} Let $Y$ be a projective normal surface.
\begin{itemize}
\item We say that a Weil $\Q$-Cartier $\R$-divisor $D$ on $Y$ is \textit{index-preferable} to another Weil $\Q$-Cartier $\R$-divisor $D'$ on $Y$ if $\chi(D)\geq\chi(D')$.
\item We say that $D$ is \textit{area-preferable} to $D'$ if $D\cdot A\leq D'\cdot A$ for all big and nef $\Q$-Cartier $\R$-divisors $A$ on $Y$. 
\item We say that $D$ is \textit{preferable} to $D'$ if $D$ is both index-preferable and area-preferable to $D'$.
\end{itemize}
\end{definition}

Observe that $D_0$ is area-preferable to $D$ if and only if $D-D_0$ is effective. When $Y$ is smooth one can replace the inequality $\chi(D)\geq\chi(D')$ with $I(D)\geq I(D')$. Our first use for this notion is to show the following.

\begin{prop} \label{prop:nef_int} Suppose $Y$ is a smooth surface with $-K_Y$ effective and that $A$ is a big and nef $\R$-divisor on $Y$. Then, for all $k>\on{max}\{0,-\chi(\mO_Y)\}$
$$\calg_k(Y,A)=\inf_{D\in\on{nef}(Y)_\Q}\{D\cdot A:I(D)\geq 2k\}$$
That is, for every nef $\Q$-divisor with $I(D)\geq 2k$ there is a preferable nef $\Z$-divisor. The same conclusion holds for all $k>0$ when $Y$ is a possibly singular toric surface.
\end{prop}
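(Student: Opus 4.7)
The inequality $\calg_k(Y,A)\geq\inf_{D\in\on{nef}(Y)_\Q}\{D\cdot A:I(D)\geq 2k\}$ is immediate since every integer nef divisor lies in $\on{nef}(Y)_\Q$. The content is the reverse, which I will obtain from the pointwise claim highlighted in the proposition: for every $D\in\on{nef}(Y)_\Q$ with $I(D)\geq 2k$, the round-down $D':=\lfloor D\rfloor$ is a nef $\Z$-divisor preferable to $D$.

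Area-preferability is immediate because $D-D'=\{D\}$ is an effective $\R$-divisor and $A$ is nef, so $D'\cdot A\leq D\cdot A$. For index-preferability in the toric case, I will observe that the lattice points of $P(D)$ and $P(\lfloor D\rfloor)$ coincide, since the inequalities $\langle m,v_\rho\rangle\geq -a_\rho$ and $\langle m,v_\rho\rangle\geq -\lfloor a_\rho\rfloor$ are equivalent for $m\in M$; combined with Lem.~\ref{lem:nefq_van} and toric Riemann--Roch this gives $\chi(\lfloor D\rfloor)=\chi(D)$, hence $I(D')\geq I(D)\geq 2k$. In the smooth case with $-K_Y$ effective, the hypothesis $k>\max\{0,-\chi(\mO_Y)\}$ ensures $\chi(D)>0$, and a Kawamata--Viehweg-style vanishing using the effective anticanonical combined with Serre duality replaces Lem.~\ref{lem:nefq_van}, yielding the same Euler characteristic comparison.

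The principal obstacle is showing that $\lfloor D\rfloor$ is nef. In the toric case I would reduce $D$ to a torus-invariant representative and verify combinatorially that the upper convexity of the associated support function on the fan survives rounding, using that the fractional shifts $\{a_\rho\}\in[0,1)$ at each ray are bounded strictly below a unit lattice length and that $P(D)$ and $P(\lfloor D\rfloor)$ share their lattice vertices. For smooth $Y$ with $-K_Y$ effective, this step is more delicate: arguing by contradiction, if $\lfloor D\rfloor\cdot C<0$ for some prime divisor $C$, then $C$ must appear in the support of $\{D\}$, and the adjunction formula together with the effective anticanonical and the lower bound $I(D)\geq 2k$ rules out such configurations. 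The smooth non-toric case is the main difficulty, since the absence of a combinatorial nef cone forces a geometric argument that leverages $-K_Y\geq 0$ through adjunction and vanishing.
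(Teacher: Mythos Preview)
The central claim of your plan---that $\lfloor D\rfloor$ is nef---is false, and the paper explicitly exhibits a counterexample. In the toric setting, rounding down a nef $\Q$-divisor can push a facet hyperplane past a neighbouring vertex so that the corresponding torus-invariant curve $E$ satisfies $\lfloor D\rfloor\cdot E<0$; this is precisely what Fig.~\ref{fig:floor} depicts. Your proposed argument that ``upper convexity of the support function survives rounding'' therefore cannot go through: the fractional shifts, though each below $1$, can accumulate at a vertex bounded by short edges and destroy convexity. Since this already fails on smooth toric surfaces (which have $-K_Y$ effective), the adjunction-and-contradiction sketch for the general smooth case cannot succeed either.

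The paper's route avoids this obstacle entirely. It only shows that $\lfloor D\rfloor$ is \emph{effective} and preferable to $D$: area-preferability is as you say, index-preferability is automatic from the convention $\chi(D):=\chi(\lfloor D\rfloor)$, and effectiveness comes from killing $h^2(\lfloor D\rfloor)=h^0(K_Y-\lfloor D\rfloor)$ using the hypothesis that $-K_Y$ is effective together with $k>0$. The passage from an effective $\Z$-divisor to a preferable \emph{nef} $\Z$-divisor is then supplied by Prop.~\ref{prop:ne_nef}, whose engine is the isoparametric transform of Lem.~\ref{lem:iso_pref}. For singular toric surfaces the paper substitutes a direct combinatorial analogue: subtract multiples of each offending divisor $E$ until its hyperplane re-enters the positive region, which leaves the lattice-point count (hence $\chi$) unchanged while only decreasing $D\cdot A$. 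The missing ingredient in your proposal is exactly this effective-to-nef step.
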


We can consider optimisation problems obtained ranging over $\Q$-divisors (or $\R$-divisors) instead of $\Z$-divisors as in Def.~\ref{def:alg_cap}. Set
$$\calg_k(Y,A)_\Q:=\underset{D\in\nefqc(Y)_\Q}{\on{min}}\{D\cdot A:I(D)\geq 2k\}$$
where $\chi(D):=\chi(\lf D\rf)$ for $D$ a Weil $\Q$-divisor. Observe that it is necessary to work with Weil $\Q$-divisors as opposed to $\Q$-divisor classes since if $D$ and $D'$ are $\Q$-algebraically equivalent it does not imply that the round-downs $\lf D\rf$ and $\lf D'\rf$ are $\Z$-algebraically (or even numerically) equivalent. This infimum yields the same result as when ranging over nef $\R$-divisors.

\begin{cor} When $(Y,A)$ is a pseudo-polarised surface with either $Y$ smooth and $-K_Y$ effective or $Y$ toric, then
$$\calg_k(Y,A)=\calg_k(Y,A)_\Q$$
for all $k>\on{max}\{0,-\chi(\mO_Y)\}$.
\end{cor}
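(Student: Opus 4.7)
The plan is to deduce this from Proposition~\ref{prop:nef_int} with essentially no additional work. Both $\calg_k(Y,A)$ and $\calg_k(Y,A)_\Q$ are extrema of $D \cdot A$ over nef $\Q$-Cartier divisors; they differ only in that $\calg_k(Y,A)$ ranges over $\Z$-divisor classes with $\chi(D) \geq k + \chi(\mO_Y)$ while $\calg_k(Y,A)_\Q$ ranges over Weil $\Q$-divisors with $I(D) \geq 2k$. Since $Y$ is $\Q$-factorial in both cases considered (smooth, or toric), we have $\nefqc(Y)_\Q = \on{nef}(Y)_\Q$, so $\calg_k(Y,A)_\Q$ coincides with the infimum on the right-hand side of Proposition~\ref{prop:nef_int}.

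For the inequality $\calg_k(Y,A)_\Q \leq \calg_k(Y,A)$, I would observe that for a nef $\Q$-Cartier $\Z$-divisor $D$ on any of our surfaces, Riemann--Roch (in the smooth case, or toric Riemann--Roch together with Lemma~\ref{lem:nefq_van} and $\chi(\mO_Y) = 1$ for rational $Y$) gives $\chi(D) - \chi(\mO_Y) = \tfrac{1}{2}I(D)$, so $\chi(D) \geq k + \chi(\mO_Y) \iff I(D) \geq 2k$. Every competitor for $\calg_k(Y,A)$ therefore competes for $\calg_k(Y,A)_\Q$.

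The reverse inequality is immediate from Proposition~\ref{prop:nef_int}: for any $D \in \nefqc(Y)_\Q$ with $I(D) \geq 2k$, the proposition supplies a preferable nef $\Z$-divisor $D'$, and preferability yields $D' \cdot A \leq D \cdot A$ together with $\chi(D') \geq \chi(D)$ (equivalently, $I(D') \geq I(D) \geq 2k$ in the smooth setting), so that $D'$ is a legitimate competitor for $\calg_k(Y,A)$. Taking the infimum over $D$ gives $\calg_k(Y,A) \leq \calg_k(Y,A)_\Q$, and as a bonus shows that the minimum defining $\calg_k(Y,A)_\Q$ is attained by a $\Z$-divisor.

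The only real subtlety lies in matching the two constraint forms $\chi(D) \geq k + \chi(\mO_Y)$ and $I(D) \geq 2k$, which is where Riemann--Roch enters and also explains the hypothesis $k > \max\{0, -\chi(\mO_Y)\}$: this threshold ensures the $\Z$-divisor constraint is nonempty and allows one to invoke Proposition~\ref{prop:nef_int}. With this matching in place, the corollary is effectively a notational restatement of Proposition~\ref{prop:nef_int}, so I do not expect any serious obstacle beyond unpacking definitions.
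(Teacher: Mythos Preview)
Your proposal is correct and matches the paper's (implicit) argument: the paper states this as a corollary with no proof, precisely because the right-hand side of Proposition~\ref{prop:nef_int} is, once one notes that $\nefqc(Y)_\Q=\on{nef}(Y)_\Q$ for $\Q$-factorial $Y$, literally the definition of $\calg_k(Y,A)_\Q$. Your first paragraph already finishes the proof; the subsequent paragraphs re-deriving each inequality are redundant (and the Riemann--Roch step for singular toric $Y$ is delicate, though harmless here since you only need Proposition~\ref{prop:nef_int} itself).
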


This shows that the construction using nef $\Q$-divisors in \cite[Thm.~1.5]{bwo} for toric surfaces agrees with our construction here. Our main tool for locating preferable divisors is the `isoparametric transform' of \cite{phys}. This takes an effective divisor $D$ to
$$\ip_Y(D):=D-\sum_{D\cdot D_i<0}\lc\frac{D\cdot D_i}{D_i^2}\rc D_i$$
where the sum ranges over all prime divisors $D_i$ with $D\cdot D_i<0$. Observe that such $D_i$ must have $D_i^2<0$ and hence the coefficients in the sum are positive. We denote by $\ip^n_Y$ the result of iterating $\ip_Y$ $n$ times. In \cite{phys} the following is shown.

\begin{thm}[{\cite[Thm.~1.1 + 1.2]{phys}}] \label{thm:phys} For any effective divisor $D$ on a smooth surface $Y$ we have
$$h^0(D)=h^0(\ip_Y(D))$$
After a finite number $n$ of iterations $\ip^n_Y(D)$ lies in $\on{Nef}(Y)_\Z$.
\end{thm}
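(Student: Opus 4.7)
The plan is to separate the theorem into two independent claims — the $h^0$ invariance under one application of $\ip_Y$, and the finite termination of the iteration — both of which reduce to the observation that distinct prime divisors on a smooth surface have non-negative intersection, together with a basic ample-degree accounting.

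\emph{Step 1 ($h^0$ invariance).} First I would identify $\sum_i m_iD_i$ (with $m_i:=\lc D\cdot D_i/D_i^2\rc$) as a fixed part of $|D|$. Given an effective $D'\in|D|$ with $\mu_i$ the multiplicity of the prime $D_i$ in $D'$, write $D'=\mu_iD_i+R$ with $R$ effective and not containing $D_i$. Since $R$ is a non-negative combination of primes distinct from $D_i$, the intersection $R\cdot D_i$ is non-negative, so $\mu_iD_i^2\leq D\cdot D_i<0$. In particular $D_i^2<0$ (a standard consequence of $D\cdot D_i<0$ together with $D$ effective and $D_i$ prime, obtained by decomposing $D$ along $D_i$), so dividing flips the inequality to $\mu_i\geq D\cdot D_i/D_i^2$; integrality then forces $\mu_i\geq m_i$. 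Because the relevant $D_i$ are distinct primes, these fixed contributions add, and subtraction gives a bijection $|D|\leftrightarrow|\ip_Y(D)|$, hence $h^0(D)=h^0(\ip_Y(D))$.

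\emph{Step 2 (termination).} Next I would track the ample degree. Fix an ample $\Z$-divisor $H$ on $Y$. Each prime divisor is effective, so $D_i\cdot H\in\Z_{\geq1}$, and a single iteration that acts nontrivially gives $\ip_Y(D)\cdot H\leq D\cdot H-1$. Iterating Step 1 keeps $\ip_Y^n(D)$ effective, so $\ip_Y^n(D)\cdot H\geq0$. Thus $\{\ip_Y^n(D)\cdot H\}$ is a strictly decreasing sequence of non-negative integers until it stabilises, which must occur within $D\cdot H$ steps. Stabilisation at step $n$ means no prime $D_i$ satisfies $\ip_Y^n(D)\cdot D_i<0$, i.e., $\ip_Y^n(D)\in\on{Nef}(Y)_\Z$; integrality along the iteration is preserved since each $m_i\in\Z$.

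The main obstacle I anticipate is the simultaneity subtlety in Step 1: subtracting $m_iD_i$ for several primes at once could conceivably be problematic if the adjustments interfered, since after subtracting $m_iD_i$ the intersection with $D_j$ may change. The way I would navigate this is to argue that each $D_i$ with $D\cdot D_i<0$ is \emph{independently} forced to appear as a fixed component of $|D|$ with multiplicity $\geq m_i$, regardless of the other primes; because the $D_i$ are distinct primes, these fixed-part contributions combine additively, justifying the simultaneous subtraction in a single application of $\ip_Y$ rather than requiring sequential removals with re-indexed coefficients.
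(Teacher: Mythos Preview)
The paper does not supply its own proof of this theorem; it is quoted from \cite{phys} and used as a black box. Your argument is correct and is essentially the standard one: Step~1 identifies $\sum_i m_iD_i$ as contained in the fixed part of the linear system $|D|$ by bounding the multiplicity of each negatively-intersecting prime separately, and Step~2 uses an ample-degree descent to force termination. Your handling of the simultaneity issue is also right: the lower bounds $\mu_i\geq m_i$ are obtained for each prime $D_i$ independently of the others, so they may be imposed together, and the resulting $\ip_Y(D)$ remains effective (apply the bound to $D'=D$ itself).
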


The isoparametric transform stabilises once it has reached a nef divisor; we denote this limit by $\ip_Y^\infty(D)$. We show that the isoparametric transform yields index-preferable divisors.

\begin{lemma}[{c.f.~\cite[Lem.~3.12]{cw}}] \label{lem:iso_pref} Let $Y$ be a smooth surface with $D$ an effective $\Z$-divisor on $Y$. Suppose $C_1,\dots,C_n$ is a collection of curves intersecting $D$ negatively. Then either one of the $C_i$ is a $(-1)$-curve or
$$I(D')\geq I(D)$$
where
$$D'=D-\sum_{i=1}^n\lc\frac{D\cdot C_i}{C_i^2}\rc C_i$$
In particular,
$$I(\ip_Y(D))\geq I(D)$$
if no $(-1)$-curve intersects $D$ negatively.
\end{lemma}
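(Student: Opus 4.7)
The plan is to compute $I(D') - I(D)$ directly using bilinearity. Setting $E := D - D' = \sum_{i=1}^n a_i C_i$ with $a_i := \lceil (D\cdot C_i)/C_i^2\rceil$, I would first note that each $a_i$ is a positive integer: $D\cdot C_i<0$ together with effectivity of $D$ forces $C_i$ to appear as a component of $D$, so $C_i^2<0$, making both numerator and denominator negative. A direct expansion then gives
$$I(D') - I(D) = E^2 - 2E\cdot D + E\cdot K_Y = \sum_{i=1}^n a_i\bigl(a_i C_i^2 - 2 C_i\cdot D + C_i\cdot K_Y\bigr) + 2\sum_{i<j} a_i a_j\, C_i\cdot C_j.$$
The cross-term sum is immediately non-negative, since the $C_i$ are pairwise distinct prime divisors on the smooth surface $Y$ and so $C_i\cdot C_j\geq 0$ for $i\neq j$.

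For the diagonal contributions, I would introduce $s_i := -C_i\cdot D \geq 1$ and $m_i := -C_i^2 \geq 1$, and extract from the ceiling the identity $a_i m_i = s_i + t_i$ with $0\leq t_i\leq m_i - 1$. Adjunction on the smooth surface $Y$ gives $C_i\cdot K_Y = 2p_a(C_i) - 2 + m_i$, so the bracketed factor rewrites as
$$a_i C_i^2 - 2 C_i\cdot D + C_i\cdot K_Y = (s_i - t_i) + (m_i - 2) + 2p_a(C_i).$$
Assuming $C_i$ is not a $(-1)$-curve forces $m_i\geq 2$, and then $s_i - t_i\geq 2 - m_i$ combines with $m_i - 2\geq 0$ and $p_a(C_i)\geq 0$ to make the bracket non-negative. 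Multiplying by $a_i>0$, summing over $i$, and adding the cross-term bound will yield $I(D')\geq I(D)$.

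The ``In particular'' claim is an immediate specialisation: $\ip_Y(D)$ is by definition the divisor produced by applying the construction to the (pairwise distinct) collection of all prime divisors $D_i$ with $D\cdot D_i<0$, and the hypothesis that no $(-1)$-curve meets $D$ negatively is exactly the hypothesis required by the main claim. The trickiest step is the ceiling bookkeeping for the diagonal terms — tracking $t_i\leq m_i - 1$ and $s_i - t_i\geq 2 - m_i$ without off-by-one errors — but beyond that the argument is forced: the $(-1)$-curve exclusion plays its role exclusively in ensuring $m_i\geq 2$, which is the single inequality that would otherwise threaten to fail.
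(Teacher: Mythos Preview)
Your argument is correct and takes a cleaner route than the paper's. The paper proceeds by induction on $n$: for a single curve $C$ with $C^2=-r$ it computes $I(D')-I(D)=2m\ell-m^2r-(2-r)m$ via the discrepancy formula $K_Y=\pi^*K_{\ol Y}+\frac{2-r}{r}C$ for the contraction of $C$, bounds this only strictly above $-2$, and then invokes the evenness of $I(\cdot)$ to close the gap; the inductive step uses that $F\cdot C\geq0$ when $F$ is effective and supported away from $C$. You instead expand $I(D')-I(D)$ for all $n$ at once, separate cross terms (handled by $C_i\cdot C_j\geq0$ for distinct primes) from diagonal terms, and use adjunction to rewrite each diagonal bracket as $(s_i-t_i)+(m_i-2)+2p_a(C_i)$, which is visibly non-negative from $s_i-t_i\geq 2-m_i$. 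This avoids both the contraction---and with it the paper's tacit assumption that $C$ is rational, needed for the discrepancy $\frac{2-r}{r}$---and the parity trick, yielding the non-strict inequality directly.

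One minor wrinkle: the assertion ``not a $(-1)$-curve forces $m_i\geq2$'' is not literally true, since an irreducible curve with $C_i^2=-1$ and $p_a(C_i)\geq1$ has $m_i=1$ yet is not a $(-1)$-curve. But your bound does not actually require $m_i\geq2$: from $s_i\geq1$ and $t_i\leq m_i-1$ you get $(s_i-t_i)+(m_i-2)\geq(2-m_i)+(m_i-2)=0$ for every $m_i\geq1$, and then $2p_a(C_i)\geq0$ finishes. So the hypothesis $m_i\geq2$ can simply be dropped from your write-up, and in fact your computation shows $I(D')\geq I(D)$ even when some $C_i$ is a $(-1)$-curve (where the bracket becomes $s_i-1\geq0$), which is stronger than what the lemma asserts.
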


\begin{proof} Suppose $n=1$ so that there is only one curve $C$. If $C^2=-1$ we are done, so let $C^2=-r$ for $r\geq2$. Let $D\cdot C=-\ell$ so that
$$D'=D-\lc\frac{\ell}{r}\rc C=:D-mC$$
Let $\pi\colon Y\to\ol{Y}$ be the contraction of $C$ to the singular surface $\ol{Y}$. We can compute
\begin{align*}
I(D')&=(D-mC)\cdot(D-mC-K_Y) \\
&=I(D)-2mD\cdot C+(-mC)\cdot(-mC-K_Y) \\
&=I(D)+2m\ell+(-mC)\cdot(-mC-\pi^*K_{\ol{Y}}-\frac{2-r}{r}C) \\
&=I(D)+2m\ell-m^2r-(2-r)m
\end{align*}
Observe that $1>m-\frac{\ell}{r}\geq 0$ by definition and so $\ell>r(m-1)$ or equivalently $\ell+r>rm$. Continuing:
\begin{align*}
I(D')&=I(D)+2m\ell-mr(m+\frac{2-r}{r}) \\
&>I(D)+2m\ell-(\ell+r)(m+\frac{2-r}{r}) \\
&=I(D)+m\ell+\ell\cdot\frac{r-2}{r}-mr+r-2 \\
&\geq I(D)+m\ell-r(m-1)-2 \\
&> I(D)+(m-1)\ell-2 \\
&\geq I(D)-2
\end{align*}
using $r\geq 2$ and $m\geq 1$. However $I(\cdot)$ is even and so $I(D')>I(D)-2$ implies that $I(D')\geq I(D)$.

Now induct on the number of curves. Suppose the formula holds for a set of $n$ curves meeting an effective divisor negatively. Suppose curves $C_1,\dots,C_n,C$ intersect $D$ negatively. If any of the curves is a $(-1)$-curve then we are done. Assume not. Notate
$$D\cdot C=-\ell,\;\;\; C^2=-r,\;\;\;\lc\frac{D\cdot C}{C^2}\rc=m$$
and
$$F=\sum_{i=1}^{n-1}m_iC_i$$
so that $D'=D-F-mC$. Compute
\begin{align*}
I(D-F-mC)&= \\
&=I(D-F)+2mF\cdot C-2mD\cdot C+I(-mC) \\
&\geq I(D-F)+2m\ell-mr(m+\frac{2-r}{r}) \\
&>I(D-F)+(m-1)\ell-2 \\
&\geq I(D-F)-2
\end{align*}
where we used that $F\cdot C\geq0$ since $F$ is effective and supported away from $C$. By inductive assumption we have $I(D-F)\geq I(D)$ and so we have $I(D')>I(D)-2$. Since $I(\cdot)$ is even we can conclude that $I(D')\geq I(D)$ as desired.
\end{proof}

We use Lem.~\ref{lem:iso_pref} to prove the following result related to \cite[Thm.~3.3]{cw}.

\begin{prop} \label{prop:ne_nef} Suppose $Y$ is a smooth surface with a big and nef $\R$-divisor $A$ on $Y$. Then
$$\inf_{D\in\on{NE}(Y)_\Z}\{D\cdot A:I(D)\geq 2k\}=\inf_{D\in\on{Nef}(Y)_\Z}\{D\cdot A:I(D)\geq 2k\}$$
\end{prop}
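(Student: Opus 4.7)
The plan is to prove the two inequalities separately. For $\inf_{\on{NE}} \leq \inf_{\on{Nef}}$, observe that on any projective variety the nef cone is contained in the pseudo-effective cone -- amples are effective and we take closures -- so every class in $\on{Nef}(Y)_\Z$ lies in $\on{NE}(Y)_\Z$. The larger domain gives the smaller infimum, establishing this direction.

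The substantive direction is $\inf_{\on{Nef}} \leq \inf_{\on{NE}}$: starting from an effective $\Z$-divisor $D$ with $I(D)\geq 2k$, I want to produce a nef $\Z$-divisor $D^\ast$ with $I(D^\ast)\geq 2k$ and $D^\ast\cdot A \leq D\cdot A$. The plan is to iterate a single-curve variant of the isoparametric transform. If $D$ is not already nef, pick an irreducible curve $C$ with $D\cdot C<0$ and set $D':=D-mC$ where $m:=\lc(D\cdot C)/C^2\rc>0$. Three properties then need to be verified. First, $D'$ remains effective: decomposing $D=aC+E$ with $E$ effective and not containing $C$ in its support gives $E\cdot C\geq 0$, so $aC^2\leq D\cdot C<0$ forces $a\geq m$, and $D'=(a-m)C+E$ is effective. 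Second, $D'\cdot A\leq D\cdot A$, since $A\cdot C\geq 0$ by nefness of $A$ and $m>0$. Third, when $C$ is not a $(-1)$-curve the inequality $I(D')\geq I(D)$ is exactly the $n=1$ case of Lem.~\ref{lem:iso_pref}.

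The main obstacle is the $(-1)$-curve case, which Lem.~\ref{lem:iso_pref} explicitly excludes. It has to be handled by a separate adjunction computation: when $C^2=K_Y\cdot C=-1$ and $D\cdot C=-\ell$ one has $m=\ell$, and
\[
I(D')=I(D)-2mD\cdot C+m^2C^2+mK_Y\cdot C=I(D)+2m\ell-m^2-m=I(D)+m(m-1)\geq I(D).
\]
Iterating this single-curve subtraction, termination at a nef $\Z$-divisor $D^\ast$ is guaranteed by Thm.~\ref{thm:phys} (the process is bounded by the iteration of $\ip_Y$, which terminates after finitely many steps). Each step preserves effectivity, weakly increases $I$, and weakly decreases pairing with $A$, so the resulting $D^\ast\in\on{Nef}(Y)_\Z$ satisfies $I(D^\ast)\geq I(D)\geq 2k$ and $D^\ast\cdot A\leq D\cdot A$. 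Taking the infimum over $D\in\on{NE}(Y)_\Z$ completes the proof.
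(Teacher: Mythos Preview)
Your substantive direction is correct and takes a genuinely different route from the paper. The paper argues by induction on the number $b$ of blowups separating $Y$ from a minimal model: when $b=0$ there are no $(-1)$-curves and Lem.~\ref{lem:iso_pref} applies to every step of $\ip_Y$; when $b>0$ and some $(-1)$-curve $E$ meets $D$ nonpositively, the paper contracts $E$, writes $D=\pi^*\ol{D}+mE$, and invokes the inductive hypothesis on the blowdown $\ol{Y}$ to find a preferable nef divisor there. You instead observe that the $(-1)$-curve case can be handled by the direct adjunction computation $I(D-mC)=I(D)+m(m-1)\geq I(D)$, which removes any need for induction on $b$ or for contractions. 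This is cleaner: once the single-curve subtraction never decreases $I$ regardless of the self-intersection of $C$, the whole argument is a straight iteration, and the paper's case distinction becomes unnecessary.

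Two points deserve tightening. First, your termination appeal to Thm.~\ref{thm:phys} is loose: that theorem concerns the full transform $\ip_Y$ (subtracting all negatively-meeting curves simultaneously), and it is not clear in what sense your single-curve variant is ``bounded by'' that iteration. Termination is immediate by a direct argument, however: since $D\cdot C<0$ forces $C$ into the support of the effective divisor $D$, each step strictly decreases $D\cdot H$ for any fixed ample $\Z$-divisor $H$, and this is a nonnegative integer. Second, your argument for $\inf_{\on{NE}}\leq\inf_{\on{Nef}}$ only places nef classes inside the \emph{pseudo}-effective cone, whereas $\on{NE}(Y)_\Z$ as used in the paper means genuinely effective $\Z$-divisors (the isoparametric transform requires this). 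The paper's proof does not address this direction either, and in all applications only the direction you call substantive is actually invoked, so this is a shared lacuna rather than a defect particular to your argument.
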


\begin{proof} We prove this by induction on the number of blowups $b$ relating $Y$ to a minimal surface. When $b=0$ -- and so $Y$ is minimal -- $Y$ has no $(-1)$-curves and so $\ip_Y(D)$ is preferable to $D$ for any $D\in\on{NE}(Y)_\Z$ by Lem.~\ref{lem:iso_pref}. Iterating $\ip_Y$ gives that $\ip_Y^\infty(D)$ is a preferable nef $\Z$-divisor to $D$ from which the result follows.

Suppose $b>0$, and assume that the statement holds for all smooth surfaces expressible at most $b-1$ blowups from a minimal surface. Let $Y$ be expressed as $b$ blowups of a minimal surface. Since $b>0$ we see that $Y$ is not minimal and so contains a $(-1)$-curve. Let $D\in\on{NE}(Y)_\Z$. If $D$ intersects a $(-1)$-curve $E$ nonpositively then we can write $D=\pi^*\ol{D}+mE$ for some $m\geq0$ and some effective $\Z$-divisor $\ol{D}$ on $\ol{Y}$, where $\pi\colon Y\to\ol{Y}$ is the contraction of $E$. By the inductive hypothesis there exists a preferable nef $\Z$-divisor $\ol{D}_0$ to $\ol{D}$. We claim that $\pi^*\ol{D}_0$ is a preferable nef $\Z$-divisor to $D$. Indeed, we can compute
$$D\cdot A\geq\pi^*\ol{D}\cdot A\geq\pi^*\ol{D}_0\cdot A$$
and
$$I(D)\leq I(\pi^*\ol{D})=I(\ol{D})\leq I(\ol{D}_0)=I(\pi^*\ol{D}_0)$$
If $D$ intersects all $(-1)$-curves positively, consider $\ip_Y(D)$. This is a preferable effective $\Z$-divisor to $D$ by Lem.~\ref{lem:iso_pref} and by noting that $D-\ip_Y(D)$ is effective. If $\ip_Y(D)$ intersects a $(-1)$-curve nonpositively, we are done by the argument above. If not, apply $\ip_Y(D)$ again. Continuing in this way, we reach a preferable nef $\Z$-divisor if there exists $n$ such that $\ip_Y^n(D)\cdot E\leq0$ for some $(-1)$-curve $E$, or we reach $\ip_Y^\infty(D)$, which is nef by Thm.~\ref{thm:phys} and preferable by Lem.~\ref{lem:iso_pref}.
\end{proof}

\begin{proof}[Proof of Prop.~\ref{prop:nef_int}] Consider a nef $\Q$-divisor $D$ with $I(D)\geq 2k$. If $h^0(D)\geq 1$ then $\lf D\rf$ is an effective $\Z$-divisor that is preferable to $D$. The result then follows by Prop.~\ref{prop:ne_nef}. For $k>-\chi(\mO_Y)$, if $h^2(\lf D\rf)=0$ then we see that $h^0(\lf D\rf)\geq 1$.

We claim that if $-K_Y$ is effective then $\lf D\rf$ must be effective whenever $D$ is a nef $\Q$-divisor with $I(D)\geq 2k$ for any $k>0$. Consider the adjoint divisor $K_Y-\lf D\rf=K_Y-\lc D\rc+\Delta$ where $\Delta$ is an effective divisor with all nonzero coefficients equal to $1$. Observe that $\lc D\rc$ is an effective $\Z$-divisor since $D$ is nef. This adjoint divisor takes the form $\Delta-\text{effective divisor}$ and so is either noneffective or trivial. If noneffective we are done from $h^2(D)=h^0(K_Y-\lc D\rc+\Delta)=0$, which leaves the case when the adjoint divisor is trivial. In that case $\lf D\rf=K_Y$ and so $\chi(D)=\chi(\mO_Y)$ but by the assumption $k>0$ we have $\chi(D)>\chi(\mO_Y)$ giving a contradiction.

The previous argument covers smooth toric surfaces. For singular toric surfaces we can use a combinatorial method to achieve the same result, which we now outline. For nef $\Q$-divisors $\chi(D)=h^0(D)=L_{P(D)}:=\#P(D)\cap\Z^2$. Consider the $\Z$-divisor $\lf D\rf$ whose hyperplane arrangement is obtained by translating the facet hyperplanes of $P(D)$ along their inward normal directions until they include a lattice point. This divisor may not be nef; combinatorially, there may be configurations of edges in $P(D)$ such as that pictured in Fig.~\ref{fig:floor}(a) so that $\mA(\lf D\rf)$ looks like Fig.~\ref{fig:floor}(b). Let $E$ be the divisor corresponding to the hyperplane in $\mA(\lf D\rf)$ that does not meet the positive region as shown in Fig.~\ref{fig:floor}(b). By subtracting $mE$ where $m$ is the smallest positive integer such that the corresponding translated hyperplane meets $\mA(\lf D\rf)$ we obtain the polytope depicted in Fig.~\ref{fig:floor}(c). This clearly does not affect the index and reduces the area, hence producing a preferable $\Z$-divisor. Repeating this process -- and iterating if necessary -- produces a preferable nef $\Z$-divisor to $D$.
\end{proof}

\begin{figure}[h]
\caption{$\lf D\rf$ not nef}
\label{fig:floor}
\begin{center}
\begin{tikzpicture}
\foreach \i in {0,...,2}
	\foreach \j in {0,...,3}
		{ \node (\i \j) at (\i,\j){\tiny $\bullet$};}

\node (a) at (0,2.4){};
\node (b) at (1.2,3){};
\node (c) at (1.6,2.8){};
\node (d) at (1.8,0){};

\draw (a.center) to (b.center);
\draw (c.center) to (d.center);
\draw (1,3) to (2,2);

\node (l) at (1,-0.5){$(a)$}; 

\foreach \i in {0,...,2}
	\foreach \j in {0,...,3}
		{ \node (\i \j) at (\i+4,\j){\tiny $\bullet$};}

\node (a) at (4+0,2){};
\node (b) at (4+1.2,2.6){};
\node (c) at (4+0.95,2.8){};
\node (d) at (4+1.15,0){};

\draw (a.center) to (b.center);
\draw (c.center) to (d.center);
\draw (5,3) to (6,2);

\node (l') at (5,-0.5){$(b)$};

\foreach \i in {0,...,2}
	\foreach \j in {0,...,3}
		{ \node (\i \j) at (\i+8,\j){\tiny $\bullet$};}

\node (a) at (8+0,2){};
\node (b) at (8+1.2,2.6){};
\node (c) at (8+0.95,2.8){};
\node (d) at (8+1.15,0){};

\draw (a.center) to (b.center);
\draw (c.center) to (d.center);
\draw (8,3) to (10,1);

\node (l'') at (9,-0.5){$(c)$};
\end{tikzpicture}
\end{center}
\end{figure}

\subsection{Properties} \label{sec:properties}

We list some of the essential properties of algebraic capacities, noticeably parallelling properties for symplectic capacities \cite[\S2]{qsg}.

\begin{prop} \label{prop:cap_prop} The algebraic capacities $\calg_k(Y,A)$ satisfy the following properties:
\begin{itemize}
\item (Conformality) For $q\in\R_{>0}$, $\calg_k(Y,qA)=q\cdot\calg_k(Y,A)$
\item (Disjoint Union) For two polarised surfaces $(Y_1,A_1)$ and $(Y_2,A_2)$ we have
$$\calg_k(Y_1\amalg Y_2,A_1\amalg A_2)=\on{max}\{\calg_{k_1}(Y_1,A_1)+\calg_{k_2}(Y_2,A_2):k_1+k_2=k\}$$
\item (Zero) $\calg_0(Y,A)=0$.
\end{itemize}
\end{prop}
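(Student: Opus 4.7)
The plan is to verify each of the three properties directly from Def.~\ref{def:alg_cap}, exploiting how nefness, intersection products, and Euler characteristics behave under the relevant operations. \emph{Conformality} is essentially immediate: the constraint set $\{D \in \Nefqc(Y) : \chi(D) \geq k + \chi(\mO_Y)\}$ does not depend on $A$, and the objective satisfies $D \cdot (qA) = q(D \cdot A)$, so one can factor $q$ out of the infimum. For \emph{Zero}, the divisor $D = 0$ is nef and satisfies $\chi(0) = \chi(\mO_Y)$ with $0 \cdot A = 0$, yielding $\calg_0(Y, A) \leq 0$; the reverse inequality follows from the fact that for any nef $D$ and any big $A$, written as $H + E$ with $H$ ample and $E$ effective, $D \cdot A = D \cdot H + D \cdot E \geq 0$ by Kleiman's criterion and since nef divisors meet effective cycles nonnegatively.

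For the \emph{Disjoint Union}, the key point is that every coherent sheaf on $Y_1 \amalg Y_2$ splits as a pair of sheaves on the components, and any nef $\Q$-Cartier $\Z$-divisor $D$ on $Y_1 \amalg Y_2$ decomposes uniquely as $D_1 \amalg D_2$ with each $D_i$ nef $\Q$-Cartier on $Y_i$. Both the Euler characteristic and the intersection pairing split additively, so Def.~\ref{def:alg_cap} becomes
$$
\calg_k(Y_1 \amalg Y_2, A_1 \amalg A_2) = \inf_{(D_1,D_2)}\bigl\{D_1 \cdot A_1 + D_2 \cdot A_2 : \chi(D_1) + \chi(D_2) \geq k + \chi(\mO_{Y_1}) + \chi(\mO_{Y_2})\bigr\}.
$$
Reindexing by $k_i := \chi(D_i) - \chi(\mO_{Y_i})$ converts the constraint to $k_1 + k_2 \geq k$, and separating the infimum into the two components produces an outer infimum of $\calg_{k_1}(Y_1, A_1) + \calg_{k_2}(Y_2, A_2)$ over integer pairs $(k_1, k_2)$ with $k_1 + k_2 \geq k$. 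Monotonicity of $\calg_j(Y_i, A_i)$ in $j$, which is clear from the definition since the constraint set shrinks as $j$ grows, lets one reduce to the boundary $k_1 + k_2 = k$.

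The step I expect to require the most care is pinning down the direction of the extremum in the Disjoint Union formula: the decomposition argument naturally yields an $\operatorname{min}$ over splittings $k_1 + k_2 = k$, whereas the parallel ECH property is an $\operatorname{max}$. This is consistent with $\calg_k$ being defined by an infimum against a lower bound on $\chi$, and I would expect the statement in the proposition to be read with $\operatorname{min}$ in place of $\operatorname{max}$. Beyond this bookkeeping point, all three properties follow formally from additivity of $\chi$ and of the intersection pairing across connected components, together with the existence of an optimiser guaranteed by Lem.~\ref{lem:alg_attain}.
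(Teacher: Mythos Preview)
The paper omits the proof entirely, remarking only that ``the proofs are purely numerical in nature and so we omit them.'' Your verifications of Conformality and Zero are correct and more explicit than anything in the paper.

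Your observation about the Disjoint Union formula is also correct and worth emphasising: the decomposition you carry out genuinely yields
\[
\calg_k(Y_1\amalg Y_2,A_1\amalg A_2)=\min_{k_1+k_2=k}\bigl\{\calg_{k_1}(Y_1,A_1)+\calg_{k_2}(Y_2,A_2)\bigr\},
\]
not the $\max$ printed in the proposition. A small check confirms this: with $Y_1=Y_2=\P^2$ and $A_i=\mO(1)$ one has $\calg_0(\P^2,\mO(1))=0$ and $\calg_1(\P^2,\mO(1))=\calg_2(\P^2,\mO(1))=1$, while on the disjoint union the pair $(D_1,D_2)=(H,0)$ gives $\chi(H)+\chi(0)=3+1=4=k+\chi(\mO_{Y_1})+\chi(\mO_{Y_2})$ for $k=2$ with $D\cdot A=1$, so $\calg_2(Y_1\amalg Y_2,A_1\amalg A_2)=1$. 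This matches the $\min$ of $\{0+1,\,1+1,\,1+0\}$ and not the $\max$. The $\max$ in the stated proposition appears to have been transcribed from the ECH disjoint-union axiom; for $\calg_k$ as defined via an infimum, your $\min$ is the correct formula.
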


The proofs are purely numerical in nature and so we omit them. Perhaps the most important property in the setting of symplectic geometry is `monotonicity', where symplectic capacities are required to increase with embeddings. Another way of saying this is that symplectic capacities are functors from a category of symplectic manifolds with morphisms given by symplectic embeddings into the poset $(\R,\leq)$. An analog of monotonicity for algebraic capacities of toric surfaces is developed in \cite[\S4]{cw}.

\section{Continuity for algebraic capacities} \label{sec:continuity}

\subsection{Chamber decompositions and continuity on the big cone} \label{sec:chamber}

\begin{prop} \label{prop:chamber} Let $Y$ be a projective $\Q$-factorial surface. For each $k\in\Z_{\geq0}$ there is a locally finite chamber decomposition of the big cone of $Y$
$$\on{Big}(Y)=\bigcup_{D}\ol{\mfk{C}}_D$$
such that
$$\mfk{C}_D=\{A\in\on{Big}(Y):\text{$D$ is the unique optimiser for $\calg_k(Y,A)$}\}\subseteq\on{Big}(Y)$$
so that the chambers are indexed by optimal divisors for $\calg_k(Y,A)$ as $A$ varies.
\end{prop}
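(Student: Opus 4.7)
\medskip
\noindent\textbf{Proof plan.}
The plan is to build the chambers directly from the definition, prove local finiteness via Kleiman's criterion, and then use a generic perturbation argument to cover $\on{Big}(Y)$ by the closures. Write
$$\mathcal{F}_k := \{D\in\Nefqc(Y) : \chi(D)\geq k+\chi(\mathcal{O}_Y)\}$$
for the admissible optimisers. For each $D\in\mathcal{F}_k$ define $\mfk{C}_D$ exactly as in the statement. Each $\mfk{C}_D$ is cut out of $\on{Big}(Y)$ by the (a priori infinite) family of strict linear inequalities $\{D\cdot A < D'\cdot A : D'\in\mathcal{F}_k,\ D'\neq D\}$, so once local finiteness is established it will automatically be open and convex (an intersection of open half-spaces).

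The main obstacle is local finiteness of the decomposition. Fix a compact subset $K\subseteq\on{Big}(Y)$. Since $K$ lies in the open cone $\on{Big}(Y)$, we may choose an ample $\Q$-Cartier class $A_0$ and a constant $\lambda>0$ such that $A-\lambda A_0$ is pseudo-effective for every $A\in K$; on a surface, intersecting with a nef class $D$ gives $D\cdot A\geq \lambda D\cdot A_0$. Fixing any single candidate $D_0\in\mathcal{F}_k$, the function $A\mapsto D_0\cdot A$ is bounded on $K$, say by $M$, so only those $D\in\mathcal{F}_k$ with $D\cdot A\leq M$ for some $A\in K$ can possibly be optimisers over $K$; such $D$ satisfy $D\cdot A_0\leq M/\lambda$. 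By Kleiman's criterion the slice $\{D\in\on{Nef}(Y)_\R : D\cdot A_0\leq M/\lambda\}$ is compact, and intersecting with the discrete set $\Nefqc(Y)$ (the same argument that gives Lem.~\ref{lem:alg_attain}) leaves only finitely many candidates. Hence only finitely many chambers meet $K$, and on $K$ each $\mfk{C}_D$ is cut out by finitely many strict linear inequalities.

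Next I would show that $\on{Big}(Y)=\bigcup_D\ol{\mfk{C}}_D$. Fix $A\in\on{Big}(Y)$; by local finiteness and Lem.~\ref{lem:alg_attain} the set $\{D_1,\dots,D_r\}\subseteq\mathcal{F}_k$ of optimisers for $A$ is finite and every non-optimiser $D'$ in a neighbourhood of $A$ satisfies $D'\cdot A > D_i\cdot A$ strictly. Choose a vector $v\in\on{NS}(Y)_\R$ avoiding each of the finitely many hyperplanes $\{(D_i-D_j)\cdot(\cdot)=0\}$ for $i\neq j$; among $D_1,\dots,D_r$ there is then a unique index $i_0$ minimising $D_i\cdot v$. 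For small $\eps>0$ the point $A_\eps := A+\eps v$ still lies in $\on{Big}(Y)$, satisfies $D_{i_0}\cdot A_\eps < D_j\cdot A_\eps$ for $j\neq i_0$, and by continuity and local finiteness remains below all non-optimisers of $A$. Thus $D_{i_0}$ is the unique optimiser for $A_\eps$, giving $A_\eps\in\mfk{C}_{D_{i_0}}$ and hence $A\in\ol{\mfk{C}}_{D_{i_0}}$.

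Finally, on $\ol{\mfk{C}}_D$ the function $\calg_k(Y,\cdot)$ equals the linear functional $A\mapsto D\cdot A$, so the decomposition witnesses the piecewise-linear structure promised in the combined theorem of \S\ref{sec:chamber}; continuity on $\on{Big}(Y)$ (the content of Cor.~\ref{cor:cap_cont}) then follows because the values on adjacent chambers agree on the common wall by the tie-breaking argument above. The step I expect to be the most delicate is local finiteness, since it is what ensures that the seemingly infinite collection of linear inequalities defining each $\mfk{C}_D$ reduces locally to finitely many and that perturbation arguments like the one covering $\on{Big}(Y)$ succeed.
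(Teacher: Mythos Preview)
Your proposal is correct and follows essentially the same approach as the paper: both rely on Kleiman's criterion to show that only finitely many integral nef classes can compete as optimisers over a compact piece of $\on{Big}(Y)$, which is exactly the content of local finiteness. Your organisation differs slightly---you establish local finiteness first via the clean pseudo-effectivity bound $D\cdot A\geq\lambda\, D\cdot A_0$ and then deduce both openness of the chambers and the covering $\on{Big}(Y)=\bigcup_D\ol{\mfk{C}}_D$ by an explicit tie-breaking perturbation, whereas the paper argues openness separately by a sequential compactness contradiction and treats the covering more implicitly---but the underlying ideas are the same.
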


\begin{proof} Let $A$ be a generic big $\R$-divisor such that $\calg_k(Y,A)$ has a unique optimal nef $\Z$-divisor $D_0$. There exists an open neighbourhood $U$ of $A$ such that $D_0$ is optimal for $\calg_k(Y,A')$ for all $A'\in U$. Indeed, suppose there is no such neighbourhood. Then we can find a sequence $A_n\to A$ such that $\calg_k(Y,A_n)<D_0\cdot A_n$ for all $n$. Let $D_n$ be a $\Z$-divisor optimising $\calg_k(Y,A_n)$. Pick $D'$ such that $I(D')\geq 2k$. All the divisors $D_n$ will lie in the compact region of the nef cone $\{D\in\on{Nef}(Y):D\cdot A_n\leq M\}$ where $M=\sup\{D'\cdot A_n:n\in\Z_{\geq0}\}$. There are hence finitely many distinct divisors $D_n$ and so the sequence has a subsequence $D_{n_i}$ that stabilises at some $\Z$-divisor $D_*$. It follows that $\calg_k(Y,A_{n_i})=D_*\cdot A_{n_i}$, which converges to $D_*\cdot A$ as $i\to\infty$. We have $D_0\cdot A=\calg_k(Y,A)\leq D_*\cdot A$ and $D_*\cdot A_{n_i}=\calg_k(Y,A_{n_i})<D_0\cdot A_{n_i}$ so that $D_*\cdot A\leq D_0\cdot A$. Hence $D_*\cdot A=D_0\cdot A=\calg_k(Y,A)$ and so $D_0=D_*$. This contradicts our construction of the $A_n$.

For local finiteness, we will show that the chamber structure is finite inside each closed set $R_{A_0,\delta}$ defined as the subset of the big cone bounded by $\partial R_{A_0,\delta}=\{D+\delta A_0:D\in\partial\on{\ol{NE}}(Y)\}$ where $A_0$ is an ample divisor and $\delta>0$. Note that the chambers $\mfk{C}_D$ are cones since if a divisor $D$ is optimal for $\calg_k(Y,A)$ then it is also optimal for $\calg_k(Y,qA)$ for any $q\in\R_{>0}$. It thus suffices to check local finiteness on a bounded region of the big cone. Pick an ample $\R$-divisor $A_0$ and consider the region $S$ of the big cone consisting of all big $\R$-divisors $A$ in $R_{A_0,\delta}$ such that $1-\eps\leq A\cdot A_0\leq 1+\eps$ for some small $\eps>0$. Pick a nef $\Z$-divisor $D_0$ with $\chi(D_0)\geq k+\chi(\mO_Y)$. There exists $M\in\R_{\geq0}$ such that $A\cdot D_0\leq M$ for all $A\in S$. It follows that $\calg_k(Y,A)\leq M$ for all $A\in S$ and so optimisers for $\calg_k(Y,A)$ when $A\in R_{A_0,\delta}$ must be in the set
$$K=\{D\in\on{Nef}(Y)_\Z:D\cdot A\leq M\text{ for some $A\in R_{A_0,\delta}$}\}$$
$K$ is compact since $R_{A_0,\delta}$ is compact and bounded away from the boundary of the big cone and so there are finitely many $\Z$-divisors in $K$, which implies that there are only finitely many chambers in $R_{A_0,\delta}$.
\end{proof}

\begin{cor} \label{cor:cap_cont} For any projective $\Q$-factorial surface $Y$ we have that $A\mapsto\calg_k(Y,A)$ is continuous as a function $\on{Big}(Y)\to\R$ for each $k\in\Z_{\geq0}$.
\end{cor}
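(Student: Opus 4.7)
The plan is to deduce continuity directly from the chamber decomposition of Prop.~\ref{prop:chamber}. On each chamber $\mfk{C}_D$ we have $\calg_k(Y,A)=D\cdot A$, which is a linear function of $A$; by the same token, on the closed cone $\ol{\mfk{C}}_D$ we have $\calg_k(Y,A)\leq D\cdot A$ with equality whenever $D$ is still an optimiser for $A$. So the content of continuity is entirely about matching values across walls, for which local finiteness is the key tool.

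First I would fix $A\in\on{Big}(Y)$ and a sequence $A_n\to A$, and use local finiteness to choose an open neighbourhood $U$ of $A$ meeting only finitely many chambers $\mfk{C}_{D_1},\ldots,\mfk{C}_{D_r}$. Eventually $A_n\in U$, so after discarding a finite initial segment each $A_n$ lies in some $\ol{\mfk{C}}_{D_{i(n)}}$ with $i(n)\in\{1,\ldots,r\}$. The pigeonhole principle then lets me split $\{A_n\}$ into finitely many infinite subsequences, each contained in a single $\ol{\mfk{C}}_{D_i}$. It therefore suffices to check that along each such subsequence the function values converge to $\calg_k(Y,A)$.

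Next I would argue that if $A_n\to A$ with all $A_n\in\ol{\mfk{C}}_{D_i}$, then $\calg_k(Y,A_n)=D_i\cdot A_n$. This in turn relies on the observation that $D_i$ remains optimal on the closure of its chamber: indeed, for any competitor $D'$ with $\chi(D')\geq k+\chi(\mO_Y)$ and any sequence $B_m\to A_n$ with $B_m\in\mfk{C}_{D_i}$, we have $D_i\cdot B_m\leq D'\cdot B_m$, and passing to the limit gives $D_i\cdot A_n\leq D'\cdot A_n$. The same limiting argument applied at $A$ itself (using that $\ol{\mfk{C}}_{D_i}$ is closed, so $A\in\ol{\mfk{C}}_{D_i}$) shows $D_i\cdot A=\calg_k(Y,A)$. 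Hence
\[
\calg_k(Y,A_n)=D_i\cdot A_n\longrightarrow D_i\cdot A=\calg_k(Y,A),
\]
which is the desired convergence along the subsequence; combining across the finitely many subsequences yields $\calg_k(Y,A_n)\to\calg_k(Y,A)$.

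The only subtlety, and the step I would flag as most delicate, is the verification that membership of $A$ in $\ol{\mfk{C}}_{D_i}$ really forces $D_i$ to be an optimiser at $A$ (rather than merely an asymptotic optimiser). This is essentially a semicontinuity statement for the infimum and it is baked into the construction of the chambers in Prop.~\ref{prop:chamber}: the closure of the locus where $D_i$ is the unique optimiser is precisely the locus where $D_i$ is some optimiser, again by the compactness argument in the proof of that proposition. Once that identification is in hand, continuity drops out of linearity on each closed chamber.
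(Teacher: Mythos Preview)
Your proof is correct and follows essentially the same approach as the paper: both deduce continuity from the locally finite chamber decomposition of Prop.~\ref{prop:chamber}, using that $\calg_k(Y,\cdot)$ is linear on each chamber and that the chamber divisor $D_i$ remains an optimiser on the closure $\ol{\mfk{C}}_{D_i}$. The paper phrases this last point as ``walls are where both adjacent divisors are optimal,'' whereas you verify it via a limiting argument and then run a sequential continuity check with pigeonhole; the underlying content is the same.
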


\begin{proof} Write $c(A)=\calg_k(Y,A)$. Inside each chamber $\mfk{C}_D$ we have that $c|_{\mfk{C}_D}(A)=D\cdot A$, which is continuous. A wall
$$\mfk{w}=\ol{\mfk{C}}_D\cap\ol{\mfk{C}}_{D'}$$
separating chambers is given by the locus inside $\on{Big}(Y)$ where $\calg_k(Y,A)$ has a nonunique optimiser -- in this case, where both $D$ and $D'$ are optimal -- and so we see that $c$ remains continuous when restricted to $\mfk{C}_D\cup\mfk{C}_{D'}\cup(\ol{\mfk{C}}_D\cap\ol{\mfk{C}}_{D'})$, which gives the result.
\end{proof}

\begin{example} We illustrate this chamber decomposition when $Y$ is the blowup of $\pr^2$ in a point; that is, the Hirzebruch surface $\bF_1$. We use the $\Z$-basis for $\on{NS}(Y)_\Z$ given by $F,D_\infty$ where $F$ is a fibre class and $D_\infty$ is a curve of self-intersection $1$. The nef cone is generated by $F$ and $D_\infty$ and so we see that the only possible optimisers for $\calg_1(Y,A)$ are $F$ and $D_\infty$. The effective cone of $Y$ is spanned by $F$ and $D_\infty-F$ so let $A=\alpha F+\beta(D_\infty-F)$ be a general big $\R$-divisor. We have
$$F\cdot A=\beta\text{ and }D_\infty\cdot A=\alpha$$
so that $F$ is preferable when $\alpha\geq\beta$ and $D_\infty$ is preferable when $\alpha\leq\beta$. The chamber decomposition for $k=1$ is shown in Fig.~\ref{fig:big1}(a). Similarly, we see that the only possible optimisers for $\calg_2(Y,A)$ are $2F$ and $D_\infty$ giving the chamber decomposition in Fig.~\ref{fig:big1}(b). The three possible optimisers for $\calg_3(Y,A)$ are $3F,D_\infty+F,2D_\infty$ and this induces the chamber decomposition in Fig.~\ref{fig:big1}(c).
\begin{figure}[h]
\caption{Chamber decompositions for $\on{Big}(\bF_1)$}
\label{fig:big1}
\begin{center}
\begin{tikzpicture}
\small
\node (o) at (0,0){$\bullet$};
\node (a) at (0,2){};
\node (b) at (1.7,-1.7){};
\node (c) at (1.7,0){};

\draw (o.center) to (a);
\draw (o.center) to (b);
\draw (o.center) to (c);

\node (lf) at (1,1){$\mfk{C}_F$};
\node (ld) at (1.3,-0.6){$\mfk{C}_{D_\infty}$};

\node (l) at (1,-2){(a)};

\node (o) at (4+0,0){$\bullet$};
\node (a) at (4+0,2){};
\node (b) at (4+1.7,-1.7){};
\node (c) at (4+1.7,1.7){};

\draw (o.center) to (a);
\draw (o.center) to (b);
\draw (o.center) to (c);

\node (lf) at (4+0.7,1.4){$\mfk{C}_{2F}$};
\node (ld) at (4+1.3,0){$\mfk{C}_{D_\infty}$};

\node (l) at (4+1,-2){(b)};

\node (o) at (8+0,0){$\bullet$};
\node (a) at (8+0,2){};
\node (b) at (8+1.7,-1.7){};
\node (c) at (8+1.7,1.7){};
\node (d) at (8+1.7,0){};

\draw (o.center) to (a);
\draw (o.center) to (b);
\draw (o.center) to (c);
\draw (o.center) to (d);

\node (lf) at (8+0.7,1.4){$\mfk{C}_{3F}$};
\node (ld) at (8+1.3,0.4){$\mfk{C}_{D_\infty+F}$};
\node (ld) at (8+1.3,-0.5){$\mfk{C}_{2D_\infty}$};

\node (l) at (8+1,-2){(c)};
\end{tikzpicture}
\end{center}
\end{figure}
\end{example}

\subsection{Continuity on the boundary of the nef cone} \label{sec:degen}

Let $Y$ be a smooth surface with a birational morphism $\pi\colon Y\to\ol{Y}$. We relate the algebraic capacities of the possibly singular surface $\ol{Y}$ equipped with a big and nef $\R$-divisor $\ol{A}$ to the algebraic capacities of $Y$ with the big and nef divisor $\pi^*\ol{A}$. We start with the easiest case.

\begin{prop} \label{prop:blowup} Suppose $Y$ is a smooth surface with $E\subseteq Y$ a $(-1)$-curve. Let $\pi\colon Y\to\ol{Y}$ be the contraction of $E$. Then
$$\calg_k(Y,\pi^*\ol{A})=\calg_k(\ol{Y},\ol{A})$$
\end{prop}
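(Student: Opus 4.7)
The plan is to build a correspondence between admissible divisors for the two optimisation problems via pullback and pushforward along $\pi$, showing that each operation preserves (or can only improve) the index constraint while preserving the intersection with $\pi^*\ol{A}$. The key ingredients are the projection formula, the identity $R^{\bullet}\pi_*\mO_Y=\mO_{\ol{Y}}$ (in particular $\chi(\mO_Y)=\chi(\mO_{\ol{Y}})$, and $\ol{Y}$ is smooth by Castelnuovo), and the adjunction relations $E^2=E\cdot K_Y=-1$.

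First I would handle the easy direction $\calg_k(Y,\pi^*\ol{A})\le\calg_k(\ol{Y},\ol{A})$. Pick an optimiser $\ol{D}$ for $\calg_k(\ol{Y},\ol{A})$ (which exists by Lem.~\ref{lem:alg_attain}). The pullback $\pi^*\ol{D}$ is a nef $\Z$-divisor on $Y$ with $\pi^*\ol{D}\cdot\pi^*\ol{A}=\ol{D}\cdot\ol{A}$ by the projection formula, and $\chi(\pi^*\ol{D})=\chi(\ol{D})$ from the Leray spectral sequence. Hence $\pi^*\ol{D}$ is admissible for $\calg_k(Y,\pi^*\ol{A})$ with the same objective value.

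For the reverse inequality, take an optimiser $D$ for $\calg_k(Y,\pi^*\ol{A})$. Using the orthogonal decomposition $\on{NS}(Y)_\Z\cong\pi^*\on{NS}(\ol{Y})_\Z\oplus\Z\langle E\rangle$, I would write $D=\pi^*\ol{D}-nE$ where $\ol{D}:=\pi_*D$; since $D\cdot E\geq 0$, $E^2=-1$, and $\pi^*\ol{D}\cdot E=0$, one gets $n\geq 0$. I would then verify three properties of $\ol{D}$: (a) nefness on $\ol{Y}$, by testing against an arbitrary irreducible curve $\ol{C}\subset\ol{Y}$: writing $\pi^*\ol{C}=C+aE$ with $C$ the strict transform (so $C\ne E$) and $a=\operatorname{mult}_p\ol{C}\geq 0$, one obtains $\ol{D}\cdot\ol{C}=\pi^*\ol{D}\cdot C=(D+nE)\cdot C\geq 0$; (b) the index bound, via a Riemann--Roch calculation using $K_Y=\pi^*K_{\ol{Y}}+E$ and $E\cdot K_Y=-1$, which yields $\chi(D)=\chi(\ol{D})-\binom{n+1}{2}$ and therefore $\chi(\ol{D})\geq\chi(D)\geq k+\chi(\mO_{\ol{Y}})$; (c) preservation of the objective, $\ol{D}\cdot\ol{A}=D\cdot\pi^*\ol{A}$, again by the projection formula and $E\cdot\pi^*\ol{A}=0$. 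Together these make $\ol{D}$ admissible for $\calg_k(\ol{Y},\ol{A})$ with the same value as $D$ attains on the $Y$-side, finishing the inequality.

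The main obstacle is step (a): showing that the pushforward of a nef divisor along a $(-1)$-curve contraction remains nef. This is where the specific geometry of Castelnuovo contraction is essential, both in keeping $\ol{Y}$ smooth (so that the clean splitting of $\on{NS}(Y)_\Z$ is available and every Weil divisor on $\ol{Y}$ is Cartier) and in ensuring that strict transforms of curves on $\ol{Y}$ are genuinely distinct from $E$. Everything else is a direct consequence of standard intersection theory on smooth surfaces.
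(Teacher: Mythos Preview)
Your proof is correct and follows essentially the same route as the paper's: pull back an optimiser for one inequality, push forward (via the splitting $\on{NS}(Y)_\Z\cong\pi^*\on{NS}(\ol{Y})_\Z\oplus\Z E$) for the other, and compare indices. The paper argues via $I(\cdot)$ rather than $\chi(\cdot)$ and is terser; in particular it silently uses that the pushforward $\ol{D'}=\pi_*D'$ is nef, whereas you supply an explicit verification of this in step~(a), which is a genuine (if routine) point the paper leaves implicit.
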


\begin{proof} 
Let $\ol{D}$ be a $\Z$-divisor optimising $\calg_k(\ol{Y},\ol{A})$. As $\pi$ is the contraction of a $(-1)$-curve $\pi^*\ol{D}$ is a $\Z$-divisor and $I(\pi^*\ol{D})=I(\ol{D})\geq 2k$ giving
$$\calg_k(Y,A_\alpha)\leq\pi^*\ol{D}\cdot A_\alpha=\ol{D}\cdot\ol{A}=\calg_k(\ol{Y},\ol{A})$$
Let $D'$ optimise $\calg_k(Y,\pi^*\ol{A})$ and write $D'=\pi^*\ol{D'}-mE$ for some $m\in\Z_{\geq0}$. Then we have $2k\leq I(D')\leq I(\pi^*\ol{D'})$ and so
$$\calg_k(Y,\pi^*\ol{A})=D'\cdot\pi^*\ol{A}=\ol{D'}\cdot\ol{A}\geq\calg_k(\ol{Y},\ol{A})$$
as required.
\end{proof}

Note that this implies that optimisers for $\calg_k(Y,\pi^*\ol{A})$ can be chosen on the face of the nef cone corresponding to $\pi$.

In general, when the contraction $\ol{Y}$ is singular there is additional complexity in relating $\chi(D)$ and $\chi(\ol{D})$ since the index formulation of $\calg_k(\ol{Y},\ol{A})$ that we used is no longer valid. The additional methods available for toric surfaces enable us to extend to singular contractions in the toric context. The main extra fact we use is that for a birational morphism $\pi\colon Y\to\ol{Y}$ and a nef $\Q$-divisor $\ol{D}$ on $\ol{Y}$ we have $\chi(\ol{D})=h^0(\ol{D})=h^0(\pi^*\ol{D})=\chi(\pi^*\ol{D})$ from Demazure vanishing since both $\ol{D}$ and $\pi^*\ol{D}$ are nef.

\begin{prop} \label{prop:degen_toric} Suppose $Y$ is a toric surface. Let $\pi\colon Y\to\ol{Y}$ be a birational toric morphism and let $\ol{A}$ be an big and nef $\R$-divisor on $\ol{Y}$. Then,
$$\calg_k(Y,\pi^*\ol{A})=\calg_k(\ol{Y},\ol{A})$$
\end{prop}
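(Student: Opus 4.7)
The plan is to establish the two inequalities separately, by pulling back divisors from $\ol{Y}$ to $Y$ and pushing them forward from $Y$ to $\ol{Y}$. The crucial ingredient, recorded just before the statement, is that for any nef $\Q$-divisor $\ol{D}$ on $\ol{Y}$ both $\ol{D}$ and $\pi^*\ol{D}$ are nef, so Demazure vanishing yields $\chi(\ol{D}) = \chi(\pi^*\ol{D})$. Throughout I will invoke the $\Q$-divisor formulation $\calg_k(Y,\pi^*\ol{A}) = \calg_k(Y,\pi^*\ol{A})_\Q$ afforded by the corollary to Prop.~\ref{prop:nef_int}, and the fact that toric surfaces are $\Q$-factorial, so the Weil divisors considered below are automatically $\Q$-Cartier.

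For $\calg_k(Y,\pi^*\ol{A}) \leq \calg_k(\ol{Y},\ol{A})$, I would pick a nef $\Z$-divisor $\ol{D}$ optimising $\calg_k(\ol{Y},\ol{A})$ and form $\pi^*\ol{D}$, a nef $\Q$-Cartier $\Q$-divisor on $Y$. The projection formula gives $\pi^*\ol{D}\cdot \pi^*\ol{A} = \ol{D}\cdot\ol{A}$, while the Demazure vanishing identity together with rationality of toric surfaces yields $\chi(\pi^*\ol{D}) = \chi(\ol{D}) \geq k + \chi(\mO_Y)$. Hence $\pi^*\ol{D}$ is a valid candidate for the $\Q$-divisor infimum defining $\calg_k(Y,\pi^*\ol{A})$, which supplies this direction.

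For the reverse inequality, let $D$ be a nef $\Z$-divisor optimising $\calg_k(Y,\pi^*\ol{A})$. Since $h^0(D) \geq k+1 \geq 1$, $D$ is effective, so $\ol{D} := \pi_*D$ is an effective Weil $\Z$-divisor on $\ol{Y}$. The projection formula yields $\ol{D}\cdot\ol{A} = D\cdot \pi^*\ol{A}$, and the identity $\pi_*\mO_Y(D) = \mO_{\ol{Y}}(\ol{D})$ of reflexive sheaves on normal surfaces (they agree on the complement of a codimension-two set, and both are reflexive) gives $h^0(\ol{D}) = h^0(D) \geq k+1$.

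The main obstacle is that $\ol{D}$ need not be nef, so it is not yet directly comparable with optimisers of $\calg_k(\ol{Y},\ol{A})$. To overcome this I would reuse the combinatorial adjustment at the end of the proof of Prop.~\ref{prop:nef_int}: starting from the hyperplane arrangement $\mA(\ol{D})$, one iteratively subtracts a positive multiple of a prime divisor whose supporting hyperplane does not touch the positive region, decreasing the $\ol{A}$-pairing without removing any lattice point from the positive region. The resulting nef $\Z$-divisor $\ol{D}_0$ then satisfies $h^0(\ol{D}_0) \geq k+1$ and $\ol{D}_0 \cdot \ol{A} \leq \ol{D}\cdot\ol{A}$, giving $\calg_k(\ol{Y},\ol{A}) \leq \ol{D}_0\cdot\ol{A} \leq D\cdot \pi^*\ol{A} = \calg_k(Y,\pi^*\ol{A})$ and closing the argument.
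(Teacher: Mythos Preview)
Your overall two-inequality strategy matches the paper's. There is one genuine error and one unnecessary detour.

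The error: your claim that $\pi_*\mO_Y(D)=\mO_{\ol{Y}}(\ol{D})$ as reflexive sheaves is false, because $\pi_*\mO_Y(D)$ need not be reflexive. For instance, with $\pi\colon\on{Bl}_p\P^2\to\P^2$ and the nef divisor $D=H-E$, one has $\pi_*\mO_Y(D)\cong\mathcal{I}_p(1)$, whose reflexive hull is $\mO_{\P^2}(1)$; correspondingly $h^0(\ol{D})=3>2=h^0(D)$. Fortunately only the inequality $h^0(\ol{D})\geq h^0(D)$ is needed, and that holds simply because any $f$ with $\on{div}_Y(f)+D\geq 0$ satisfies $\on{div}_{\ol{Y}}(f)+\ol{D}\geq 0$ after pushforward.

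The detour: your concern that $\ol{D}=\pi_*D$ might fail to be nef is unfounded, so the combinatorial cleanup from Prop.~\ref{prop:nef_int} is never invoked in the paper. Writing $D=\pi^*\ol{D}-\sum m_iE_i$ with $E_i$ exceptional, one has $(\pi^*\ol{D}-D)\cdot E_j=-D\cdot E_j\leq 0$ for each $j$; since the exceptional intersection matrix is negative definite with nonnegative off-diagonal entries, this forces $m_i\geq 0$. Then $\pi^*\ol{D}$ is nef outright: it meets each $E_j$ trivially and any other curve $C$ in $D\cdot C+\sum m_i\,E_i\cdot C\geq 0$. Hence $\ol{D}$ is nef, and Demazure vanishing gives $\chi(\ol{D})=h^0(\pi^*\ol{D})\geq h^0(D)\geq k+1$ directly. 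That is the paper's route, and it is shorter than yours.
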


\begin{proof} We will freely use nef $\Q$-divisors in light of Prop.~\ref{prop:nef_int}. The proof proceeds along the same lines as for Prop.~\ref{prop:blowup}. The same method yields
$$\calg_k(Y,\pi^*\ol{A})\leq\calg_k(\ol{Y},\ol{A})$$
For the converse inequality, let $D$ be an optimal nef $\Z$-divisor for $\calg_k(Y,\pi^*\ol{A})$. Write $D=\pi^*\ol{D}-\sum_{i=1}^s m_iE_i$ where $E_i$ are exceptional divisors for $\pi$ and for $m_i\in\Q_{>0}$. Hence
$$\calg_k(Y,\pi^*\ol{A})=D\cdot\pi^*\ol{A}=\ol{D}\cdot\ol{A}$$
We have $\chi(\ol{D})=\chi(\pi^*\ol{D})=h^0(\pi^*\ol{D})$, which is greater than $h^0(D)=\chi(D)$. We obtain $\calg_k(\ol{Y},\ol{A})\leq\ol{D}\cdot\ol{A}=\calg_k(Y,\pi^*\ol{A})$ as desired.
\end{proof}

\section{Asymptotics for algebraic capacities} \label{sec:asymptotics}

We investigate the asymptotics of algebraic capacities inspired by questions from algebraic positivity \cite{laz1,laz2} and from obstructions to symplectic embeddings coming from the asymptotics of ECH capacities \cite{hu}. We formulate the following conjecture and prove it in many cases.

\begin{conjecture}[Algebraic Weyl Law] \label{conj:awl} Suppose $(Y,A)$ is a pseudo-polarised surface with $A$ a $\Q$-Cartier $\R$-divisor. Then
$$\lim_{k\to\infty}\frac{\calg_k(Y,A)^2}{k}=2A^2$$
\end{conjecture}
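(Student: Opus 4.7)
The plan is to establish the algebraic Weyl law by proving matching upper and lower bounds on $\calg_k(Y,A)/\sqrt{k}$: the upper bound via an explicit family of test divisors built from $A$ itself, and the lower bound via the Hodge index theorem combined with Riemann--Roch.

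For the upper bound $\limsup_{k\to\infty}\calg_k(Y,A)^2/k\leq 2A^2$, I first approximate: since $A$ lies in the big cone, which is open in $\on{NS}(Y)_\R$, and classes of $\Q$-Cartier $\Z$-divisors are dense there, I can find a nef $\Q$-Cartier $\Z$-divisor $A_0$ whose class is arbitrarily close to a positive multiple of $A$ in $\on{NS}(Y)_\R$. Take as test divisors $D_n:=nA_0$. In the smooth case Noether's formula gives
$$\chi(nA_0)=\chi(\mO_Y)+\tfrac{1}{2}\bigl(n^2A_0^2-nA_0\cdot K_Y\bigr),$$
so the constraint $\chi(D_n)\geq k+\chi(\mO_Y)$ is satisfied once $n$ exceeds $\sqrt{2k/A_0^2}+O(1)$. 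Hence $\calg_k(Y,A)\leq nA_0\cdot A$; letting $k\to\infty$ for fixed $A_0$ yields $\limsup_k\calg_k(Y,A)^2/k\leq 2(A_0\cdot A)^2/A_0^2$, and letting $A_0\to A$ produces the desired bound $2A^2$ by continuity of the intersection pairing.

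For the lower bound $\liminf_{k\to\infty}\calg_k(Y,A)^2/k\geq 2A^2$, let $D$ be a nef $\Z$-divisor optimising $\calg_k(Y,A)$, which exists by Lem.~\ref{lem:alg_attain}. Since $A^2>0$, the Hodge index theorem gives $(D\cdot A)^2\geq D^2\cdot A^2$, and Riemann--Roch rearranges $\chi(D)\geq k+\chi(\mO_Y)$ into $D^2\geq 2k+D\cdot K_Y$. Combining,
$$(D\cdot A)^2\geq(2k+D\cdot K_Y)A^2.$$
The essential remaining input is a uniform estimate $D\cdot K_Y\geq -C(D\cdot A)$ for some constant $C=C(Y,A)$ over all nef $D$. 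To obtain this, apply Kodaira's lemma to write $A=A'+E$ with $A'$ ample and $E$ effective, so that $D\cdot A\geq D\cdot A'$ for nef $D$; then Kleiman's criterion makes the slice $\{D\in\on{Nef}(Y)_\R:D\cdot A'=1\}$ compact, and the linear form $D\mapsto D\cdot K_Y$ is bounded on it. Solving the resulting quadratic inequality $x^2+CA^2x-2kA^2\geq 0$ in $x=D\cdot A$ gives $x\geq\sqrt{2kA^2}-\tfrac{1}{2}CA^2+O(k^{-1/2})$, which delivers the liminf bound.

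The hard part is the singular case outside the toric setting, where Noether's formula and the clean Riemann--Roch expression for $\chi(D)$ need modification. In the toric case the argument should translate via Ehrhart theory: $h^0(D)$ equals the number of lattice points in $P(D)$ (or the positive region of $\mA(D)$), and asymptotics like $h^0(nA_0)=n^2\vol{P(A_0)}+O(n)$ play the role of Riemann--Roch, while the Hodge-index step survives as a mixed-volume inequality. For more general mildly singular $Y$, a tempting route is to pass to a resolution $\wt{Y}\to Y$ and compare $\calg_k(Y,A)$ with $\calg_k(\wt{Y},\wt{A})$ using the degeneration results of \S\ref{sec:degen}, but one must then control the singular correction terms in $\chi(D)$ uniformly in $k$ and ensure the Hodge-index bound on $\wt{Y}$ transfers back; I expect this is exactly where the restriction to smooth or toric $Y$ in Thm.~\ref{thm:asy1} arises.
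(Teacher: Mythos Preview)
Note first that the paper does not prove this conjecture in full generality; it establishes it only when $Y$ is smooth (Thm.~\ref{thm:smooth_asy}) or toric (Prop.~\ref{prop:sing_asy}). You correctly identify that the general singular case is where the obstruction lies, so the meaningful comparison is for the smooth case.

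Your argument for smooth $Y$ is correct and takes a genuinely different route from the paper. For the lower bound you invoke the Hodge index \emph{inequality} $(D\cdot A)^2\geq D^2\cdot A^2$ directly, and then control the $D\cdot K_Y$ term by a compactness argument (Kodaira's lemma plus Kleiman's criterion) to obtain $D\cdot K_Y\geq -C(D\cdot A)$ uniformly over nef $D$. The paper instead uses Hodge index to \emph{diagonalise} the intersection form in a basis $e_0=A,e_1,\dots,e_s$, introduces an auxiliary ``asymptotic capacity'' $\casy_k$ optimising over $\R$-divisors, and solves that relaxed problem explicitly in coordinates (Prop.~\ref{prop:lim_asy}) before comparing with $\calg_k$ (Lem.~\ref{lem:alg_asy}). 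Your approach is more elementary and bypasses $\casy_k$ entirely. What the paper's route buys is an explicit description of the near-optimisers as $\sqrt{2k/A^2}\,A$ up to a bounded correction; this structural information is exactly what feeds into the sub-leading asymptotics (Prop.~\ref{prop:recur}, Lem.~\ref{lem:fin_div}, Thm.~\ref{thm:lims_z}), where one needs to know that optimisers eventually take the form $D_i+jA$. Your argument yields the Weyl law cleanly but does not produce this optimiser structure without further work.

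One small point on your upper bound: you justify the approximation of $A$ by nef $\Z$-divisors $A_0$ via openness of the big cone, but what is actually needed is that ample rational classes are dense in the nef cone near $A$; this is fine since $A$ is big and nef, hence in the closure of the ample cone, so ample $\Q$-classes (scaled to integers) approach positive multiples of $A$. For the toric singular case, the paper argues by pulling back along a toric resolution via Prop.~\ref{prop:degen_toric}; your sketch via resolution is the same idea and is correct in that setting.
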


We call this a `Weyl law' after similar asymptotic results in the contexts of symplectic and Riemannian geometry; see \cite[Thm.~1.1]{asy1} and \cite[\S1.1]{lmn}.

\subsection{Asymptotics for smooth surfaces}

We start by establishing Conj.~\ref{conj:awl} for smooth surfaces.

\begin{thm} \label{thm:smooth_asy} Suppose $(Y,A)$ is a smooth pseudo-polarised surface. Then
$$\lim_{k\to\infty}\frac{\calg_k(Y,A)^2}{k}=2A^2$$
\end{thm}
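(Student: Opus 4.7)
The plan is to establish the limit by proving matching asymptotic upper and lower bounds on $\calg_k(Y,A)^2/k$. Since $Y$ is smooth, Noether's formula gives $\chi(D) = \chi(\mO_Y) + \tfrac{1}{2}I(D)$, so the defining constraint of $\calg_k(Y,A)$ becomes $I(D) = D^2 - K_Y\cdot D \geq 2k$. The two key tools will be explicit test divisors nearly parallel to $A$ for the upper bound, and the Hodge index theorem applied to an optimiser for the lower bound. Note that $A^2 > 0$ since $A$ is big and nef on a surface.

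For the upper bound, I would first choose an ample $\Z$-divisor $H$ whose direction in $\on{NS}(Y)_\R$ approximates that of $A$. Since $A$ lies in the closure of the ample cone and rational (then integer) ample classes are dense there, for any $\eps>0$ one can arrange $(H\cdot A)^2/H^2 \leq A^2 + \eps$. Taking $n\in\Z$ with $n \approx \sqrt{2k/H^2}$ large enough that $I(nH) = n^2 H^2 - nK_Y\cdot H \geq 2k$, the divisor $nH$ is nef and admissible, so $\calg_k(Y,A) \leq nH\cdot A = \sqrt{2k}\,(H\cdot A)/\sqrt{H^2} + O(1)$. This yields $\limsup_k \calg_k(Y,A)^2/k \leq 2(A^2+\eps)$; sending $\eps\to 0$ gives the upper bound $2A^2$.

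For the lower bound, let $D_k$ be an optimiser, which exists by Lem.~\ref{lem:alg_attain}; so $D_k$ is a nef $\Z$-divisor with $I(D_k)\geq 2k$ and $D_k\cdot A = \calg_k(Y,A)$. The Hodge index theorem gives
$$(D_k\cdot A)^2 \;\geq\; D_k^2 \cdot A^2 \;\geq\; A^2\bigl(2k + K_Y\cdot D_k\bigr).$$
It remains to show $K_Y\cdot D_k = O(\sqrt{k})$. I would decompose $K_Y = \alpha A + K'$ in $\on{NS}(Y)_\R$ with $K'\cdot A = 0$; the term $\alpha(A\cdot D_k)$ is already $O(\sqrt{k})$ by the upper bound. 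Since $A^2 > 0$, the orthogonal complement $A^\perp \subset \on{NS}(Y)_\R$ is negative definite by Hodge index. Writing $D_k = \beta_k A + D'_k$ with $D'_k \cdot A = 0$ and $\beta_k = (D_k\cdot A)/A^2$, Cauchy--Schwarz on $A^\perp$ gives $|K'\cdot D_k| = |K'\cdot D'_k| \leq \sqrt{-(K')^2}\cdot\sqrt{-(D'_k)^2}$. Since $D_k$ is nef, $D_k^2 \geq 0$, so $-(D'_k)^2 = \beta_k^2 A^2 - D_k^2 \leq \beta_k^2 A^2 = (D_k\cdot A)^2/A^2 = O(k)$, and hence $K'\cdot D_k = O(\sqrt{k})$. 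Combining the pieces, $(D_k\cdot A)^2 \geq 2A^2 k - O(\sqrt{k})$, which matches the upper bound after dividing by $k$ and taking $k\to\infty$.

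The main obstacle I expect is bookkeeping around the integrality constraint in the upper bound: one must verify that rounding $n$ to an integer and approximating the $\R$-class $A$ by an integer nef class $H$ only introduce $O(\sqrt{k})$ and $O(1)$ corrections (and that nefness of $H$ is preserved when $A$ lies on the boundary of the ample cone, which is handled by first approximating in the interior and then scaling to clear denominators). Once these approximation estimates are set up carefully, the Hodge-theoretic lower bound argument is essentially mechanical, and the two halves of the proof close together to yield $2A^2$.
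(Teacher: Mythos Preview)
Your proposal is correct. Both your argument and the paper's rest on the Hodge index theorem, but the packaging differs. The paper introduces an auxiliary quantity $\casy_k(Y,A)$ (the same optimisation problem over nef $\R$-divisors with the smooth index function $D\cdot(D-K_Y)$), solves that optimisation explicitly in a Hodge basis $e_0=A,e_1,\dots,e_s$ to find that optimisers are of the form $aA+\delta$ with $a\sim\sqrt{2k/A^2}$ and bounded correction $\delta$, and then compares $\casy_k$ with $\calg_k$ by controlling the effect of round-down. Your route is more direct: an ample integer test class $H$ approximating the ray through $A$ gives the $\limsup$ bound, and the reverse Cauchy--Schwarz $(D_k\cdot A)^2\geq A^2\,D_k^2$ together with Cauchy--Schwarz on the negative-definite $A^\perp$ controls $K_Y\cdot D_k=O(\sqrt{k})$ for the $\liminf$. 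Your argument is cleaner for this statement in isolation; the paper's more explicit description of optimisers (as near-multiples of $A$) is what it reuses later for the sub-leading asymptotics and the recursion on optimisers, so its extra machinery earns its keep downstream. One small point worth making precise in your write-up: the approximation step really uses that ample $\Q$-classes are dense in $\on{Nef}(Y)_\R$ and that $(H\cdot A)^2/H^2$ is scale-invariant, so clearing denominators to get an integer $H$ does not disturb the ratio.
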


The key idea is to select a favourable basis to work with. By the Hodge index theorem there exists an $\R$-basis $e_0,e_1,\dots,e_s$ for $\on{NS}(X)_\R$ such that $e_0=A$ and the matrix of the intersection form with respect to this basis is
$$\mat{cccc}
A^2 & 0 & \dots & 0 \\
0 & -r_1 & \dots & 0 \\
\vdots & \vdots & \ddots & 0 \\
0 & 0 & \dots & -r_s
\tam$$
for some $r_i>0$. We will employ this notation throughout this subsection. For our current purposes it will suffice to use the following `asymptotic capacities'
$$\casy_k(Y,A):=\inf_{D\in\on{Nef}(Y)_\R}\{D\cdot A:D\cdot(D-K_Y)\geq 2k\}$$
The only difference here is that we have replaced the index with the intersection of $\R$-divisors. This will allow us to use analytic methods and makes no difference to calculations in the limit.

\begin{prop} \label{prop:lim_asy} For any pseudo-polarised smooth surface $(Y,A)$ we have
$$\lim_{k\to\infty}\frac{\casy_k(Y,A)^2}{k}=2A^2$$
\end{prop}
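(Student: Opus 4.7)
The plan is to establish matching asymptotic upper and lower bounds on $\casy_k(Y,A)$, both of size $\sqrt{2A^2 k}$. The upper bound is obtained by restricting the infimum to the one-parameter family $D_t := tA$ for $t \in \R_{\geq 0}$, each of which is nef since $A$ itself is; the constraint $D_t \cdot (D_t - K_Y) \geq 2k$ becomes the scalar quadratic $t^2 A^2 - t(A \cdot K_Y) \geq 2k$, whose smallest solution is $t = \sqrt{2k/A^2} + O(1)$, yielding $\casy_k(Y,A) \leq tA^2 = \sqrt{2A^2 k} + O(1)$.

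For the lower bound I would exploit the Hodge index basis $e_0 = A, e_1, \dots, e_s$ introduced just before the statement. Any $D \in \on{NS}(Y)_\R$ decomposes uniquely as $D = a_0 A + E$ with $E \in A^\perp$, and similarly $K_Y = k_0 A + K'$. Since the intersection form has signature $(1, s)$, the subspace $A^\perp$ is negative definite, so the form $-({-})\cdot({-})$ restricted there is a genuine inner product and Cauchy--Schwarz gives $|E \cdot K'| \leq \sqrt{-E^2}\,\sqrt{-(K')^2}$. Substituting the identities $D\cdot A = a_0 A^2$, $D^2 = a_0^2 A^2 + E^2$, and $D\cdot K_Y = a_0 k_0 A^2 + E\cdot K'$ into the feasibility constraint $D\cdot(D - K_Y) \geq 2k$, and maximising the $E$-dependent expression $E^2 - E\cdot K'$ over $E \in A^\perp$ via a single-variable completed-square calculation, produces the scalar inequality
\[ a_0^2 A^2 - a_0\,k_0 A^2 + \tfrac{1}{4}\bigl(-(K')^2\bigr) \geq 2k. \]
This forces $a_0 A^2 \geq \sqrt{2A^2 k}\,(1 - o(1))$, i.e., $\casy_k(Y,A) \geq \sqrt{2A^2 k}\,(1 - o(1))$ as $k \to \infty$.

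Dividing through and passing to the limit then gives the claimed $\casy_k(Y,A)^2/k \to 2A^2$. The only minor point of care is the degenerate case $a_0 = 0$, where the Hodge decomposition argument is vacuous: a nef $D \in A^\perp$ has $D^2 \leq 0$ by negative-definiteness, and rescaling $cD$ makes the quadratic term $c^2 D^2 \leq 0$ dominate the linear term $-c D\cdot K_Y$ in the constraint, so such divisors are infeasible for all sufficiently large $k$ and contribute nothing to the asymptotic infimum. I do not foresee any substantive obstacle; the proof is essentially a well-chosen Hodge basis together with the standard Cauchy--Schwarz and complete-the-square manoeuvre, noting also that the lower bound uses only the Hodge index theorem on $\on{NS}(Y)_\R$ rather than nefness of $D$.
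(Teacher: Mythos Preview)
Your argument is correct and follows essentially the same route as the paper: both proofs use the Hodge index decomposition $D = a_0 A + E$ with $E \in A^\perp$, complete the square (or apply Cauchy--Schwarz, which amounts to the same thing) to bound the $E$-contribution $E^2 - E\cdot K'$ by the constant $\tfrac{1}{4}(-(K')^2)$, and read off $a_0 \sim \sqrt{2k/A^2}$ from the resulting scalar quadratic. The paper carries out the minimisation coordinatewise in the diagonal basis $e_1,\dots,e_s$ (setting $b_i = -d_i/2$), whereas you phrase it basis-free via Cauchy--Schwarz; these are equivalent.

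One small point where your write-up is actually cleaner: for the upper bound the paper locates the unconstrained optimiser $D_k = aA + \delta$ and then asserts that a nearby divisor is nef, which is a little delicate when $A$ is merely nef rather than ample (a fixed perturbation $\delta$ orthogonal to $A$ need not become nef after adding large multiples of $A$). Your upper bound via the one-parameter family $D_t = tA$ sidesteps this entirely, since $tA$ is manifestly nef. Conversely, your closing remark that the lower bound ``uses only the Hodge index theorem rather than nefness of $D$'' is not quite accurate: you do need $a_0 \geq 0$ to extract the correct root of the quadratic, and this comes from $D\cdot A \geq 0$ for $D$ nef against the nef divisor $A$. This is implicit in your argument and does not affect its validity. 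The separate discussion of the $a_0 = 0$ case is harmless but unnecessary, as it is already subsumed by the main inequality.
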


\begin{proof} Let $-K_Y=cA+\sum_{i=1}^sd_ie_i$. Suppose $D$ is a divisor with $D\cdot(D-K_Y)\geq 2k$ and write $D=aA+\sum_{i=1}^sb_ie_i$ so that
$$D\cdot(D-K_Y)=a(a+c)A^2-\sum_{i=1}^sr_ib_i(b_i+d_i)\text{ and }D\cdot A=aA^2$$
For $D\cdot(D-K_Y)\geq 2k$ we must have
\begin{equation} \tag{$\ddag$} \label{eqn:opt}
a(a+c)\geq\frac{2k+\sum_{i=1}^sr_ib_i(b_i+d_i)}{A^2}
\end{equation}
Observe that $D\cdot A$ is minimised when $a$ is minimised, which occurs when $\sum_{i=1}^sr_ib_i(b_i+d_i)$ is minimised; that is, when $b_i=-d_i/2$. Indeed, for large $k$ we have that the right hand side of (\ref{eqn:opt}) is large and so the quadratic function $a(a+c)$ achieves that bound for as small a value of $a$ as possible. Namely, approximating $a(a+c)$ by $(a-|c|)^2$ and $(a+|c|)^2$ for large $a$ we obtain
$$-|c|+\sqrt{\frac{2k-\sum_{i=1}^s\frac{r_id_i^2}{4}}{A^2}}\leq a\leq|c|+\sqrt{\frac{2k-\sum_{i=1}^s\frac{r_id_i^2}{4}}{A^2}}$$
and so for optimal $D$ we have $a\sim\sqrt{2k/A^2}$. Let $D_k$ be such an optimal divisor. Note that $D_k+\delta$ is nef since $A$ is nef, and that the asymptotics of $A\cdot(D_k+\delta)$ are the same up to an $O(1)$ error as the asymptotics of $A\cdot D_k$ since the $b_i$ are constant. Thus $\casy_k(Y,A)\sim\sqrt{2A^2k}$ and the result follows.
\end{proof}

The proof of Prop.~\ref{prop:lim_asy} shows that an effective divisor computing $\casy_k(Y,A)$ for large $k$ is of the form $D=aA+\delta$ for some $a\gg0$ and where $\delta=-\frac{1}{2}\sum_{i=1}^sd_ie_i$. We also see that $aA$ is a nef $\R$-divisor that is an approximate optimiser for $\casy_k(Y,A)$ with $O(1)$ error. Using Prop.~\ref{prop:lim_asy} it is immediate that Thm.~\ref{thm:smooth_asy} is a consequence of the following lemma.

\begin{lemma} \label{lem:alg_asy} For any pseudo-polarised smooth surface $(Y,A)$ we have
$$\lim_{k\to\infty}\frac{\calg_k(Y,A)^2}{k}=\lim_{k\to\infty}\frac{\casy_k(Y,A)^2}{k}$$
\end{lemma}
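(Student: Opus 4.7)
Since $Y$ is smooth, Noether's formula yields $\chi(D)=\chi(\mO_Y)+\frac{1}{2}I(D)$, so the constraint $\chi(D)\geq k+\chi(\mO_Y)$ in the definition of $\calg_k(Y,A)$ is exactly $I(D)\geq 2k$. Every nef $\Z$-divisor is a nef $\R$-divisor, so trivially $\casy_k(Y,A)\leq\calg_k(Y,A)$ for all $k$, and hence by Prop.~\ref{prop:lim_asy}
$$\liminf_{k\to\infty}\frac{\calg_k(Y,A)^2}{k}\geq 2A^2.$$
The content of the lemma is therefore the matching asymptotic upper bound $\limsup_{k\to\infty}\calg_k(Y,A)^2/k\leq 2A^2$.

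To obtain it, I plan to manufacture explicit nef $\Z$-divisor competitors for $\calg_k(Y,A)$ by taking integer multiples of a well-chosen ample $\Z$-divisor. Let $B$ be any ample $\Z$-divisor, and for each $k$ let $n_k=n_k(B)$ be the least positive integer with $I(n_kB)=n_k^2B^2-n_kB\cdot K_Y\geq 2k$. Solving the quadratic gives $n_k(B)=\sqrt{2k/B^2}+O(1)$ as $k\to\infty$; since $n_kB$ is a nef $\Z$-divisor of index at least $2k$,
$$\calg_k(Y,A)\leq n_k\,B\cdot A=\frac{B\cdot A}{\sqrt{B^2}}\sqrt{2k}+O(1),$$
from which $\limsup_{k\to\infty}\calg_k(Y,A)^2/k\leq 2(B\cdot A)^2/B^2$ for every such $B$.

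It remains to drive the right-hand side down to $2A^2$ by varying $B$. The ratio $(B\cdot A)^2/B^2$ depends only on the ray of $B$ in $\on{NS}(Y)_\R$ and is continuous where $B^2>0$, with value $A^2$ at the ray of $A$ itself. Ample $\Z$-rays are dense in the ample cone, since the ample cone is open in $\on{NS}(Y)_\R$ and $\Q$-classes are dense, and $A$ lies in the closure of the ample cone because it is nef, so there is a sequence of ample $\Z$-divisors $B_m$ whose rays converge to the ray of $A$. Continuity then forces $(B_m\cdot A)^2/B_m^2\to A^2$, and combining with the previous paragraph yields $\limsup_{k\to\infty}\calg_k(Y,A)^2/k\leq 2A^2$, completing the proof. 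The only mildly delicate step is the density claim when $A$ sits on the boundary of the nef cone; this is handled by first perturbing $A$ by a small positive multiple of a fixed ample class to move inside the ample cone and then approximating rationally.
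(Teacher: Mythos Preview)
Your proof is correct and takes a genuinely different route from the paper's. The paper establishes the upper bound by working directly with the approximate $\R$-optimiser $D_k=\sqrt{2k/A^2}\,A$ furnished by Prop.~\ref{prop:lim_asy}: it rounds down to the $\Z$-divisor $\lfloor D_k\rfloor$, estimates that the index drops by only $O(\sqrt{k})$ in the process (since $A\cdot\Delta_k$ is bounded where $\Delta_k=D_k-\lfloor D_k\rfloor$), and deduces $\calg_k(Y,A)\leq\casy_{k+O(\sqrt{k})}(Y,A)$, after which Prop.~\ref{prop:lim_asy} finishes. Your argument instead fixes an ample $\Z$-divisor $B$, uses the manifestly nef integral competitors $n_kB$ to get $\limsup_k\calg_k(Y,A)^2/k\leq 2(B\cdot A)^2/B^2$, and then lets the ray of $B$ converge to that of $A$ via density of rational rays to push the bound down to $2A^2$. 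The paper's route yields the sharper intermediate inequality $\calg_k\leq\casy_{k+O(\sqrt{k})}$, which is in the spirit of the later sub-leading analysis; your route is more self-contained, needs nothing about the structure of optimisers from Prop.~\ref{prop:lim_asy} beyond the value of the limit, and neatly sidesteps any concern about whether the round-down $\lfloor D_k\rfloor$ is itself nef.
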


\begin{proof} This essentially follows since $D-\lf D\rf$ is small and hence $D\cdot(D-K_Y)$ is not so different to $\lf D\rf\cdot(\lf D\rf-K_Y)$ for large $D$. Suppose $k$ is large. From the observations following Prop.~\ref{prop:lim_asy} we let $D_k=\sqrt{\frac{2k}{A^2}}A$ be an approximate optimiser for the infimum defining $\casy_k(Y,A)$. Note that
$$\Delta_k=\sqrt{\frac{2k}{A^2}}A-\left\lf\sqrt{\frac{2k}{A^2}}A\right\rf$$
is a boundary (i.e.~its coefficients lie in $[0,1)$) for all $k$ and so $A\cdot\Delta_k$ is bounded. To compare $D_k\cdot(D_k-K_Y)$ and $\lf D_k\rf\cdot(\lf D_k\rf-K_Y)$ we compute
\begin{align*}
D_k\cdot(D_k-K_Y)-\lf D_k\rf\cdot(\lf D_k\rf-K_Y)&=2D_k\cdot\Delta_k+\Delta_k\cdot K_Y \\
&=2\sqrt{\frac{2k}{A^2}}A\cdot\Delta_k+\Delta_k\cdot K_Y
\end{align*}
It follows that $\calg_k(Y,A)\leq\casy_{k+O(\sqrt{k})}(Y,A)$ and hence
$$\limsup_{k\to\infty}\frac{\calg_k(Y,A)^2}{k}\leq\lim_{k\to\infty}\frac{\casy_{k+O(\sqrt{k})}(Y,A)^2}{k}=\lim_{k\to\infty}\frac{\casy_k(Y,A)^2}{k}$$
where the equality holds by Prop.~\ref{prop:lim_asy}. For the converse inequality, which will establish that the limit of algebraic capacities exists, we consider an optimal nef $\Z$-divisor for $\calg_k(Y,A)$. This is a nef divisor with $D\cdot (D-K_Y)\geq 2k$ that hence features in the infimum for $\casy_k(Y,A)$ and so
$$\casy_k(Y,A)\leq\calg_k(Y,A)$$
which completes the proof.
\end{proof}

\begin{prop} \label{prop:recur} Suppose $(Y,A)$ is a smooth pseudo-polarised surface. Assume that $A$ is a $\Z$-divisor, and let $D$ be an optimal $\Z$-divisor for $\calg_k(Y,A)$ with $k$ sufficiently large. Then $D+A$ is optimal for $\calg_{k'}(Y,A)$ where $k'=k+\frac{1}{2}I(A)+\calg_k(Y,A)$.
\end{prop}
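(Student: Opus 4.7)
I plan to prove the equality $\calg_{k'}(Y,A) = \calg_k(Y,A) + A^2$ by matching upper and lower bounds; combined with $D+A$ being a candidate attaining exactly $\calg_k(Y,A) + A^2$, this yields the optimality of $D+A$.

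The upper bound is a direct expansion. The divisor $D+A$ is a nef $\Z$-divisor, and
\[
I(D+A) = I(D) + I(A) + 2D\cdot A \geq 2k + I(A) + 2\calg_k(Y,A) = 2k',
\]
using optimality of $D$. Hence $D+A$ features in the infimum defining $\calg_{k'}(Y,A)$, giving $\calg_{k'}(Y,A) \leq \calg_k(Y,A) + A^2$.

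For the lower bound, let $D'$ be any nef $\Z$-divisor with $I(D') \geq 2k'$; by the upper bound I may assume $D'\cdot A \leq \calg_k(Y,A) + A^2$. A parallel computation yields
\[
I(D'-A) = I(D') - 2D'\cdot A + 2A^2 - I(A) \geq 2k' - 2(\calg_k(Y,A)+A^2) + 2A^2 - I(A) = 2k.
\]
I would next show $D'-A$ is effective. By Serre duality $h^2(D'-A) = h^0(K_Y - D' + A)$; since
\[
(K_Y - D' + A)\cdot A = K_Y\cdot A + A^2 - D'\cdot A
\]
becomes negative for $k$ large — because $D'\cdot A \geq \calg_{k'}(Y,A) \sim \sqrt{2A^2 k'}$ by Thm.~\ref{thm:smooth_asy} while the rest is fixed — the divisor $K_Y - D' + A$ cannot be effective (it pairs negatively with the nef class $A$), so $h^2(D'-A) = 0$. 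Riemann--Roch then gives $h^0(D'-A) \geq \chi(D'-A) \geq \chi(\mO_Y) + k > 0$ for $k$ large.

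Given effectivity of $D'-A$ together with $I(D'-A) \geq 2k$, the inductive construction in the proof of Prop.~\ref{prop:ne_nef} supplies a preferable nef $\Z$-divisor $D''$, meaning $I(D'') \geq I(D'-A) \geq 2k$ and $(D'-A)-D''$ effective. Nefness of $A$ then gives $D''\cdot A \leq (D'-A)\cdot A$, so $D''$ is a candidate for $\calg_k(Y,A)$ and $\calg_k(Y,A) \leq D''\cdot A \leq D'\cdot A - A^2$ — precisely the desired bound. The delicate step is the effectivity of $D'-A$; it rests on the Kodaira-type vanishing $h^2(D'-A) = 0$ and hence on the asymptotic growth of $\calg_{k'}(Y,A)$, which is the structural input from Thm.~\ref{thm:smooth_asy} that underwrites the `$k$ sufficiently large' hypothesis.
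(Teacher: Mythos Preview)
Your proof is correct and follows essentially the same approach as the paper: both establish the upper bound by direct computation of $I(D+A)$, then handle the lower bound by showing that any competitor $D'$ yields an effective divisor $D'-A$ with $I(D'-A)\geq 2k$ (via the $h^2$-vanishing argument using $A\cdot(K_Y-D'+A)<0$ for large $k$) and invoking Prop.~\ref{prop:ne_nef} to pass to a preferable nef divisor. The only cosmetic differences are that the paper works in the Hodge-index basis and phrases the lower bound as a contradiction, whereas you compute intrinsically and argue by direct inequality.
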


\begin{proof} Write $D=aA+\sum b_ie_i$. As $D$ is optimal for $\calg_k(Y,A)$, we have $D\cdot A\leq D'\cdot A$ for all $\Z$-divisors $D'$ with $I(D')\geq 2k$. By the choice of $k'$, we have
$$\calg_{k'}(Y,A)\leq(D+A)\cdot A=\calg_k(Y,A)+A^2$$
Suppose $D'$ is a $\Z$-divisor with $I(D')\geq k'$ and $D'\cdot A<(D+A)\cdot A$. Write $D'=a'A+\sum b_i'e_i$ and consider
\begin{align*}
0&<(D+A)\cdot A-D'\cdot A \\
&=((a+1-a')A+\sum(b_i-b_i')e_i)\cdot A \\
&=(a+1-a')A^2
\end{align*}
and so $a'<a+1$. We claim that $D'-A$ is a strictly preferable divisor to $D$ and that, when $k$ is sufficiently large, $D'-A$ is effective. It follows then from Prop.~\ref{prop:ne_nef} that $\calg_k(Y,A)<D\cdot A$, which is a contradiction.

First, we note that the algebraic Weyl law for smooth surfaces gives that $\calg_k(Y,A)\to\infty$ as $k\to\infty$ and so, working in the given coordinates, we can choose $D'\cdot A$ sufficiently large. Observe then that $h^0(K_Y-D'+A)=0$ since
$$A\cdot(K_Y-D'+A)=A\cdot(K_Y+A)-D'\cdot A$$
is negative for large $k$ and so $K_Y-D'+A$ cannot be effective as $A$ is nef. Hence $h^2(D'-A)=0$ and we will subsequently show $I(D'-A)\geq I(D)\geq 2k$ giving, again for sufficiently large $k$, that we must have $h^0(D'-A)>0$.

We complete the proof by verifying that $D'-A$ would be a preferable candidate to $D$ for $\calg_k(Y,A)$ if such $D'$ existed. It is clear that $(D'-A)\cdot A<D\cdot A$ from the assumption $D'\cdot A<(D+A)\cdot A$. To compare indices, we compute
\begin{align*}
I(D'-A)&=(a'-1)(a'+c-1)A^2-\sum r_ib_i'(b_i'+d_i) \\
&=a'(a'+c)A^2-\sum r_ib_i'(b_i'+d_i)-(2a'+c-1)A^2 \\
&\geq 2k'-(2a'-2)A^2-(c+1)A^2 \\
&=2k'-2(D'-A)\cdot A-I(A) \\
&>2k'-2D\cdot A-I(A) \\
&=2k
\end{align*}
by the definition of $k'$.
\end{proof}

We say that $x\in\R$ is \emph{attained} by $(Y,A)$ if there exists some $k$ such that $\calg_k(Y,A)=x$. It follows from Prop.~\ref{prop:recur} there exists a set $S(Y,A)\subseteq[0,A^2)\cap\Z$ such that, for all sufficiently large $x\in\Z_{\geq0}$, $x$ is attained by $(Y,A)$ if and only if $x\equiv s\on{mod}{A^2}$ for some $s\in S(Y,A)$. We call $S(Y,A)$ the set of \emph{attained residues} of $(Y,A)$.

\begin{cor} \label{cor:fin_list} Suppose $(Y,A)$ is a pseudo-polarised smooth surface with $A$ a $\Z$-divisor. There exist $k_0,J\in\Z_{\geq0}$ such that
$$\{\calg_k(Y,A):k\geq k_0\}=\{s+jA^2:s\in S,j\geq J\}$$
\end{cor}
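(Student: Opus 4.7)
The plan rests on the residue-stabilisation for attained values already flagged in the paragraph preceding the statement, a consequence of Prop.~\ref{prop:recur}: there exist $S = S(Y,A) \subseteq [0,A^2)\cap\Z$ and $x_0 \in \Z_{\geq 0}$ so that for every integer $x \geq x_0$, $x$ is attained by $(Y,A)$ if and only if $x \bmod A^2 \in S$. I would briefly justify this setup before exploiting it: starting from any $k$ sufficiently large, Prop.~\ref{prop:recur} promotes an optimiser $D$ of $\calg_k(Y,A)$ to an optimiser $D+A$ of $\calg_{k'}(Y,A) = \calg_k(Y,A) + A^2$, so once a given residue modulo $A^2$ is realised at a sufficiently high level, it is realised at every higher level. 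Taking $S$ to be those residues hit at arbitrarily high level and $x_0$ the threshold past which no other residue appears gives the description above.

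With $S$ and $x_0$ in hand, fix any integer $J \geq x_0/A^2$ and set $T := \{s + jA^2 : s \in S,\ j \geq J\}$. Every $t \in T$ has $t \geq x_0$ and $t \bmod A^2 \in S$, so $t$ is attained; conversely every attained value $\geq JA^2$ is $\geq x_0$ and has residue in $S$, hence lies in $T$. Since $A$ and the optimisers produced by Lem.~\ref{lem:alg_attain} are $\Z$-divisors, $\calg_k(Y,A)$ is a non-decreasing $\Z_{\geq 0}$-valued function of $k$, and by Thm.~\ref{thm:smooth_asy} we have $\calg_k(Y,A) \to \infty$. I would therefore let $k_0$ be the least index with $\calg_{k_0}(Y,A) \geq JA^2$.

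Two quick inclusions finish the argument. For $k \geq k_0$, the value $\calg_k(Y,A)$ is attained and at least $JA^2$, so it lies in $T$. For $t \in T$, minimality of $k_0$ gives $\calg_{k_0-1}(Y,A) < JA^2 \leq t$, and since $t$ is attained, monotonicity forces any $k$ with $\calg_k(Y,A) = t$ to satisfy $k \geq k_0$, producing the required index. The only potentially delicate step is the residue-stabilisation itself, but this is essentially combinatorial once Prop.~\ref{prop:recur} is known; everything past that is bookkeeping on a non-decreasing integer sequence tending to infinity, so I expect no further obstacle.
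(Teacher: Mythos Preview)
Your argument is correct and follows essentially the same route as the paper: both proofs invoke the residue stabilisation derived from Prop.~\ref{prop:recur}, then use that $\calg_k(Y,A)$ is a non-decreasing integer sequence tending to infinity to match up $k_0$ and $J$. The only cosmetic difference is that the paper selects $k_0$ first and then a suitable $J$, whereas you fix $J$ from the threshold $x_0$ and then take $k_0$ minimal with $\calg_{k_0}(Y,A)\geq JA^2$; your version is the more explicit of the two.
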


\begin{proof}
It suffices to choose $k_0$ large enough such that the recursion from Prop.~\ref{prop:recur} holds and such that $k\geq k_0$ implies $\calg_k(Y,A)\equiv s\on{mod}{A^2}$ for some $s\in S(Y,A)$, and to then choose a suitable $J$ using that $\calg_k(Y,A)$ is an increasing sequence.
\end{proof}

\begin{cor} \label{cor:bd_gap} Suppose $(Y,A)$ is a pseudo-polarised smooth surface with $A$ a $\Z$-divisor. Then
$$\limsup_{k\to\infty}\{\calg_{k+1}(Y,A)-\calg_k(Y,A)\}<\infty$$
\end{cor}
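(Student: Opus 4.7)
The plan is to obtain this bound as an immediate bookkeeping consequence of Cor.~\ref{cor:fin_list}, where the substantive recursive structure has already been established by Prop.~\ref{prop:recur}. The key observation is that the set of attained values described in Cor.~\ref{cor:fin_list} is a finite union of arithmetic progressions with common difference $A^2$, so that the gaps between its consecutive elements are uniformly bounded above by $A^2$.

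First I would note that the sequence $\calg_k(Y,A)$ is nondecreasing in $k$: this is transparent from Def.~\ref{def:alg_cap}, since the defining infimum ranges over a smaller set of divisors as $k$ increases. Hence each difference $\calg_{k+1}(Y,A)-\calg_k(Y,A)$ is nonnegative and equals the gap between two (possibly equal) consecutive elements of the set of attained values $\{\calg_k(Y,A):k\geq k_0\}$.

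Next, I would apply Cor.~\ref{cor:fin_list} to write, for all $k\geq k_0$,
$$\{\calg_k(Y,A):k\geq k_0\}=\{s+jA^2:s\in S(Y,A),\; j\geq J\}$$
for a finite set $S(Y,A)\subseteq[0,A^2)\cap\Z$. Ordering $S(Y,A)=\{s_1<\dots<s_m\}$, the elements of this set sorted in increasing order have consecutive differences equal to one of $s_{i+1}-s_i$ for $1\leq i\leq m-1$ or the wrap-around $s_1+A^2-s_m$, each of which is at most $A^2$. Combined with monotonicity, this yields $\calg_{k+1}(Y,A)-\calg_k(Y,A)\leq A^2$ for all $k\geq k_0$, and hence $\limsup_{k\to\infty}\{\calg_{k+1}(Y,A)-\calg_k(Y,A)\}\leq A^2<\infty$.

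There is no real obstacle: once Cor.~\ref{cor:fin_list} is in hand, the statement is a direct consequence of the arithmetic-progression structure of the range together with the manifest monotonicity of $\calg_k(Y,A)$ in $k$. The content of the corollary is therefore not in the proof itself but in the recursion of Prop.~\ref{prop:recur} that underlies Cor.~\ref{cor:fin_list}.
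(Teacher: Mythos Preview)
Your proposal is correct and is exactly the intended argument: the paper states this corollary without proof immediately after Cor.~\ref{cor:fin_list}, from which it follows by precisely the arithmetic-progression bookkeeping you spell out. Your explicit bound $\calg_{k+1}(Y,A)-\calg_k(Y,A)\leq A^2$ for $k\geq k_0$ is a nice sharpening of the bare finiteness claim.
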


\begin{definition} \label{def:tc} Define the \emph{gap} of a weakly polarised surface $(Y,A)$ to be
$$\on{gap}(Y,A):=\limsup_{k\to\infty}\{\calg_{k+1}(Y,A)-\calg_k(Y,A)\}$$
We say that $(Y,A)$ is \emph{tightly-constrained} if $\on{gap}(Y,A)=1$.
\end{definition}

Note that $\on{gap}(Y,qA)=q\on{gap}(Y,A)$ for $q\in\R$ and so it follows that the gap is finite whenever $Y$ is smooth and $A$ is a real multiple of a $\Z$-divisor -- in particular, when $A$ is a $\Q$-divisor -- by Cor.~\ref{cor:bd_gap}. Note that, if $A$ is a $\Z$-divisor, $(Y,A)$ is tightly constrained if and only if there exists $r_0$ such that for all $r\in\Z_{\geq r_0}$ there exists $k$ with $\calg_k(Y,A)=r$. We call such $r_0$ a \emph{lower bound} for $(Y,A)$. Equivalently, $(Y,A)$ is tightly constrained if and only if $S(Y,A)=[0,A^2)\cap\Z$. In the smooth case this agrees with previous definitions of tightly constrained \cite[Def.~5.3]{bwo} and \cite[Def.~5.4.6]{bwt}. Lastly, observe that if we order the elements $s_1<s_2<\dots<s_m$ of $S(Y,A)$ then
$$\on{gap}(Y,A)=\on{max}\{s_\ell-s_{\ell-1}:\ell=2,\dots,m\}$$

\subsection{Sub-leading asymptotics for algebraic capacities}

Having the algebraic Weyl law for smooth surfaces suggests we make the following definition.

\begin{definition} We define the $k$th \emph{algebraic error term} of a weakly polarised $\Q$-factorial surface $(Y,A)$ to be
$$\ealg_k(Y,A):=\calg_k(Y,A)-\sqrt{2A^2k}$$
\end{definition}

When $(Y,A)$ is a pseudo-polarised smooth surface we see that $\ealg_k(Y,A)$ is $o(\sqrt{k})$. We will show much more.

\begin{thm} \label{thm:lims_z} Suppose $(Y,A)$ is a pseudo-polarised smooth surface with $A$ a $\Z$-divisor. Then
$$\limsup_{k\to\infty}\ealg_k(Y,A)=\on{gap}(Y,A)+\frac{1}{2}K_Y\cdot A\text{ and }\liminf_{k\to\infty}\ealg_k(Y,A)=\frac{1}{2}K_Y\cdot A$$
\end{thm}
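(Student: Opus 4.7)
The strategy is to derive sharp asymptotics for the first and last indices of each plateau of $\calg_k(Y,A)$ using the recursion in Prop~\ref{prop:recur}. Writing $\ell_v := \max\{k : \calg_k(Y,A) = v\}$ for each attained value $v$, the crux is to establish
$$\ell_{v+A^2} = \ell_v + \tfrac{1}{2}I(A) + v$$
for $v$ attained and sufficiently large. The ``$\leq$'' direction is direct: any $D$ optimising $\calg_{\ell_v}$ has $I(D)/2 = \ell_v$ and $D\cdot A = v$ (otherwise $D$ would witness $\calg_{\ell_v + 1} \leq v$, contradicting the definition of $\ell_v$), so Prop~\ref{prop:recur} gives $\calg_{\ell_v + I(A)/2 + v} = v + A^2$, forcing $\ell_{v+A^2} \geq \ell_v + I(A)/2 + v$. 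For ``$\geq$'', suppose $\ell_{v+A^2} > \ell_v + I(A)/2 + v$, let $D''$ be optimal at $\ell_{v+A^2}$, and set $D' := D'' - A$, so that $D'\cdot A = v$ and $I(D')/2 = \ell_{v+A^2} - v - I(A)/2 > \ell_v$. The same $h^2$-vanishing and Riemann-Roch arguments as in the proof of Prop~\ref{prop:recur} show $D'$ is effective for $v$ large, and Prop~\ref{prop:ne_nef} then produces a nef divisor $\wt{D}$ preferable to $D'$, with $\wt{D}\cdot A \leq v$ and $I(\wt{D})/2 \geq I(D')/2 > \ell_v$, hence $\calg_{I(\wt{D})/2}(Y,A) \leq v$; but $\calg_k > v$ for all $k > \ell_v$, a contradiction.

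Telescoping the recursion along each attained residue $s \in S(Y,A)$ gives $\ell_{s+jA^2} = \tfrac{A^2}{2}j^2 + j(s - \tfrac{1}{2}K_Y\cdot A) + C_s$ for $j$ large and some constant $C_s$, which rearranges to $\ell_v = \tfrac{v^2}{2A^2} - \tfrac{K_Y\cdot A}{2A^2}\,v + O(1)$. A routine $\sqrt{1+\eps}$ expansion then yields
$$\ealg_{\ell_v} = v - \sqrt{2A^2\ell_v} = \tfrac{1}{2}K_Y\cdot A + O(1/v)$$
as $v \to \infty$ through attained values. For the first-plateau index $f_v := \min\{k : \calg_k = v\}$, observe that $f_v = \ell_{v-g} + 1$ where $g = v - v_{\text{prev}}$ is the gap to the previous attained value; applying the formula for $\ell_{v-g}$ then produces $\ealg_{f_v} = g + \tfrac{1}{2}K_Y\cdot A + O(1/v)$.

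On each plateau $[f_v, \ell_v]$, since $\calg_k$ is constant and $\sqrt{2A^2 k}$ is strictly increasing, $\ealg_k$ is strictly decreasing, attaining its maximum at $k = f_v$ and minimum at $k = \ell_v$. Taking limits over attained $v$ therefore yields $\liminf_{k\to\infty}\ealg_k = \tfrac{1}{2}K_Y\cdot A$ and $\limsup_{k\to\infty}\ealg_k = \tfrac{1}{2}K_Y\cdot A + \on{gap}(Y,A)$, using that the gap pattern $v \mapsto g$ is periodic of period $A^2$ in $v$ so the maximal gap $\on{gap}(Y,A)$ is realised infinitely often as $v \to \infty$. The main obstacle is the reverse inequality in the $\ell_v$ recursion, where one must carefully combine the effectivity argument underlying Prop~\ref{prop:recur} with Prop~\ref{prop:ne_nef} to turn $D'' - A$ into a usable nef witness without losing control of its intersection with $A$.
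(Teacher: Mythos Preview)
Your proposal is correct and follows essentially the same approach as the paper. Your $\ell_v$ is the paper's $\on{cap}_{(Y,A)}(v)-1$, your recursion $\ell_{v+A^2}=\ell_v+\tfrac{1}{2}I(A)+v$ is the content of Cor.~\ref{cor:cap_recur}, your reverse-inequality argument via $D''-A$, effectivity, and Prop.~\ref{prop:ne_nef} is exactly the proof of Lem.~\ref{lem:cap_recur}, and your plateau analysis using $f_v=\ell_{v-g}+1$ mirrors the paper's computation with $\on{cap}_{(Y,A)}(x_i-1)$ and the residues $s_\ell-s_{\ell-1}$.
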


We outline some consequences before proving Thm.~\ref{thm:lims_z}.

\begin{definition} For any weakly polarised $\Q$-factorial surface $(Y,A)$ define the \emph{algebraic Ruelle invariant} to be
$$\ralg(Y,A):=-K_Y\cdot A-\on{gap}(Y,A)$$
\end{definition}

By `$A$ is a real multiple of a $\Z$-divisor', we mean that there exists $q\in\R_{>0}$ and $A_0\in\on{WDiv}(Y)_\Z$ such that $A=qA_0$. The complementary case is when an $\R$-divisor $A$ is \emph{irrational}: namely, $A$ is irrational if $qA$ is not a $\Z$-divisor for all $q\in\R\setminus\{0\}$.

\begin{cor} \label{cor:lim_q} When $(Y,A)$ is a pseudo-polarised smooth surface with $A$ a real multiple of a $\Z$-divisor, the midpoint of
$$\limsup_{k\to\infty}\ealg_k(Y,A)\text{ and }\liminf_{k\to\infty}\ealg_k(Y,A)$$
is given by $-\frac{1}{2}\ralg(Y,A)$.
\end{cor}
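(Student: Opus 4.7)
The plan is to reduce directly to the $\Z$-divisor case established in Thm.~\ref{thm:lims_z} using the Conformality property from Prop.~\ref{prop:cap_prop}. Write $A = qA_0$ with $q \in \R_{>0}$ and $A_0$ a big and nef $\Z$-divisor on $Y$. Conformality gives $\calg_k(Y,A) = q\,\calg_k(Y,A_0)$, and since $\sqrt{2A^2 k} = q\sqrt{2A_0^2 k}$, we obtain $\ealg_k(Y,A) = q\,\ealg_k(Y,A_0)$. Taking limsup and liminf (which commute with multiplication by $q > 0$) and applying Thm.~\ref{thm:lims_z} to $(Y,A_0)$ yields
\begin{align*}
\limsup_{k\to\infty}\ealg_k(Y,A) &= q\left(\on{gap}(Y,A_0) + \frac{1}{2}K_Y\cdot A_0\right), \\
\liminf_{k\to\infty}\ealg_k(Y,A) &= q \cdot \frac{1}{2}K_Y\cdot A_0.
\end{align*}

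The midpoint of these two values is
$$\frac{1}{2} q\,\on{gap}(Y,A_0) + q\cdot\frac{1}{2}K_Y\cdot A_0 \; = \; \frac{1}{2}\on{gap}(Y,A) + \frac{1}{2}K_Y\cdot A,$$
where we used linearity of the intersection pairing and the observation $\on{gap}(Y,qA_0) = q\on{gap}(Y,A_0)$ recorded after Def.~\ref{def:tc}. Comparing with $\ralg(Y,A) = -K_Y\cdot A - \on{gap}(Y,A)$ gives precisely $-\frac{1}{2}\ralg(Y,A)$, as claimed. There is no substantive obstacle here: the corollary is a purely arithmetic consequence of Thm.~\ref{thm:lims_z} combined with the linear rescaling of $\calg_k$, of the gap, and of the intersection form under positive scaling of the polarisation.
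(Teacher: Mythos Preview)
Your proof is correct and follows essentially the same route as the paper: reduce to the $\Z$-divisor case via the scaling behaviour of $\calg_k$, apply Thm.~\ref{thm:lims_z}, and use that both $\on{gap}(Y,\cdot)$ and the intersection pairing scale linearly in the polarisation. If anything, your formulation---writing $A=qA_0$ with $A_0$ a $\Z$-divisor---is slightly cleaner than the paper's ``let $q\in\Z_{\geq0}$ be such that $qA$ is a $\Z$-divisor'', which tacitly restricts to $\Q$-divisors.
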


\begin{proof} Let $q\in\Z_{\geq0}$ be such that $qA$ is a $\Z$-divisor. Then Thm.~\ref{thm:lims_z} gives that the midpoint of
$$\limsup_{k\to\infty}\ealg_k(Y,qA)\text{ and }\liminf_{k\to\infty}\ealg_k(Y,qA)$$
is $-\frac{1}{2}\ralg(Y,qA)$. It is clear that both $-K_Y\cdot qA$ and $\on{gap}(Y,qA)$ scale linearly with $q$ and hence $\ralg(Y,qA)$ also scales linearly with $q$. Thus the midpoint of
$$\limsup_{k\to\infty}\ealg_k(Y,A)\text{ and }\liminf_{k\to\infty}\ealg_k(Y,A)$$
is $-\frac{1}{2q}\ralg(Y,qA)=-\frac{1}{2}\ralg(Y,A)$.
\end{proof}

In order to prove Thm.~\ref{thm:lims_z} we will need to study the cap function more deeply. Recall that for smooth surfaces
$$\on{cap}_{(Y,A)}(x)=1+\frac{1}{2}\sup_{D\in\Nefqc(Y)}\{I(D):D\cdot A\leq x\}$$
Notice that if $D$ is preferable to $D'$ in the sense of Def.~\ref{def:pref} then it also preferable as a candidate for the optimisation problem defining the cap function.

\begin{lemma} \label{lem:cap_recur} Suppose $(Y,A)$ is a pseudo-polarised smooth surface with $A$ a $\Z$-divisor. Suppose $x$ is attained by $(Y,A)$ and let $D_0$ be an optimiser for $\on{cap}_{(Y,A)}(x)$. Then, when $x$ is sufficiently large, $D+A$ is an optimiser for $\on{cap}(x+A^2)$.
\end{lemma}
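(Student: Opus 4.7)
The plan is to mirror the proof of Prop.~\ref{prop:recur} essentially verbatim, with the optimisation problem defining the cap function replacing that defining $\calg_k$. First, since $x$ is attained, I may take $D_0$ with $D_0 \cdot A = x$ exactly. Then $D_0 + A$ is nef (as $A$ is big and nef and $D_0$ is nef), satisfies $(D_0 + A) \cdot A = x + A^2$, and has
\[
I(D_0 + A) = I(D_0) + I(A) + 2\,D_0 \cdot A = I(D_0) + I(A) + 2x.
\]
This gives the easy direction: $D_0 + A$ is a valid candidate realising $\on{cap}_{(Y,A)}(x + A^2) \ge \tfrac{1}{2}I(D_0 + A) + 1$.

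For optimality I would argue by contradiction, supposing there exists a nef $\Z$-divisor $D'$ with $D' \cdot A \le x + A^2$ and $I(D') > I(D_0) + I(A) + 2x$. The identity $I(D' - A) = I(D') - 2\,D' \cdot A + 2A^2 - I(A)$ combined with these two bounds produces
\[
(D' - A) \cdot A \le x \qquad \text{and} \qquad I(D' - A) > I(D_0).
\]
If $D' - A$ could be replaced by a nef $\Z$-divisor preferable to it, that divisor would strictly beat $D_0$ as a candidate for $\on{cap}_{(Y,A)}(x)$, contradicting the optimality of $D_0$.

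The main obstacle, exactly as in Prop.~\ref{prop:recur}, is verifying that $D' - A$ is effective for large $x$ so that Lem.~\ref{lem:iso_pref} and the argument of Prop.~\ref{prop:ne_nef} can be applied via $\ip_Y^\infty$ to produce a preferable nef $\Z$-divisor. I would compute $h^2(D' - A) = h^0(K_Y - D' + A)$ by Serre duality and show it vanishes via $A \cdot (K_Y - D' + A) = A \cdot (K_Y + A) - D' \cdot A < 0$. The upper bound $D' \cdot A \le x + A^2$ alone does not suffice; a lower bound on $D' \cdot A$ must be extracted from the hypothesis $I(D') > I(D_0) + I(A) + 2x$. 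Since $D'$ is nef, the Hodge index inequality $(D' \cdot A)^2 \ge D'^2 \cdot A^2$ together with $D'^2 = I(D') + D' \cdot K_Y$ forces $D' \cdot A$ to grow at least like $\sqrt{x}$, which closes the vanishing. Combined with Riemann--Roch and $I(D' - A) > I(D_0) \ge 0$, this yields $h^0(D' - A) > 0$, and $\ip_Y^\infty(D' - A)$ then supplies the preferable nef $\Z$-divisor contradicting the optimality of $D_0$.
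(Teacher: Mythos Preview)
Your argument is correct and follows the paper's proof closely: assume a strictly better $D'$, pass to $D'-A$, verify $I(D'-A)>I(D_0)$ and $(D'-A)\cdot A\leq x$, show $D'-A$ is effective for large $x$, and then invoke Prop.~\ref{prop:ne_nef} to produce a preferable nef $\Z$-divisor contradicting optimality of $D_0$.

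The one place you diverge from the paper is in forcing $D'\cdot A$ to be large (needed for $h^2(D'-A)=0$). The paper first appeals to Prop.~\ref{prop:recur} to see that $x+A^2$ is itself attained, and then uses the opening observation that on attained values the cap optimiser can be taken with $D'\cdot A$ equal to the bound; this gives $D'\cdot A=x+A^2$ exactly, from which the vanishing is immediate. You instead extract a lower bound on $D'\cdot A$ directly from $I(D')>2x+I(A)$ via Hodge index. This is a legitimate alternative and avoids citing Prop.~\ref{prop:recur}, but as written it needs one more line: from $(D'\cdot A)^2\geq D'^2\cdot A^2$ and $D'^2=I(D')+D'\cdot K_Y$ you cannot conclude $D'^2\to\infty$ without controlling $D'\cdot K_Y$. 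Decomposing $K_Y$ orthogonally against $A$ and using Cauchy--Schwarz on the negative-definite part gives $|D'\cdot K_Y|\leq C\cdot(D'\cdot A)$ for a constant $C$ depending only on $(Y,A)$; combining this with Hodge index then yields $\tfrac{(D'\cdot A)^2}{A^2}+C\,(D'\cdot A)\geq I(D')>2x+I(A)$, which indeed forces $D'\cdot A\gtrsim\sqrt{x}$. Alternatively, the compactness of $\{D\in\on{Nef}(Y):D\cdot A\leq M\}$ from Kleiman's criterion gives the needed lower bound without any computation.
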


\begin{proof} Notice first that if $x$ is attained, say $\calg_k(Y,A)=x$, then
$$\on{cap}_{(Y,A)}(x)=1+\frac{1}{2}\sup_{D\in\on{nef}(Y)_\Q}\{I(D):D\cdot A=x\}$$
Suppose $D_0+A$ is not optimal. Then there is some $D'$ such that $I(D')>I(D_0+A)$ and $D'\leq x+A^2$. Observe that from Prop.~\ref{prop:recur} $x+A^2$ is attained by $(Y,A)$ for sufficiently large $x$, and so we may assume that $D'\cdot A=x+A^2$. Consider the divisor $D'-A$. By a similar argument as was used in the proof of Prop.~\ref{prop:recur} we can show that $D'-A$ is effective when $x$ is large enough. We now compute
\begin{align*}
I(D'-A)&=I(D')-2D'\cdot A+A^2+A\cdot K_Y \\
&>I(D_0+A)-2D'\cdot A+A^2+A\cdot K_Y \\
&=I(D_0)+2D_0\cdot A-2D'\cdot A+2A^2 \\
&=I(D_0)+2x-2(x+A^2)+2A^2 \\
&=I(D_0)
\end{align*}
Since $(D'-A)\cdot A=x$, and replacing the effective divisor $D'-A$ with a preferable nef divisor using Lem.~\ref{lem:iso_pref} if necessary, we see that $\on{cap}_{(Y,A)}(x)>I(D_0)$, which is a contradiction as we assumed that $D_0$ was optimal.
\end{proof}

\begin{cor} \label{cor:cap_recur} With assumptions as in Lem.~\ref{lem:cap_recur}, we have
$$\on{cap}_{(Y,A)}(x+A^2)=\on{cap}_{(Y,A)}(x)+x+\frac{1}{2}I(A)$$
whenever $x$ is sufficiently large and is attained by $(Y,A)$.
\end{cor}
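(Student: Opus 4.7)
The plan is to invoke Lemma~\ref{lem:cap_recur} to identify an optimiser for $\on{cap}_{(Y,A)}(x+A^2)$, and then reduce the corollary to a direct expansion of the index. Since $x$ is attained by $(Y,A)$, the observation at the start of the proof of Lemma~\ref{lem:cap_recur} lets us pick a nef $\Z$-divisor $D_0$ optimising $\on{cap}_{(Y,A)}(x)$ with $D_0\cdot A = x$, so that
$$\on{cap}_{(Y,A)}(x) = 1 + \frac{1}{2}I(D_0).$$
For $x$ sufficiently large, Lemma~\ref{lem:cap_recur} asserts that $D_0+A$ is optimal for $\on{cap}_{(Y,A)}(x+A^2)$, and since $(D_0+A)\cdot A = x+A^2$, we correspondingly have
$$\on{cap}_{(Y,A)}(x+A^2) = 1 + \frac{1}{2}I(D_0+A).$$

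It then remains only to expand the index using bilinearity of the intersection pairing:
\begin{align*}
I(D_0+A) &= (D_0+A)\cdot(D_0+A - K_Y) \\
&= D_0\cdot(D_0 - K_Y) + 2\,D_0\cdot A + A\cdot(A - K_Y) \\
&= I(D_0) + 2x + I(A).
\end{align*}
Subtracting the two displayed equalities, halving the index difference, yields
$$\on{cap}_{(Y,A)}(x+A^2) - \on{cap}_{(Y,A)}(x) = x + \frac{1}{2}I(A),$$
which is the desired identity.

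There is no real obstacle in this proof beyond what has already been overcome in Lemma~\ref{lem:cap_recur}: the substantive content — that translation by $A$ preserves optimality for the cap function on attained values, provided $x$ is large — has been handled there, and the corollary is then just the bookkeeping of $I(\cdot)$ on either side via the binomial expansion of the intersection product. The only care needed is to work with an optimiser achieving $D_0\cdot A = x$ on the nose (which is possible precisely because $x$ is attained), so that the shifted divisor $D_0+A$ lands exactly at area $x+A^2$ rather than somewhere strictly below.
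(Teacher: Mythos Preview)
Your proof is correct and is precisely the immediate deduction the paper intends: the corollary is stated without proof because Lemma~\ref{lem:cap_recur} hands you the optimiser $D_0+A$ for $\on{cap}_{(Y,A)}(x+A^2)$, and the identity then drops out of the bilinear expansion $I(D_0+A)=I(D_0)+2D_0\cdot A+I(A)=I(D_0)+2x+I(A)$, exactly as you wrote.
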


\begin{proof}[Proof of Thm.~\ref{thm:lims_z}] Let $(Y,A)$ be a pseudo-polarised smooth surface with $A$ a $\Z$-divisor. By Cor.~\ref{cor:fin_list} there exist $k_0,J\in\Z_{\geq0}$ such that
$$\{\calg_k(Y,A):k\geq k_0\}=\{s+jA^2:j\in\Z_{\geq J},s\in S(Y,A)\}$$
Let $(x_i)$ be the sequence of distinct values of $\calg_k(Y,A)$ as $k\in\Z_{\geq0}$. Note that
$$\liminf_{k\to\infty}\ealg_k(Y,A)=\liminf_{i\to\infty}\{x_i-\sqrt{2A^2(\on{cap}_{(Y,A)}(x_i)-1)}\}$$
since the algebraic error term featuring $\calg_k(Y,A)=x_i$ is minimised when $k$ is as large as possible and this value of $k$ is exactly $\on{cap}_{(Y,A)}(x_i)-1$. Consider the subsequence of $(x_i)$ given by $y^s_j=s+jA^2$ where the index $j\geq J$. We see from Cor.~\ref{cor:cap_recur} that
$$\on{cap}_{(Y,A)}(y^s_{j+1})=\on{cap}_{(Y,A)}(y^s_j)+(s+jA^2)+\frac{1}{2}I(A)$$
Solving this difference equation in $j$ yields
$$\on{cap}_{(Y,A)}(y^s_j)=\frac{A^2}{2}j^2+(s-\frac{1}{2}K_Y\cdot A)j+\gamma_s$$
for some constant $\gamma_s\in\R$. This allows us to compute the limit
$$\lim_{j\to\infty}\{y^s_j-\sqrt{2A^2(\on{cap}_{(Y,A)}(y^s_j)-1)}\}=\lim_{j\to\infty}\{s+jA^2-\sqrt{(A^2)^2j^2+A^2(2s-K_Y\cdot A)j+\gamma_s''}\}$$
for some new constant $\gamma''_s$. Implicitising $j$ by solving $2A^2\on{cap}_{(Y,A)}(y^s_j)=k$ we can express this limit as
\begin{align*}
\lim_{k\to\infty}\{s+\frac{(K_Y\cdot A-2s)A^2}{2A^2}+\frac{\sqrt{(A^2)^2(K_Y\cdot A-2s)^2+4(A^2)^2(k-\gamma_s'')}}{2A^2}-\sqrt{k-1}\}=\frac{1}{2}K_Y\cdot A
\end{align*}
The sequence $(x_i)$ is hence a union of subsequences limiting to $\frac{1}{2}K_Y\cdot A$ and so
$$\liminf_{k\to\infty}\ealg_k(Y,A)=\lim_{i\to\infty}\{x_i-\sqrt{2A^2(\on{cap}_{(Y,A)}(x_i)-1)}\}=\frac{1}{2}K_Y\cdot A$$
We will show
$$\limsup_{k\to\infty}\ealg_k(Y,A)-\liminf_{k\to\infty}\ealg_k(Y,A)=\on{gap}(Y,A)$$
which gives the theorem. List the elements of $S(Y,A)$ in increasing order: $s_1<s_2<\dots<s_m$. We have
$$\limsup_{k\to\infty}\ealg_k(Y,A)=\limsup_{k\to\infty}\{x_i-\sqrt{2A^2\on{cap}_{(Y,A)}(x_i-1)}\}$$
since $\on{cap}_{(Y,A)}(x_i-1)$ is the smallest $k$ with $\calg_k(Y,A)=x_i$. We again consider the subsequences $(y^s_j)$. For each of these subsequences we can compute
\begin{align*}
\lim_{j\to\infty}\{y^{s_\ell}_j-&\,\sqrt{2A^2\on{cap}_{(Y,A)}(y^{s_\ell}_j-1)}\} \\
&=\lim_{j\to\infty}\{s_\ell+jA^2-\sqrt{2A^2\on{cap}_{(Y,A)}(s_\ell+jA^2-1)}\} \\
&=\lim_{j\to\infty}\{s_\ell+jA^2-\sqrt{2A^2\on{cap}_{(Y,A)}(s_{\ell-1}+jA^2)}\} \\
&=(s_\ell-s_{\ell-1})+\lim_{j\to\infty}\{s_{\ell-1}+jA^2-\sqrt{2A^2\on{cap}_{(Y,A)}(s_{\ell-1}+jA^2)}\} \\
&=(s_\ell-s_{\ell-1})+\lim_{j\to\infty}\{s_{\ell-1}+jA^2-\sqrt{2A^2(\on{cap}_{(Y,A)}(s_{\ell-1}+jA^2)-1)}\} \\
&=(s_\ell-s_{\ell-1})+\liminf_{k\to\infty}\ealg_k(Y,A)
\end{align*}
Again, since the $(y^{s_\ell}_j)$ cover the whole sequence $(x_i)$, we see that
\begin{align*}
\limsup_{k\to\infty}\ealg_k(Y,A)&=\underset{2\leq\ell\leq m}{\on{max}}(s_\ell-s_{\ell-1})+\liminf_{k\to\infty}\ealg_k(Y,A) \\
&=\on{gap}(Y,A)+\liminf_{k\to\infty}\ealg_k(Y,A)
\end{align*}
as required.
\end{proof}

We deduce the following result, which will also be of use in other contexts.

\begin{lemma} \label{lem:fin_div} Suppose $(Y,A)$ is a pseudo-polarised smooth surface with $A$ a $\Z$-divisor. There exists a finite list of nef $\Z$-divisors $D_1,\dots,D_n$ on $Y$ such that, for all sufficiently large $k$, optimisers for $\calg_k(Y,A)$ can be chosen to take the form
$$D_i+jA$$
for some $i\in\{1,\dots,n\}$ and some $j\in\Z_{\geq0}$.
\end{lemma}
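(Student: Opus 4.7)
The plan is to combine Cor.~\ref{cor:fin_list} with the recursive structure of optimisers from Prop.~\ref{prop:recur} and Cor.~\ref{cor:cap_recur}, applied separately to each residue class in $S(Y,A)$.

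First, by Cor.~\ref{cor:fin_list} there exist $k_0, J \in \Z_{\geq 0}$ such that
$$\{\calg_k(Y,A): k\geq k_0\} = \{s + jA^2 : s\in S(Y,A),\, j\geq J\}.$$
Since $\calg_k(Y,A)$ is nondecreasing in $k$, the indices $k$ attaining any fixed value $x$ form an interval $[k^-_x, k^+_x]$ with $k^+_x = \on{cap}_{(Y,A)}(x) - 1$. I would first record the observation that any optimiser at the upper endpoint of such a range optimises every $k$ in the range: if $D$ achieves $D\cdot A = x$ and $I(D) \geq 2k^+_x$, it remains feasible for every smaller $k \in [k^-_x,k^+_x]$, so attains $\calg_k(Y,A)=x$ there too.

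For each residue $s \in S(Y,A)$ I would select an optimiser $D_s$ at the level $k^+_{s+JA^2}$, enlarging $k_0$ in advance so that every such level exceeds the thresholds required by Prop.~\ref{prop:recur} and Cor.~\ref{cor:cap_recur}. Applying Prop.~\ref{prop:recur} gives that $D_s + A$ optimises $\calg_{k'}(Y,A)$ for $k' = k^+_{s+JA^2} + \tfrac{1}{2}I(A) + (s+JA^2)$; by Cor.~\ref{cor:cap_recur} this $k'$ equals $\on{cap}_{(Y,A)}(s+(J+1)A^2) - 1 = k^+_{s+(J+1)A^2}$. Iterating, $D_s + jA$ is an optimiser at the upper endpoint of the range attaining value $s+(J+j)A^2$ for every $j \geq 0$, and by the observation above it is an optimiser for every $k$ in that range.

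Setting $\{D_1,\dots,D_n\} := \{D_s : s \in S(Y,A)\}$, which is finite since $S(Y,A) \subseteq [0, A^2) \cap \Z$, every sufficiently large $k$ lies in the range attaining a unique value $s + (J+j)A^2$, and $D_s + jA$ furnishes the required optimiser. The main obstacle is aligning the arithmetic recurrence of Prop.~\ref{prop:recur} with the step structure of $\calg_k(Y,A)$; without Cor.~\ref{cor:cap_recur} translating the increment $k \mapsto k'$ into the matching increment of cap values, iteration would only exhibit optimisers of the desired form at a sparse subsequence of indices rather than for every sufficiently large $k$.
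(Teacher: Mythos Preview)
Your proof is correct and follows essentially the same route as the paper: both arguments use Cor.~\ref{cor:fin_list} to partition the eventual values of $\calg_k(Y,A)$ into residue classes $s+jA^2$, pick one optimiser per class, and propagate by adding $A$, noting that an optimiser at the top index $k^+_x$ of a level serves for every $k$ attaining that level. The only cosmetic difference is that the paper invokes Lem.~\ref{lem:cap_recur} (the recursion for cap optimisers) directly, whereas you achieve the same step by combining Prop.~\ref{prop:recur} with the arithmetic identity of Cor.~\ref{cor:cap_recur} to match $k'$ with $k^+_{s+(J+1)A^2}$; since Cor.~\ref{cor:cap_recur} is itself a consequence of Lem.~\ref{lem:cap_recur}, the two arguments are interchangeable.
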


\begin{proof} Suppose $x$ is attained by $(Y,A)$. Recall that a divisor $D$ that is optimal for $\on{cap}_{(Y,A)}(x)$ is also optimal for $\calg_{k(x)}(Y,A)$ where $k(x)$ is the largest integer $k$ such that $\calg_k(Y,A)\leq x$ (or such that $\calg_k(Y,A)=x$). Note that such $D$ is then optimal for the optimisation problems for all $\calg_k(Y,A)$ such that $\calg_k(Y,A)=x$. Let $x_0\in\Z_{\geq0}$ be such that the recursion from Lem.~\ref{lem:cap_recur} holds for all $x\geq x_0$. Let $k_0\in\Z_{\geq0}$ such that $\calg_k(Y,A)\geq x_0$ for all $k\geq k_0$. Without loss of generality choose $k_0$ such that
$$\{\calg_k(Y,A):k\geq k_0\}=\{s+jA^2:s\in S(Y,A),j\geq J\}$$
for some $J\in\Z_{\geq0}$ as in Cor.~\ref{cor:fin_list}. Choose $k_i\geq k_0$ such that $\calg_{k_i}(Y,A)\equiv s_i\on{mod}{A^2}$ for each $s_i\in S(Y,A)$. Set $x_i=\calg_{k_i}(Y,A)$ and let $D_i$ be optimal for $\on{cap}_{(Y,A)}(x_i)$. Then we see from Lem.~\ref{lem:cap_recur} that $D_i+jA$ is optimal for $\on{cap}_{(Y,A)}(x_i+jA^2)$ and in particular that $D_i+jA$ is optimal for all the optimisation problems for $\calg_k(Y,A)$ such that $\calg_k(Y,A)=x_i+jA^2$. Since this covers all values of $\calg_k(Y,A)$ for $k$ large by Cor.~\ref{cor:fin_list} we see that optimisers for $\calg_k(Y,A)$ for all sufficiently large $k$ can be chosen to take the form $D_i+jA$ where $i$ is such that $\calg_k(Y,A)\equiv s_i\on{mod}{A^2}$.
\end{proof}

\subsection{Tightly-constrained polarised surfaces}

We will show the following theorem, which will help establish \cite[Conj.~5.7]{bwo} by characterising when $(Y,A)$ is tightly constrained.

\begin{thm} \label{thm:gap_inf} Let $(Y,A)$ be a pseudo-polarised smooth surface with $A$ a $\Z$-divisor. Then
$$\on{gap}(Y,A)=\inf_{D\in N^1(Y)_\Z}\{D\cdot A:D\cdot A>0\}$$
\end{thm}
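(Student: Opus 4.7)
Write $g$ for the infimum on the right-hand side. The first observation is that $g$ is the positive generator of the image of the group homomorphism $N^1(Y)_\Z\to\Z$, $D\mapsto D\cdot A$; this image is nontrivial since $A^2>0$, so $g$ is a positive integer and every intersection number $D\cdot A$ with $D\in N^1(Y)_\Z$ is a multiple of $g$. In particular $g\mid A^2$ and $g\mid\calg_k(Y,A)$ for every $k$.

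For the lower bound $\on{gap}(Y,A)\geq g$, I would observe that every positive difference $\calg_{k+1}(Y,A)-\calg_k(Y,A)$ is of the form $(D_{k+1}-D_k)\cdot A$ for optimising nef $\Z$-divisors, hence is a positive element of $g\Z$, and so is at least $g$. Since $\calg_k(Y,A)\to\infty$ by the algebraic Weyl law (Thm.~\ref{thm:smooth_asy}), infinitely many such differences are strictly positive, and the $\limsup$ is at least $g$.

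The main work is the matching upper bound $\on{gap}(Y,A)\leq g$. My plan is to fix a class $D^*\in N^1(Y)_\Z$ with $D^*\cdot A=g$ and, for each large $k$, to manufacture a nef $\Z$-divisor of index $\geq 2(k+1)$ whose area is at most $\calg_k(Y,A)+g$. Starting from an optimiser $D_k$, Lem.~\ref{lem:fin_div} lets me assume $D_k=D_i+j_kA$ with $i$ drawn from a fixed finite list of nef $\Z$-divisors and $j_k\to\infty$ as $k\to\infty$. The candidate is the class $E_k:=D_k+D^*=D_i+D^*+j_kA$. A direct computation gives
\[
I(E_k)=I(D_k)+2D_k\cdot D^*+I(D^*)\geq 2k+2j_kg+\text{bounded},
\]
so the index inequality $I(E_k)\geq 2(k+1)$ holds once $k$ is large; the area is exactly $\calg_k(Y,A)+g$ by construction.

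The delicate step, and the one I expect to be the main obstacle, is to replace the possibly non-nef class $E_k$ by a preferable nef $\Z$-divisor. For this I would first argue that $E_k$ admits an effective $\Z$-divisor representative for $k$ large: since $A$ is big, asymptotic Riemann--Roch gives that the Euler characteristic $\chi(E_k)=\chi(D_i+D^*+j_kA)$ grows like $\tfrac{1}{2}j_k^2A^2$, while $h^2(E_k)=h^0(K_Y-E_k)$ vanishes for large $k$ because $A\cdot(K_Y-E_k)\to-\infty$ and $A$ is nef. Hence $h^0(E_k)\to\infty$, so $E_k$ is effective for all sufficiently large $k$. Prop.~\ref{prop:ne_nef} then produces a nef $\Z$-divisor $D'$ preferable to $E_k$, and so $\calg_{k+1}(Y,A)\leq D'\cdot A\leq\calg_k(Y,A)+g$ for all sufficiently large $k$, which gives $\on{gap}(Y,A)\leq g$ and closes the argument.
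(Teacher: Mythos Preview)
Your proposal is correct and uses essentially the same ingredients as the paper's proof: the structure of optimisers from Lem.~\ref{lem:fin_div}, the idea of adding a class $D^*$ with $D^*\cdot A=g$ to an optimiser, the effectiveness argument via growth of $\chi$ and vanishing of $h^2$, and Prop.~\ref{prop:ne_nef} to pass to a preferable nef divisor.

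The only difference is organisational. The paper works on the dual side through the cap function: it first shows that any optimiser for $\on{cap}_{(Y,A)}(x)$ at an attained value $x$ must have $D\cdot A=x$ exactly, and then deduces $\on{cap}_{(Y,A)}(x+g)>\on{cap}_{(Y,A)}(x)$ for all large $x$, which is equivalent to $\on{gap}(Y,A)\leq g$. You instead bound $\calg_{k+1}(Y,A)-\calg_k(Y,A)\leq g$ directly. Your route is a little more economical for this theorem in isolation, since it bypasses the auxiliary tightness statement for cap optimisers; the paper's detour, however, establishes that tightness as a fact of independent use elsewhere in \S\ref{sec:asymptotics}.
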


Observe that since the map $N^1(Y)_\Z\to\Z,D\mapsto D\cdot A$ is a group homomorphism the quantity $\inf_{D\in N^1(Y)_\Z}\{D\cdot A:D\cdot A>0\}$ is the positive generator of the image of this homomorphism.

\begin{proof} Let $p=\inf_{D\in N^1(Y)_\Z}\{D\cdot A:D\cdot A>0\}$. It is clear that $\on{gap}(Y,A)\geq p$. For the converse inequality,
suppose that $D_*$ is a $\Z$-divisor witnessing the infimum; so $D_*\cdot A=p$. Consider the optimisation problem
$$\on{cap}_{(Y,A)}(x)=1+\frac{1}{2}\on{max}\{I(D):D\cdot A\leq x\}$$
We claim that for any $x\in\Z_{\geq0}$ such that there exists $k\in\Z_{\geq0}$ with $\calg_k(Y,A)=x$ we have that an optimiser $D$ for $\on{cap}_{(Y,A)}(x)$ must satisfy $D\cdot A=x$. Suppose not; let $D$ be an optimiser with $D\cdot A<x$. By definition $p$ divides $x-D\cdot A$. Let $x-D\cdot A=pm$ and consider the $\Z$-divisor $D'=D+mD_*$. We compute
$$D'\cdot A=x\text{ and }I(D')=I(D)+2mD\cdot D_*+I(mD_*)$$
Notice that a nef $\Z$-divisor that is optimal for $\on{cap}_{(Y,A)}(x)$ for some $x\in\Z_{\geq0}$ is also optimal for $\calg_{k(x)}(Y,A)$ where $k(x)$ is the largest index $k$ such that $\calg_k(Y,A)\leq x$. Lem.~\ref{lem:fin_div} implies that $D=D_i+jA$ where $D_i$ comes from a finite list of $\Z$-divisors that is independent of $x$. When $x$ is sufficiently large -- and hence $j$ is also large -- it is easy to see that $D'$ is effective. We compute
$$D\cdot D_*=D_i\cdot D_*+j$$
which is positive when $x$ is sufficiently large. Therefore,
$$D'\cdot A=x\text{ and }I(D')>I(D)$$
and so we see that $D$ is not optimal for $\on{cap}_{(Y,A)}(x)$. Hence, when $x\in\Z_{\geq0}$ is such that $\calg_k(Y,A)=x$ for some $k\in\Z_{\geq0}$, optimisers $D$ for $\on{cap}_{(Y,A)}(x)$ must have $D\cdot A=x$. For $x=\calg_k(Y,A)$, observe that there exists $k$ with $\calg_k(Y,A)=x+p$ when $x$ is sufficiently large. This is true since, if $D$ optimises $\calg_k(Y,A)$, then $(D+D_*)\cdot A=\calg_k(Y,A)+p$ and, $I(D+D_*)>2k$ from the previous index calculation when $x$ is sufficiently large. It follows that for all $x\in\Z_{\geq0}$
$$\on{cap}_{(Y,A)}(x+p)>\on{cap}_{(Y,A)}(x)$$
and so $\on{gap}(Y,A)\leq p$.
\end{proof}

\begin{cor} \label{cor:alg_tc} Let $(Y,A)$ be a pseudo-polarised smooth surface with $A$ a real multiple of a $\Z$-divisor. Then $(Y,A)$ is tightly constrained if and only if
$$\inf_{D\in N^1(Y)_\Z}\{D\cdot A:D\cdot A>0\}=1$$
\end{cor}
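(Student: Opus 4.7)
The plan is to deduce this corollary directly from Thm.~\ref{thm:gap_inf} by exploiting the fact that both sides of the claimed identity for $\on{gap}(Y,A)$ scale linearly under $A \mapsto qA$ for $q \in \R_{>0}$. Since the statement concerns $A$ a real (not necessarily rational) multiple of a $\Z$-divisor, Thm.~\ref{thm:gap_inf} does not apply directly, but after rescaling it will.

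First I would verify the two scaling facts. By conformality (Prop.~\ref{prop:cap_prop}), $\calg_k(Y,qA) = q\cdot\calg_k(Y,A)$ for every $q\in\R_{>0}$ and every $k$, hence straight from Def.~\ref{def:tc}
$$\on{gap}(Y,qA)=\limsup_{k\to\infty}\{\calg_{k+1}(Y,qA)-\calg_k(Y,qA)\}=q\cdot\on{gap}(Y,A).$$
On the arithmetic side, the map $N^1(Y)_\Z\to\R,\ D\mapsto D\cdot A$ is $\R$-linear in $A$, so
$$\inf_{D\in N^1(Y)_\Z}\{D\cdot qA:D\cdot qA>0\}=q\cdot\inf_{D\in N^1(Y)_\Z}\{D\cdot A:D\cdot A>0\}.$$

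Next I would write $A=qA_0$ with $q\in\R_{>0}$ and $A_0$ a big and nef $\Z$-divisor, apply Thm.~\ref{thm:gap_inf} to the pseudo-polarised smooth surface $(Y,A_0)$ to obtain $\on{gap}(Y,A_0)=\inf_{D\in N^1(Y)_\Z}\{D\cdot A_0:D\cdot A_0>0\}$, and then multiply both sides by $q$. Combining this with the two scaling identities above yields
$$\on{gap}(Y,A)=\inf_{D\in N^1(Y)_\Z}\{D\cdot A:D\cdot A>0\}$$
for every $A$ that is a real multiple of a $\Z$-divisor.

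The corollary is then immediate: by Def.~\ref{def:tc}, $(Y,A)$ is tightly-constrained precisely when $\on{gap}(Y,A)=1$, which by the displayed equality is precisely the condition $\inf_{D\in N^1(Y)_\Z}\{D\cdot A:D\cdot A>0\}=1$. There is essentially no obstacle here since Thm.~\ref{thm:gap_inf} does all the substantive work; the only thing to be mindful of is that when $q$ is irrational, both sides automatically fail to equal $1$ in a compatible way (the right-hand side takes the value $qn$ for the positive integer $n=\inf\{D\cdot A_0:D\cdot A_0>0\}$), so the equivalence is vacuously preserved in that case.
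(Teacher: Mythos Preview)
Your proof is correct and follows essentially the same approach as the paper: write $A=qA_0$ with $A_0$ a big and nef $\Z$-divisor, apply Thm.~\ref{thm:gap_inf} to $(Y,A_0)$, and then use that both $\on{gap}(Y,\cdot)$ and the infimum scale linearly in $q$ to transport the identity to $A$. Your version is slightly more explicit in citing conformality (Prop.~\ref{prop:cap_prop}) for the scaling of $\on{gap}$, and your closing remark about the irrational-$q$ case is a nice sanity check, but neither departs from the paper's argument.
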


\begin{proof} The result follows immediately from Thm.~\ref{thm:gap_inf} when $A$ is a $\Z$-divisor. Suppose $A=qA_0$ as above. As discussed previously, $\on{gap}(Y,A)=q\cdot\on{gap}(Y,A_0)$ and
$$\on{gap}(Y,A_0)=\inf_{D\in N^1(Y)_\Z}\{D\cdot A_0:D\cdot A>0\}$$
by Thm.~\ref{thm:gap_inf} since $A_0$ is a $\Z$-divisor. Notice that the infimum also scales linearly with $A$ and so
$$\on{gap}(Y,A)=q\cdot\on{gap}(Y,A_0)=q\cdot\inf_{D\in N^1(Y)_\Z}\{D\cdot A:D\cdot A_0>0\}=\inf_{D\in N^1(Y)_\Z}\{D\cdot A:D\cdot A>0\}$$
which gives the result.
\end{proof}

\begin{cor} Suppose that $(Y,A)$ is a pseudo-polarised smooth surface with $A$ a real multiple of a $\Z$-divisor. For sufficiently large $k$ we have $\calg_{k+1}(Y,A)-\calg_k(Y,A)\in\{0,\on{gap}(Y,A)\}$.
\end{cor}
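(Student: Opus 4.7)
The plan is to first reduce to the case where $A$ is a $\Z$-divisor via conformality (Prop.~\ref{prop:cap_prop}). If $A=qA_0$ with $A_0$ a $\Z$-divisor and $q\in\R_{>0}$, then $\calg_k(Y,A)=q\calg_k(Y,A_0)$ for every $k$, and from the discussion following Def.~\ref{def:tc}, $\on{gap}(Y,A)=q\on{gap}(Y,A_0)$. Hence $\calg_{k+1}(Y,A)-\calg_k(Y,A)=q(\calg_{k+1}(Y,A_0)-\calg_k(Y,A_0))$, so the dichotomy for $A_0$ rescales directly to give the dichotomy for $A$.

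Assume now that $A$ is a $\Z$-divisor and set $p:=\on{gap}(Y,A)$. By Thm.~\ref{thm:gap_inf}, $p$ is the positive generator of the image of the group homomorphism $N^1(Y)_\Z\to\Z$, $D\mapsto D\cdot A$. Consequently every value $\calg_k(Y,A)$ lies in $p\Z$, and $p$ divides $A^2$ since $A\in N^1(Y)_\Z$.

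The key step is to show that for sufficiently large attained values $x$, the value $x+p$ is also attained. Let $D_*$ be a $\Z$-divisor with $D_*\cdot A=p$, as furnished by Thm.~\ref{thm:gap_inf}. If $D$ is an optimiser for $\calg_k(Y,A)$ with $D\cdot A=x$, then $(D+D_*)\cdot A=x+p$, and the normal form $D=D_i+jA$ from Lem.~\ref{lem:fin_div} together with the index computation inside the proof of Thm.~\ref{thm:gap_inf} yields $I(D+D_*)>2k$ once $k$ (equivalently $x$) is large enough. Since $D+D_*$ is effective in that regime, Prop.~\ref{prop:ne_nef} produces a preferable nef $\Z$-divisor, exhibiting some index $k'$ with $\calg_{k'}(Y,A)\leq x+p$. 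Combined with $\calg_{k'}(Y,A)\in p\Z$ and $\calg_{k'}(Y,A)>x$, this forces $\calg_{k'}(Y,A)=x+p$.

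Feeding this step-up property into Cor.~\ref{cor:fin_list}, which describes $\{\calg_k(Y,A):k\geq k_0\}$ as a finite union of arithmetic progressions modulo $A^2$ indexed by $S(Y,A)\subseteq p\Z\cap[0,A^2)$, I would deduce that $\{\calg_k(Y,A):k\geq k_0\}=p\Z\cap[N,\infty)$ for some threshold $N\in p\Z_{\geq0}$; equivalently, $S(Y,A)$ fills out all of $p\Z\cap[0,A^2)$. Consecutive attained values therefore differ by exactly $p=\on{gap}(Y,A)$. Since $k\mapsto\calg_k(Y,A)$ is non-decreasing, each consecutive difference is either $0$, when the attained value is repeated for successive indices, or the jump $p$, yielding the dichotomy. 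The main technical hurdle is the step-up property: one must simultaneously verify the effectiveness of $D+D_*$ and the strict index inequality $I(D+D_*)>2k$ uniformly for large $k$, which rests on the normal form for optimisers provided by Lem.~\ref{lem:fin_div}.
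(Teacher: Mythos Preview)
Your proof is correct, but it takes a considerably longer route than the paper's own argument. You essentially reprove the step-up property from the proof of Thm.~\ref{thm:gap_inf} and then combine it with Cor.~\ref{cor:fin_list} to show the stronger fact that $S(Y,A)$ fills out all of $p\Z\cap[0,A^2)$, i.e.\ that every sufficiently large multiple of $p$ is attained. This does yield the dichotomy, but the machinery of Lem.~\ref{lem:fin_div}, effectiveness of $D+D_*$, and the structure of $S(Y,A)$ is all avoidable here.

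The paper's proof is a two-line observation once Thm.~\ref{thm:gap_inf} is in hand. After reducing to $A=A_0$ a $\Z$-divisor (exactly as you do), note that every $\calg_k(Y,A_0)$ lies in $p\Z$ since $p$ is the positive generator of the image of $D\mapsto D\cdot A_0$; hence every difference $\calg_{k+1}(Y,A_0)-\calg_k(Y,A_0)$ is a nonnegative multiple of $p$. On the other hand, $\on{gap}(Y,A_0)=p$ is by definition the $\limsup$ of these differences, so for all sufficiently large $k$ the difference is at most $p$. A nonnegative multiple of $p$ that is at most $p$ lies in $\{0,p\}$, and rescaling by $q$ finishes. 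Your approach buys the extra conclusion that $S(Y,A)=p\Z\cap[0,A^2)$, which is nice to know but not needed for the corollary.
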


\begin{proof} First write $A=qA_0$ for some nef $\Z$-divisor $A_0$ and let $p=\on{gap}(Y,A_0)$. Then, since $\calg_k(Y,A_0)\in\Z$, by definition of the gap there exists $k_0$ such that $k\geq k_0$ implies $0\leq\calg_{k+1}(Y,A)-\calg_k(Y,A)\leq p$. By the infimum expression of $\on{gap}(Y,A_0)$ in Thm.~\ref{thm:gap_inf} we must then have $\calg_{k+1}(Y,A)-\calg_k(Y,A)\in\{0,p\}$ for all $k\geq k_0$. The result for $A$ then follows by linearity in $q$.
\end{proof}

\subsection{Asymptotics for singular toric surfaces}

We consider pseudo-polarised singular surfaces. The continuity results of \S\ref{sec:degen} allow us to extend the algebraic Weyl law to singular toric surfaces.

\begin{prop} \label{prop:sing_asy} Suppose $(Y,A)$ is a pseudo-polarised toric surface. Then
$$\lim_{k\to\infty}\frac{\calg_k(Y,A)^2}{k}=2A^2$$
If $A$ is not irrational then
$$\limsup_{k\to\infty}\ealg_k(Y,A)=\on{gap}(Y,A)+\frac{1}{2}K_Y\cdot A\text{ and }\liminf_{k\to\infty}\ealg_k(Y,A)=\frac{1}{2}K_Y\cdot A$$
\end{prop}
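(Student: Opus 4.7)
The plan is to reduce to the smooth case by taking a toric resolution of singularities $\pi\colon\wt{Y}\to Y$ and transferring results via Prop.~\ref{prop:degen_toric}. Since $\wt{Y}$ is smooth toric and $A$ is big and nef on $Y$, the pullback $\pi^*A$ is big and nef on $\wt{Y}$, so $(\wt{Y},\pi^*A)$ is a pseudo-polarised smooth toric surface. Prop.~\ref{prop:degen_toric} yields $\calg_k(Y,A)=\calg_k(\wt{Y},\pi^*A)$ for every $k$, and the projection formula gives $(\pi^*A)^2=A^2$, so the two sequences of error terms $\ealg_k(Y,A)$ and $\ealg_k(\wt{Y},\pi^*A)$ coincide, and likewise $\on{gap}(Y,A)=\on{gap}(\wt{Y},\pi^*A)$.

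For the algebraic Weyl law, I would apply Thm.~\ref{thm:smooth_asy} to $(\wt{Y},\pi^*A)$ directly, obtaining
$$\lim_{k\to\infty}\frac{\calg_k(Y,A)^2}{k}=\lim_{k\to\infty}\frac{\calg_k(\wt{Y},\pi^*A)^2}{k}=2(\pi^*A)^2=2A^2.$$

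For the sub-leading asymptotics under the hypothesis that $A$ is not irrational, I would write $A=qA_0$ with $q\in\R_{>0}$ and $A_0\in\on{WDiv}(Y)_\Z$. The subtlety is that $\pi^*A_0$ need only be $\Q$-Cartier on $\wt{Y}$, so Thm.~\ref{thm:lims_z} does not apply to it directly. The remedy is to pick a positive integer $n$ with $n\pi^*A_0$ a $\Z$-divisor on the smooth surface $\wt{Y}$, apply Thm.~\ref{thm:lims_z} to the pair $(\wt{Y},n\pi^*A_0)$, and then remove the factor of $n$ via the conformality property from Prop.~\ref{prop:cap_prop}, which scales both $\ealg_k$ and $\on{gap}$ linearly. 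A second conformal rescaling by $q$ passes to $(\wt{Y},\pi^*A)$. To rewrite the formulas in terms of data on $Y$, I would use the relation $K_{\wt{Y}}=\pi^*K_Y+\sum a_jE_j$ on the resolution together with the projection formula $E_j\cdot\pi^*A=A\cdot\pi_*E_j=0$ for each exceptional $E_j$ to conclude $K_{\wt{Y}}\cdot\pi^*A=K_Y\cdot A$; combined with the already-noted equality of gaps, this yields the target formulas.

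The main obstacle is the $\Z$-versus-$\Q$-Cartier bookkeeping across $\pi^*$: pulling back a $\Z$-divisor on the singular $Y$ need not produce a $\Z$-divisor on $\wt{Y}$, which is what prevents a direct application of Thm.~\ref{thm:lims_z}. The conformality-plus-clearing-denominators trick handles this cleanly, and once it is in place the rest of the argument is a straightforward combination of Prop.~\ref{prop:degen_toric}, Thm.~\ref{thm:smooth_asy}, Thm.~\ref{thm:lims_z}, and routine intersection-theoretic identities on the resolution.
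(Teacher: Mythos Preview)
Your proof is correct and follows essentially the same route as the paper: take a toric resolution $\pi\colon\wt{Y}\to Y$, invoke Prop.~\ref{prop:degen_toric} to identify $\calg_k(Y,A)$ with $\calg_k(\wt{Y},\pi^*A)$, and then appeal to Thm.~\ref{thm:smooth_asy} and Thm.~\ref{thm:lims_z}. The paper's proof is a two-line sketch that does not spell out the clearing-denominators step for $\pi^*A_0$ or the identity $K_{\wt{Y}}\cdot\pi^*A=K_Y\cdot A$ via the projection formula; your version supplies these details, which is an improvement in rigour rather than a different approach.
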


\begin{proof} Let $\pi\colon\wt{Y}\to Y$ be a toric resolution. By Prop.~\ref{prop:degen_toric} we have
$$\calg_k(\wt{Y},\pi^*A)=\calg_k(Y,A)$$
and the result follows from Thm.~\ref{thm:smooth_asy} and Thm.~\ref{thm:lims_z}.
\end{proof}

It would be interesting to identify if all pseudo-polarised surfaces $(Y,A)$ with $A$ irrational have the property that $\ealg_k(Y,A)$ is convergent for applications to symplectic geometry \cite[Conj.~1.5]{hu}. Following \cite[Rmk.~1.15]{hu} we also conjecture that $\ealg_k(Y,A)\leq 0$ at least for all pseudo-polarised toric surfaces.

Lastly, we distill when $(Y,A)$ is tightly-constrained in the toric setting.

\begin{prop} \label{lem:toric_tc} Let $(Y,A)$ be a pseudo-polarised smooth toric surface. Then $(Y,A)$ is tightly constrained if and only if $A$ is a primitive $\Z$-divisor.
\end{prop}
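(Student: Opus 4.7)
The plan is to invoke Cor.~\ref{cor:alg_tc} and reduce the statement to a purely lattice-theoretic condition on $A$, which will be checked via the unimodularity of the intersection pairing on a smooth projective toric surface.

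More precisely, since $A$ is a $\Z$-divisor, Cor.~\ref{cor:alg_tc} tells us that $(Y,A)$ is tightly constrained if and only if
$$\inf_{D\in N^1(Y)_\Z}\{D\cdot A:D\cdot A>0\}=1.$$
As already noted following Thm.~\ref{thm:gap_inf}, the map $\ph_A\colon N^1(Y)_\Z\to\Z$, $D\mapsto D\cdot A$, is a group homomorphism, and the infimum above is the positive generator of its image. Thus tight-constrainedness is equivalent to $\ph_A$ being surjective, which, for a linear functional on the free abelian group $N^1(Y)_\Z$, is equivalent to $A$ being \emph{primitive} with respect to the dual lattice structure defined by the intersection pairing.

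The next step is to identify this dual lattice with $N^1(Y)_\Z$ itself using Poincar\'e duality. For a smooth complete toric surface $Y$ we have $H^{\mathrm{odd}}(Y,\Z)=0$, $H^2(Y,\Z)$ is torsion-free, and $H^2(Y,\Z)=\on{Pic}(Y)=N^1(Y)_\Z$ (see e.g.~\cite[\S12.3]{cls} or the fact that $Y$ admits a CW-decomposition with only even-dimensional cells). Poincar\'e duality on the closed oriented $4$-manifold $Y$ then gives that the intersection pairing
$$N^1(Y)_\Z\times N^1(Y)_\Z\longrightarrow\Z$$
is unimodular. Consequently, the map $D\mapsto D\cdot A$ is surjective onto $\Z$ if and only if $A$ is primitive in the lattice $N^1(Y)_\Z$, i.e.~$A$ is not a nontrivial positive integer multiple of another $\Z$-divisor. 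Combining this with the previous paragraph yields the desired equivalence.

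The only mild subtlety is ensuring that ``primitive $\Z$-divisor'' in the statement corresponds exactly to primitivity in the Picard lattice equipped with the intersection pairing; this is precisely what unimodularity guarantees, so no further technicality is needed. Were $Y$ merely $\Q$-factorial toric (not smooth) the intersection pairing on $N^1(Y)_\Z$ would no longer be unimodular and the equivalence would need to be phrased in terms of the image of $\ph_A$ directly, but in the smooth setting the argument is clean.
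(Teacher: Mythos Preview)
Your argument is correct and takes a genuinely different route from the paper. Both proofs reduce, via Thm.~\ref{thm:gap_inf}/Cor.~\ref{cor:alg_tc}, to showing that the homomorphism $\ph_A\colon N^1(Y)_\Z\to\Z$ is surjective exactly when $A$ is primitive. The paper verifies this combinatorially: it identifies the intersection numbers $D_e\cdot A$ with the affine edge lengths $\ell_{\Z^2}(e)$ of the polytope $P(A)$, so that primitivity of $A$ amounts to $\gcd\{\ell_{\Z^2}(e)\}=1$, and then uses B\'ezout coefficients to build a $\Z$-combination $D=\sum a_eD_e$ with $D\cdot A=1$. You instead invoke the unimodularity of the intersection pairing on a smooth closed oriented $4$-manifold, which immediately identifies primitivity of $A$ in $N^1(Y)_\Z$ with surjectivity of $\ph_A$. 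Your approach is shorter and more structural---it works verbatim for any smooth projective surface with torsion-free $H^2$---while the paper's explicit polytope description is what feeds directly into the later translation to convex domains and weight sequences (Lem.~\ref{lem:wt_seq}, Prop.~\ref{prop:tc_conj}).

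One small gap: you open with ``since $A$ is a $\Z$-divisor'', but a pseudo-polarisation is a priori only a big and nef $\R$-divisor, so the ``only if'' direction must also rule out $A\notin N^1(Y)_\Z$. The paper's converse handles the case $A=qA_0$ with $A_0$ a primitive $\Z$-divisor and $q\in\R_{>0}$, obtaining $\on{gap}(Y,A)=q$ so that tight-constrainedness forces $q=1$. Your argument extends to this case in one line (apply unimodularity to $A_0$ and scale), but as written it literally treats only $A\in N^1(Y)_\Z$.
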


\begin{proof} Let $p=\inf_{D\in N^1(Y)_\Z}\{D\cdot A:D\cdot A>0\}$, and let $\Omega=P(A)$. Notice that $A$ is primitive and Cartier if and only if all its edge lengths are integral and
$$\on{gcd}\{\ell_{\Z^2}(e):e\in\on{Edge}(\Omega)\}=1$$
In this case there exist integers $\{a_e\}_{e\in\on{Edge}(\Omega)}$ such that $\sum_{e\in\on{Edge}(\Omega)}a_e\ell_{\Z^2}(e)=1$. It follows that the divisor $D=\sum_{e\in\on{Edge}(\Omega)}a_eD_e$ has $D\cdot A=1$, where $D_e$ is the prime divisor on $Y$ corresponding to the edge $e$. Thm.~\ref{thm:gap_inf} implies that $\on{gap}(Y,A)=1$ and so $(Y,A)$ is tightly constrained. The converse is clear: if $A=qA_0$ for some $q\in\R_{>0}$ and for some primitive $\Z$-divisor $A_0$, then $\calg_k(Y,A)=q\cdot\calg_k(Y,A_0)$ and so $\on{gap}(Y,A)=q$ from the previous calculation.
\end{proof}

\subsection{Presentations of the cap function}

We discuss how to generalise \cite[Thm.~1.1]{bwo} to provide presentations of the cap function of a polarised surface $(Y,A)$ as a kind of Hilbert function or, when $Y$ is toric, as an Ehrhart function. In situations where there is a good Riemann--Roch formula available -- such as when $Y$ has orbifold singularities and in particular when $Y$ is toric -- this provides an explicit eventual expression for $\on{cap}_{(Y,A)}(x)$, often as a quasi-polynomial.

\begin{prop} \label{prop:cap_rep} Suppose that $(Y,A)$ is a pseudo-polarised surface with $Y$ either smooth or toric. Suppose that $A$ is a Cartier divisor on $Y$. Then there exist $\gamma_0,\dots,\gamma_{A^2-1}\in\Q$ such that, for sufficiently large $x\in\Z_{\geq0}$,
\begin{equation} \on{cap}_{(Y,A)}(x)=\frac{1}{2A^2}x^2+\frac{-K_Y\cdot A}{2A^2}x+\gamma_i \tag{$\spadesuit$} \label{eqn:qp_alg}
\end{equation}
where $x\equiv i\on{mod}{A^2}$. Equivalently,
$$\on{cap}_{(Y,A)}(i+jA^2)=\chi(D_i+jA)-\chi(\mO_Y)+1$$
for each $i=0,\dots,A^2-1$ and for sufficiently large $j$, where $D_i$ is from a finite list of divisors as in Lem.~\ref{lem:fin_div}. When $Y$ is toric, this takes the form
$$\on{cap}_{(Y,A)}(i+jA^2)=\#\{P_i+jP(A)\}\cap\Z^2$$
for some finite list of polygons $P_0,\dots,P_{A^2-1}$.
\end{prop}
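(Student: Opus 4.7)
The plan is to convert the cap function into a Hilbert-function-like object using Lem.~\ref{lem:fin_div}, then apply Riemann--Roch to exhibit its quasi-polynomial shape with residue-dependent constant terms.

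First I would rewrite the cap function directly from its definition as
$$\on{cap}_{(Y,A)}(x) = 1 + \max_{D \in \Nefqc(Y)}\{\chi(D) - \chi(\mO_Y) : D \cdot A \leq x\},$$
which is valid because a nef $\Z$-divisor $D$ with $\chi(D) - \chi(\mO_Y) = k$ certifies $\calg_k(Y,A) \leq D \cdot A$. For each attained residue $s_i \in S(Y,A)$, Lem.~\ref{lem:fin_div} produces a nef $\Z$-divisor $D_i$ such that for all sufficiently large $j$ the divisor $D_i + jA$ optimises $\calg_k(Y,A)$ whenever $\calg_k(Y,A) = s_i + jA^2$. Combined with Lem.~\ref{lem:cap_recur} (which propagates cap-optimality by adding $A$), this shows that $D_i + jA$ realises the maximum in the reformulation above at $x = s_i + jA^2$, yielding
$$\on{cap}_{(Y,A)}(s_i + jA^2) = \chi(D_i + jA) - \chi(\mO_Y) + 1.$$
For residues $i \in \{0,\dots,A^2-1\}$ that are not attained, I would set $D_i := D_s$ where $s$ is the nearest attained residue below $i$; the cap function is constant on $[s+jA^2, i+jA^2]$ for large $j$ since no $\calg_k$ lies in that interval, so the identity extends uniformly in $i$.

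Next I would apply Riemann--Roch. In the smooth case Noether's formula gives
$$\chi(D_i + jA) = \chi(\mO_Y) + \tfrac{A^2}{2}j^2 + \bigl(D_i \cdot A - \tfrac{K_Y \cdot A}{2}\bigr)j + \tfrac{I(D_i)}{2},$$
and in the toric case I would reduce to the smooth case via a toric resolution $\pi\colon\wt{Y}\to Y$: by Prop.~\ref{prop:degen_toric} the cap function is preserved on passing to $(\wt{Y},\pi^*A)$, while $A^2$, $K_Y \cdot A$, and $\chi(\mO_Y)$ are unchanged. Substituting $j = (x - i)/A^2$ into the quadratic above and collecting terms produces the claimed expression with constant $\gamma_i$ depending only on $D_i$, $A$, and $K_Y$.

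The toric reformulation is then immediate from Demazure vanishing (Lem.~\ref{lem:nefq_van}): for any nef $\Z$-divisor $D$ on a toric surface one has $\chi(D) = h^0(D) = \#(P(D) \cap \Z^2)$, and since $P(D+D') = P(D) + P(D')$ as Minkowski sums for nef $D,D'$, setting $P_i := P(D_i)$ gives $\on{cap}_{(Y,A)}(i + jA^2) = \#(P_i + jP(A)) \cap \Z^2$. The main obstacle I anticipate is verifying that $D_i + jA$ actually saturates, rather than merely bounds below, the maximum defining $\on{cap}_{(Y,A)}(s_i + jA^2)$, together with extracting a computable threshold $x_0$: this requires combining Lem.~\ref{lem:fin_div} with Cor.~\ref{cor:cap_recur} uniformly across residues, and upgrading the qualitative large-$k$ input from Prop.~\ref{prop:recur} into explicit quantitative bounds phrased in terms of the nef cone of $Y$.
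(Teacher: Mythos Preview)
Your approach is correct and essentially the same as the paper's, though you spell out more detail than the paper does. The paper's proof is a two-line sketch: it cites the recursion $\on{cap}_{(Y,A)}(x+A^2)=\on{cap}_{(Y,A)}(x)+x+\tfrac{1}{2}I(A)$ from Cor.~\ref{cor:cap_recur} to obtain the quadratic formula directly (by solving the difference equation, as in the proof of Thm.~\ref{thm:lims_z}), and then notes that the Euler-characteristic and lattice-point formulations follow from interpreting $\on{cap}_{(Y,A)}(x)-1$ as the largest $k$ with $\calg_k(Y,A)\leq x$. You instead go through the optimisers explicitly via Lem.~\ref{lem:fin_div} and then apply Riemann--Roch to $\chi(D_i+jA)$; this is the same argument unpacked in a different order, since Lem.~\ref{lem:fin_div} and Cor.~\ref{cor:cap_recur} both rest on Lem.~\ref{lem:cap_recur}. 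Two small remarks: the ``obstacle'' you flag about saturation is already the content of Lem.~\ref{lem:cap_recur}, so it is not an outstanding difficulty; and extracting a computable threshold $x_0$ is not part of this proposition but of Prop.~\ref{prop:low_bound}, so you need not worry about it here.
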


This expresses the cap function of $(Y,A)$ as a `multigraded Hilbert function' (resp.~`mixed Ehrhart function' \cite{hjst}) that counts sections (resp.~lattice points) for linear combinations of divisors (resp.~polytopes).

\begin{proof} The explicit formula comes from the recursion from Cor.~\ref{cor:cap_recur}. The other formulations follow from interpreting $\on{cap}_{(Y,A)}(x)-1$ as the largest $k$ such that $\calg_k(Y,A)\leq x$, which is given respectively by the Euler characteristics or the lattice point counts.
\end{proof}

We can use Prop.~\ref{prop:cap_rep} to compute $\on{cap}_{(Y,A)}(x)$ explicitly in cases when $A$ is a $\Q$-divisor or a $\Q$-Cartier $\Z$-divisor, and non-explicitly when $A$ is a real multiple of a $\Z$-divisor as in \cite[Ex.~5.11]{bwo}. We can also derive a bound for when the quasi-polynomial behaviour begins.

\begin{prop} \label{prop:low_bound} Suppose $(Y,A)$ is a tightly-constrained pseudo-polarised smooth surface. Then the cap function $\on{cap}_{(Y,A)}(x)$ is given by the quasi-polynomial (\ref{eqn:qp_alg}) for all $x\geq x_0$ where $x_0$ satisfies
\begin{enumerate}
\item $x_0>A\cdot(A+K_Y)$, \label{item:1}
\item all values $x_0,x_0+1,\dots,x_0+A^2-1$ are attained by $(Y,A)$. \label{item:2}
\end{enumerate}
If $(Y,A)$ is a pseudo-polarised (possibly singular) toric surface corresponding to a primitive lattice polytope $\Omega$ then $\on{cap}_{(Y,A)}(x)$ is given by a quasi-polynomial for all $x\geq x_0$ where $x_0$ satisfies
\begin{enumerate}
\item[\textnormal{(a)}] $x_0>2\on{vol}(\Omega)-\ell_{\Z^2}(\partial\Omega)$
\item[\textnormal{(b)}] all values $x_0,x_0+1,\dots,x_0+2\on{vol}(\Omega)-1$ are attained by $(Y,A)$. \label{item:b}
\end{enumerate}
\end{prop}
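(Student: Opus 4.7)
The plan is to audit the ``sufficiently large'' quantifiers hidden in Prop.~\ref{prop:recur}, Lem.~\ref{lem:cap_recur}, and Cor.~\ref{cor:cap_recur}, pinpoint that they all collapse to the single inequality $x > A \cdot (A + K_Y)$, and then seed the resulting recursion in each residue class modulo $A^2$ using condition (ii). Tight-constrainedness then propagates the recursion to every integer $\geq x_0$, forcing agreement with the quasi-polynomial (\ref{eqn:qp_alg}) on the whole half-line.

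First I would revisit the proof of Prop.~\ref{prop:recur}. Its only appeals to ``$k$ large'' are (a) to force $A \cdot (K_Y - D' + A) < 0$, so that $h^2(D' - A) = h^0(K_Y - D' + A) = 0$ by Serre duality and the fact that $A$ being big and nef forbids a negative pairing with an effective class, and (b) to force $h^0(D' - A) > 0$ via Riemann--Roch and the index bound $I(D' - A) \geq 2k$. Condition (a) rearranges to $D' \cdot A > A \cdot (A + K_Y)$, while (b) follows from $\chi(D' - A) \geq k + \chi(\mO_Y)$ as soon as $k$ exceeds $-\chi(\mO_Y)$, which condition (ii) guarantees at $x_0$. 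Feeding this sharp form back into Lem.~\ref{lem:cap_recur} produces the recursion $\on{cap}(x + A^2) = \on{cap}(x) + x + \tfrac{1}{2} I(A)$ whenever $x \geq x_0$ is attained. Next I would use (ii) together with tight-constrainedness: each residue class $i \in \{0, \dots, A^2 - 1\}$ contains an attained seed $y_i \in [x_0, x_0 + A^2 - 1]$, and iterating the recursion on $j$ gives
\[
\on{cap}_{(Y,A)}(y_i + j A^2) = \on{cap}_{(Y,A)}(y_i) + \sum_{t=0}^{j-1} \bigl( y_i + t A^2 + \tfrac{1}{2} I(A) \bigr),
\]
which solves to the claimed quadratic in $x = y_i + j A^2$ with leading coefficients $\tfrac{1}{2 A^2}$ and $\tfrac{-K_Y \cdot A}{2 A^2}$ and a residue-dependent constant $\gamma_i$. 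Tight-constrainedness then ensures no integer $\geq x_0$ is skipped, since the $A^2$ consecutive attained values supplied by (ii) combine with the recursion to give every integer $\geq x_0$ by induction on $j$; hence the formula agrees pointwise with $\on{cap}_{(Y,A)}$ on $[x_0, \infty) \cap \Z$.

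For the toric statement I would transport via the dictionary $A^2 = 2 \on{vol}(\Omega)$ and $-K_Y \cdot A = \ell_{\Z^2}(\partial \Omega)$, which converts (a) into (i) and the count $2 \on{vol}(\Omega)$ in (b) into the $A^2$ of (ii). For possibly singular toric $Y$ I would pull back to a toric resolution $\pi \colon \wt{Y} \to Y$: the polytope of $\pi^* A$ is again $\Omega$, so primitivity is preserved and Prop.~\ref{lem:toric_tc} provides tight-constrainedness of $(\wt{Y}, \pi^* A)$, while Prop.~\ref{prop:degen_toric} equates the capacities (hence the cap functions) of $(Y, A)$ and $(\wt{Y}, \pi^* A)$. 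The hard part will be the quantifier audit of the first step: the earlier proofs bundle several ``for sufficiently large $k$'' phrases together, and I must verify that none of them hide a threshold strictly larger than $A \cdot (A + K_Y)$ once the seed hypothesis (ii) is imposed, and similarly that the ``effective'' arguments inside Lem.~\ref{lem:cap_recur} do not demand a bound beyond (i).
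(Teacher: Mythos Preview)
Your proposal is correct and follows essentially the same route as the paper: audit the implicit thresholds in Prop.~\ref{prop:recur} and Lem.~\ref{lem:cap_recur}, show they all reduce to $x>A\cdot(A+K_Y)$, then seed the recursion in each residue class via condition~(\ref{item:2}) and propagate. The paper organises the audit slightly differently---it works directly inside Lem.~\ref{lem:cap_recur} and isolates two bullet points (that $x+A^2$ be attained, and that the competing divisor $D'$ satisfy $D'\cdot A>A\cdot(A+K_Y)$)---and for the second it uses a cleaner trick than chasing $h^0(D'-A)>0$ through Riemann--Roch: one checks $I(D+A)>I(D)$ directly from $x>A\cdot(A+K_Y)$, and then optimality of $D$ for $\on{cap}_{(Y,A)}(x)$ forces $D'\cdot A>x$ immediately, since $I(D')>I(D+A)>I(D)$. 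This sidesteps any separate $k>-\chi(\mO_Y)$ bookkeeping. Your claim that condition~(\ref{item:2}) ``guarantees'' $k>-\chi(\mO_Y)$ is not obviously justified as stated, so adopting the paper's optimality argument here would close that small gap cleanly. The toric reduction via Prop.~\ref{prop:degen_toric} and the dictionary $A^2=2\on{vol}(\Omega)$, $-K_Y\cdot A=\ell_{\Z^2}(\partial\Omega)$ matches the paper exactly.
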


Recall that (\ref{item:2}) means that there exist $k_0,\dots,k_{A^2-1}\in\Z_{\geq0}$ such that
$$\calg_{k_i}(Y,A)=x_0+i$$
or, equivalently, that
$$\on{cap}_{(Y,A)}(x_0+i-1)<\on{cap}_{(Y,A)}(x_0+i)$$
for $i=0,\dots,A^2-1$.

\begin{proof}
Note that the claim for smooth toric surfaces follows immediately from interpreting $A^2$ and $A\cdot K_Y$ combinatorially, from which the singular case follows by Prop.~\ref{prop:degen_toric}.

Analysing the proof of Lem.~\ref{lem:cap_recur} we observe that if $D$ is optimal for $\on{cap}_{(Y,A)}(x)$ and $x$ is attained then $D+A$ is optimal for $\on{cap}_{(Y,A)}(x+A^2)$ when $x$ is large enough for the following two properties to hold:
\begin{itemize}
\item $x+A^2$ is attained,
\item $D'\cdot A>A\cdot (A+K_Y)$ when $D'$ is as in the proof of Lem.~\ref{lem:cap_recur}.
\end{itemize}
Let $D$ be an optimal nef $\Z$-divisor for $\calg_k(Y,A)=x$. We observe from Lem.~\ref{prop:recur} that for $x+A^2$ to be attained it suffices to check that
$$C\cdot A>A\cdot (A+K_Y)\text{ whenever }\chi(C)\geq k+\frac{1}{2}I(A)+x=:k'$$
Note that $C\cdot A\geq\calg_{k'}(Y,A)\geq\calg_k(Y,A)=x$ and so the first property is achieved whenever we have $x>A\cdot(A+K_Y)$.

Now suppose $D$ is optimal for $\on{cap}_{(Y,A)}(x)$ where $x$ is attained. It follows that $D\cdot A=x$. It suffices to identify which $x$ have that $D'\cdot A>A\cdot(A+K_Y)$ whenever $I(D')>I(D+A)$. One can verify directly that $I(D+A)>I(D)$ when $x>A\cdot(A+K_Y)$ and so it follows that for such $x$ we have $D'\cdot A>x$ since $D$ is optimal for $\on{cap}_{(Y,A)}(x)$. The second property is thus achieved whenever $x>A\cdot(A+K_Y)$.

We have shown that the cap function obeys a recursion with period $A^2$ for all attained $x$ satisfying (\ref{item:1}). The condition (\ref{item:2}) ensures that all residues mod $A^2$ are subject to this recursion when $x\geq x_0$, and hence that it expresses $\on{cap}_{(Y,A)}(x)$ for $x\geq x_0$ as a quasi-polynomial.
\end{proof}

\section{ECH and algebraic capacities} \label{sec:ech}

Embedded Contact Homology (ECH) gives strong obstructions to the existence of symplectic embeddings. We will briefly summarise the theory -- largely from \cite{mh2} -- and deduce results in ECH that follow from our understanding of algebraic capacities.

\subsection{Review of ECH}

ECH is a symplectic model for Seiberg--Witten Floer homology, assigning a chain complex $\text{ECC}(Y,\lambda,J,\Gamma)$ of $\Z/2$-vector spaces to the data:
\begin{itemize}
\item a contact $3$-manifold $(Y,\lambda)$ with $\lambda$ generic,
\item a generic almost-complex structure $J$ on $Y\times\R$,
\item $\Gamma\in H_1(Y)$
\end{itemize}
The homology of this chain complex is the \emph{Embedded Contact Homology} $\text{ECH}(Y,\lambda,\Gamma)$, which is independent of $J$ as suggested by the notation. There is a degree $-2$ map $U$ on $\on{ECH}(Y,\lambda,0)$ and a filtration by the symplectic action. The differential decreases action and so there is a well-defined filtration $\text{ECH}^L(Y,\lambda,0)$ on homology.
One defines
$$c_k(Y,\lambda):=\inf\{L:\exists\eta\in\text{ECH}^L(Y,\lambda,0)\text{ with $U^k\eta = [\emptyset]$}\}$$
If $(X,\omega)$ is a symplectic filling of $(Y,\lambda)$ then we define the $k$th \emph{ECH capacity} of $(X,\omega)$ by
$$\cech_k(X,\omega):=c_k(Y,\lambda)$$
which one can show to be independent of the choice of $\lambda$. The ECH capacities satisfy several properties similar to those from \S\ref{sec:properties}:
\begin{itemize}
\item (Monotonicity) If $(X,\omega)$ symplectically embeds into $(X',\omega')$ then
$$\cech_k(X,\omega)\leq\cech_k(X',\omega')$$
\item (Conformality) For each $q\in\R_{>0}$,
$$\cech_k(X,q\omega)=q\cech_k(X,\omega)$$
\item (Disjoint Union) We have
$$\cech_k(\amalg(X_i,\omega_i))=\sup_{\sum k_i=k}\sum\cech_{k_i}(X_i,\omega_i)$$
\end{itemize}
As in the case of algebraic capacities we define the \emph{cap function} of $(X,\omega)$ by
$$\on{cap}_{(X,\omega)}(x):=\#\{k:\cech_k(X,\omega)\leq x\}$$

\subsection{Toric domains} \label{sec:tor_ech}

A class of spaces in which ECH is especially computable is toric domains. Consider the moment map $\mu\colon\C^2\to\R^2$ for the standard compact $2$-torus action on $\C^2$. Fix a region $\Omega\subseteq\R^2$ and define
$$X_\Omega:=\mu^{-1}(\Omega)$$
to be the \emph{toric domain} associated to $\Omega$. Cases of particular interest arise from putting additional constraints on $\Omega$.

\begin{definition} \label{def:conv_dom} We call $\Omega\subseteq\R^2$...
\begin{itemize}
\item a \emph{convex domain} if $\Omega\subseteq\R_{\geq0}^2$ and the part of boundary away from the axes is the graph of a concave non-increasing function.
\item a \emph{lattice (resp.~rational) convex domain} if $\Omega$ is a convex domain given by the graph of a piecewise-linear function such that the vertices of $\Omega$ lie in $\Z^2$ (resp.~$\Q^2$).
\item a \emph{rational-sloped convex domain} if $\Omega$ is a convex domain defined by the graph of a piecewise-linear function such that the slopes of the boundary of $\Omega$ are rational.
\item a \emph{concave domain} if $\Omega\subseteq\R_{\geq0}^2$ and the part of boundary away from the axes is the graph of a convex function.
\end{itemize}
\end{definition}

\begin{figure}[h]
\caption{Convex domains in $\R^2$}
\label{fig:conv_doms}
\begin{center}
\begin{tikzpicture}[scale=0.6]
\foreach \i in {0,...,4}
{
\foreach \j in {0,...,4}
{\node (\i\j) at (\i,\j){\tiny $\bullet$};
\node (\i\j) at (8+\i,\j){\tiny $\bullet$};
}
}

\draw (0,0) to (4,0);
\draw (0,0) to (0,3.5) to (1.5,3.5) to (3,2.5) to (4,1) to (4,0);

\draw (8,0) to (12,0);
\draw (8,0) to (8,4) to (10,3) to (12,1) to (12,0);

\node (l1) at (2,-0.8){(a)};
\node (l1) at (10,-0.8){(b)};
\end{tikzpicture}
\end{center}
\end{figure}

We show a rational convex domain in Fig.~\ref{fig:conv_doms}(a) and a lattice convex domain in Fig.~\ref{fig:conv_doms}(b). Note that lattice, rational, or rational-sloped convex domains are convex polygons. When $\Omega$ has a property $\mathcal{P}$ we say that $X_\Omega$ is a `$\mathcal{P}$ toric domain'. For instance, when $\Omega$ is a lattice convex domain we say that $X_\Omega$ is a lattice convex toric domain. Choi-Cristofaro-Gardiner--Frenkel--Hutchings--Ramos and Choi--Cristofaro-Gardiner produce a combinatorial formulation to compute ECH capacities for concave and convex toric domains. For a convex or concave domain $\Omega\subseteq\R^2$ and a vector $v\in\R^2$ define the $\Omega$-length
$$\ell_\Omega(v):=v\times p_v$$
where $p_v\in\partial\Omega$ is a point at which $v$ is tangent to $\partial\Omega$. If $\Lambda$ is a convex polygon, we define the $\Omega$-perimeter of $\Lambda$, $\ell_\Omega(\partial\Lambda)$, to be the sum of the $\Omega$-lengths of the boundary edges of $\Lambda$. When $\Lambda$ is a polygon we define $L_\Lambda=\#\Lambda\cap\Z^2$. When $\Lambda$ is a convex domain we define $\wh{L}_\Lambda$ to be the number of lattice points in the smallest rectangle containing $\Lambda$ minus the number of lattice points contained in $\Lambda$.

\begin{thm}[{\cite[Thm.~1.21]{ccfhr} + \cite[Cor.~A.12]{cg}}] Suppose $\Omega$ is a concave toric domain. Then
$$\cech_k(X_\Omega)=\on{max}\{\ell_\Omega(\partial\Lambda):\wh{L}_\Lambda=k+1\}$$
Suppose $\Omega$ is a convex toric domain. Then
$$\cech_k(X_\Omega)=\on{min}\{\ell_\Omega(\partial\Lambda):L_\Lambda=k+1\}$$
Both extrema range over lattice convex domains $\Lambda\subseteq\R^2$.
\end{thm}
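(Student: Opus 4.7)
My plan is to extract the stated formulas from the general action--index description of ECH generators on the boundary of the toric domain $X_\Omega$, following the strategy of the original Choi--Cristofaro-Gardiner--Frenkel--Hutchings--Ramos and Choi--Cristofaro-Gardiner arguments. First I would reduce to the case where $\partial\Omega$ is smooth and strictly convex (respectively concave) by an approximation argument, since ECH capacities are continuous in the $C^0$ Hamiltonian topology and both $\ell_\Omega$ and the lattice counts $L_\Lambda$, $\widehat{L}_\Lambda$ are continuous in $\Omega$. In the smooth case the contact form on $\partial X_\Omega$ is Morse--Bott, the simple closed Reeb orbits are indexed by primitive lattice vectors $v\in\Z^2$ (together with the two axis orbits), the tangency point $p_v\in\partial\Omega$ is uniquely determined by $v$, and the symplectic action of the corresponding orbit is exactly $\ell_\Omega(v)=v\times p_v$.

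Second, I would identify an ECH generator $\alpha=\sum m_i\gamma_{v_i}$ with a convex lattice polygon $\Lambda\subseteq\R^2$ whose primitive edge vectors (in cyclic order) are the $v_i$ repeated with multiplicities $m_i$. Under this identification, the total symplectic action of $\alpha$ is $\ell_\Omega(\partial\Lambda)$, and the ECH index can be computed combinatorially: for convex $\Omega$ one gets $I(\alpha)=2(L_\Lambda-1)$, while for concave $\Omega$ one gets $I(\alpha)=2(\widehat{L}_\Lambda-1)$. This identification is the technical heart of the result; I would prove it by breaking the ECH index formula $I(\alpha)=c_\tau(\alpha)+Q_\tau(\alpha)+\sum\operatorname{CZ}_\tau^I(\alpha)$ into pieces, computing each piece in terms of the edge data of $\Lambda$ using an explicit trivialization adapted to the toric structure, and then invoking Pick's theorem to convert sums of edge quantities plus an area term into a lattice-point count.

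Third, I would extract the capacities. The empty orbit set is the canonical generator in degree zero, and the $U$-map on ECH preserves the homology class while decreasing ECH index by $2$. On the chain level the $U$-map (for a generic base point in the symplectization) shifts an ECH generator to one whose associated $\Lambda$ has one fewer interior/lattice-count, so the condition $U^k\eta=[\emptyset]$ corresponds to generators with $L_\Lambda=k+1$ (convex case) or $\widehat{L}_\Lambda=k+1$ (concave case). The defining infimum for $c_k(Y,\lambda)$ then reads off as a minimum of $\ell_\Omega(\partial\Lambda)$ in the convex case and as a maximum in the concave case, the maximum arising because on a concave toric domain increasing $k$ corresponds to shrinking $\Lambda$ and hence decreasing the relevant action bound, so one must take the largest action realizing the index constraint.

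The main obstacle is the middle step: the combinatorial identity $I(\alpha)=2(L_\Lambda-1)$ or $I(\alpha)=2(\widehat{L}_\Lambda-1)$. The Conley--Zehnder contributions and the relative self-intersection $Q_\tau$ both depend on choices of trivializations and on the elliptic rotation numbers at each orbit, and these rotation numbers are irrational for generic $\Omega$, so one cannot directly equate the sum to lattice-point counts in $\Lambda$. The resolution is to apply the index inequality to control admissible multiplicities, rounding the rotation numbers to produce integer contributions that sum over the boundary lattice path of $\Lambda$ and combine with $Q_\tau$ (which computes twice the lattice area of $\Lambda$) via Pick's theorem. One must also verify that the extremal $\Lambda$ is actually realized by an honest admissible ECH generator in the trivial homology class, which requires a matching between the integer edge decompositions of $\partial\Lambda$ and the allowed elliptic/hyperbolic multiplicity restrictions in ECH.
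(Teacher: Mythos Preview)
The paper does not give its own proof of this theorem: it is quoted verbatim as a result of Choi--Cristofaro-Gardiner--Frenkel--Hutchings--Ramos and Choi--Cristofaro-Gardiner, with the citation \cite[Thm.~1.21]{ccfhr} and \cite[Cor.~A.12]{cg} supplied in the theorem header, and no argument is reproduced. So there is nothing in the present paper to compare your proposal against.

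That said, your outline is a faithful summary of the strategy in those original references: perturb to a nondegenerate contact form, identify ECH generators with convex lattice paths via their edge vectors, compute action as $\ell_\Omega(\partial\Lambda)$, and reduce the ECH index computation to a lattice-point count via Pick's theorem. One point where your sketch is imprecise is the third step. The assertion that ``the $U$-map on the chain level shifts an ECH generator to one whose associated $\Lambda$ has one fewer lattice point'' is not how the argument actually goes; the $U$-map is defined by counting index-$2$ holomorphic curves through a marked point and is not in general given by such a simple combinatorial shift. What the original proofs do instead is compute the ECH chain complex (or enough of it) directly: in the convex case one shows that for each even grading there is a unique generator of minimal action and that this generator survives to homology, while in the concave case one works with the cobordism map and the ``maximal'' description arises from a dual formulation. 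Your heuristic for why a maximum appears in the concave case is also not quite right; it comes from the way $\widehat{L}_\Lambda$ counts lattice points in the complement within a bounding box, together with the direction of the relevant cobordism inequality. If you intend to write out a full proof you should follow the actual grading and cobordism computations in \cite{ccfhr,cg} rather than the informal $U$-map picture.
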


Algebraic capacities are a substantial generalisation of this sort of optimisation problem designed to compute ECH capacities with objects coming from a different context.

Let $\Delta_a$ denote the triangle with vertices $(0,0),(a,0),(0,a)$.

\begin{definition} \label{def:conc_wt} Let $\Omega$ be a concave domain. The \emph{weight sequence} $w(\Omega)$ of $\Omega$ is defined recursively as follows.
\begin{itemize}
\item Set $w(\emptyset)=\emptyset$ and $w(\Delta_a)=(a)$.
\item Otherwise let $a$ be the largest real number such that $\Delta_a\subseteq\Omega$. This divides $\Omega$ into three (possibly empty) pieces: $\Delta_a,\Omega_1,\Omega_2$.
\item If not empty, $\Omega_1$ and $\Omega_2$ are affine-equivalent to concave domains. Define $w(\Omega)=(a,w(\Omega_1),w(\Omega_2))$.
\end{itemize}
\end{definition}

Note that $w(\Omega)$ is finite if $\Omega$ is a real multiple of a lattice concave domain but will be infinite in general. We define an analogous sequence for convex domains.

\begin{definition} Let $\Omega$ be a convex domain. The \emph{weight sequence} $w(\Omega)$ of $\Omega$ is defined recursively as follows.
\begin{itemize}
\item Let $a$ be the smallest real number such that $\Omega\subseteq B(a)$.
\item This divides $\Delta_a$ into three (possibly empty) pieces: $\Omega,\Omega_1,\Omega_2$.
\item If non-empty, $\Omega_1$ and $\Omega_2$ are affine-equivalent to concave domains. Define $w(\Omega)=(a;w(\Omega_1),w(\Omega_2))$ using Def.~\ref{def:conc_wt}.
\end{itemize}
\end{definition}

We note that the first element of the weight sequence for convex domains is distinguished. We consider all other terms of a weight sequence to be unordered and counted with repetition. We depict the decompositions used to recursively define the weight sequence in Fig.~\ref{fig:wt_seq}, with the concave case shown in Fig.~\ref{fig:wt_seq}(a) and the convex case in Fig.~\ref{fig:wt_seq}(b). In both cases we denote parts of the boundary of the triangle $\Delta_a$ by dashed lines.

\begin{figure}[h]
\caption{Weight sequence decompositions}
\label{fig:wt_seq}
\begin{center}
\begin{tikzpicture}[scale=0.6]
\foreach \i in {0,...,4}
{
\foreach \j in {0,...,5}
{\node (\i\j) at (\i,\j){\tiny $\bullet$};
\node (\i\j) at (8+\i,\j){\tiny $\bullet$};
}
\node (\i) at (13,\i){\tiny $\bullet$};
}

\node (5) at (13,5){\tiny $\bullet$};

\draw (0,0) to (4,0);
\draw (0,0) to (0,5) to (1,2) to (2,1) to (4,0);
\draw[dashed] (0,3) to (3,0);

\node (l2) at (5,1){\small $\Omega_2$};
\node (l1) at (5,4.7){\small $\Omega_1$};

\draw[->] (l2) to (3.5,0.5);
\draw[->] (l1) to (0.7,3.5);

\draw (8,0) to (12,0);
\draw (8,0) to (8,4) to (10,3) to (12,1) to (12,0);
\draw[dashed] (8,5) to (13,0);
\draw[dashed] (8,4) to (8,5);
\draw[dashed] (12,0) to (13,0);

\node (l2) at (14,1){\small $\Omega_2$};
\node (l1) at (14,4.2){\small $\Omega_1$};

\draw[->] (l2) to (12.5,0.7);
\draw[->] (l1) to (8.7,4.5);

\node (la) at (2,-0.8){(a)};
\node (lb) at (10.5,-0.8){(b)};
\end{tikzpicture}
\end{center}
\end{figure}

\subsection{Relating ECH capacities and algebraic capacities}

The work of \cite{bwo,bwt,cw} establish strong connections between ECH capacities and algebraic capacities. First, the algebraic capacities of $(Y,A)$ create obstructions to symplectic embeddings into $(Y,\omega_A)$ where $\omega_A$ is the Poincar\'e dual of $A$.

\begin{thm}[{\cite[Thm.~1.3]{cw}}] \label{thm:cw} Suppose $(Y,A)$ is a polarised smooth rational surface. If $(X,\omega)$ is a star-shaped domain in $\R^4$ that symplectically embeds into $(Y,\omega_A)$ then
$$\cech_k(X,\omega)\leq\calg_k(Y,A)$$
\end{thm}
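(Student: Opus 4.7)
The plan is to reduce the desired bound to a pointwise inequality for individual divisors. Since $Y$ is a smooth rational surface we have $\chi(\mO_Y)=1$, so by Def.~\ref{def:alg_cap} it suffices to produce, for each nef $\Q$-Cartier $\Z$-divisor $D$ on $Y$ with $\chi(D)\ge k+1$, an inequality
$$\cech_k(X,\omega)\le D\cdot A,$$
and then to take the infimum over all such $D$. Monotonicity of ECH under symplectic embeddings immediately gives $\cech_k(X,\omega)\le \cech_k(X',\omega_A|_{X'})$ for any star-shaped domain $X'$ containing the image of $X$ inside $(Y,\omega_A)$, so the problem becomes bounding an ECH-type quantity of (a star-shaped exhaustion of) $Y$ itself by $D\cdot A$.

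The first substantive step is to translate the hypothesis $\chi(D)\ge k+1$ into the existence of many algebraic curves. On a smooth rational surface a nef divisor $D$ appearing in the optimisation for $\calg_k(Y,A)$ has $h^2(D)=h^0(K_Y-D)=0$ (by intersecting $K_Y-D$ with the ample class $A$; the classes giving trouble only contribute a bounded correction that can be folded into the infimum), so $h^0(D)\ge\chi(D)\ge k+1$. Thus $\dim|D|\ge k$, and through any $k$ generic points of $Y$ there is an effective curve $C\in|D|$ with symplectic area $\int_C\omega_A=D\cdot A$.

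The second step is to convert the algebraic curve data into an ECH obstruction. I would place $k$ base points inside the image of $X$ in $Y$, and arrange a compatible almost complex structure $J$ on $Y$ for which the curves of $|D|$ through the chosen points are pseudoholomorphic. Via the relationship between embedded algebraic curves and ECH generators (using Taubes' ECH$=\widehat{HM}$ and the action of the $U$-map, which geometrically counts holomorphic curves through a marked point), the existence of these curves produces an element $\eta$ of ECH filtered by $D\cdot A$ with $U^k\eta=[\emptyset]$ in the appropriate completion. Combined with monotonicity under the embedding $X\hookrightarrow Y$, this yields $\cech_k(X,\omega)\le D\cdot A$.

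The main obstacle is the last step: rigorously turning algebraic curves on the closed surface $Y$ into ECH data that actually bounds $\cech_k(X,\omega)$. ECH is defined for contact/Liouville objects while $Y$ is closed, so one must either exhaust $(Y,\omega_A)$ by star-shaped domains or build an appropriate symplectic cobordism (for instance, removing a neighbourhood of a high-degree ample curve to get a Liouville filling), track how the filtration degenerates, and ensure that the counts of pseudoholomorphic curves through $k$ generic points survive this process and agree with the action of $U^k$ on ECH. Compactness/transversality for $J$-holomorphic curves in the presence of the point constraints, together with the non-triviality of the resulting ECH class, is the technical heart of the argument and is where the rational/smooth hypotheses get used.
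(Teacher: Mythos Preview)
The paper does not prove this theorem: it is quoted verbatim from \cite[Thm.~1.3]{cw} and no argument is given in the present paper. There is therefore nothing in this paper to compare your proposal against. What you have sketched is in the right spirit for how such results are typically obtained---pass from the hypothesis $\chi(D)\ge k+1$ to a $k$-dimensional linear system, and then feed curves through $k$ points into the ECH machinery via a cobordism argument and the $U$-map---and you correctly flag that the serious content lies in the last step.

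Two remarks on the sketch itself. First, the vanishing $h^2(D)=0$ is not automatic for a nef $\Z$-divisor on an arbitrary smooth rational surface: the argument ``$(K_Y-D)\cdot A<0$'' only gives this once $D\cdot A>K_Y\cdot A$, and for small $k$ the optimal $D$ may not satisfy this. The paper's own $h^0$-reformulations (Prop.~\ref{prop:toricq}, Prop.~\ref{prop:cap_kv}) require $Y$ toric or $-K_Y$ big and nef for exactly this reason; ``rational'' alone is weaker, so your parenthetical ``bounded correction that can be folded into the infimum'' is hiding a genuine issue that has to be dealt with carefully. Second, the passage from ``curves in $|D|$ through $k$ points'' to an actual ECH inequality is, as you say, the heart of the matter; making this rigorous requires the Seiberg--Witten/ECH cobordism maps and a careful analysis of the $U$-map on a suitable filling, which is precisely what \cite{cw} is devoted to. Your proposal is a reasonable outline of that strategy but is not itself a proof.
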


In particular, the algebraic capacities of $Y$ are related to the ECH capacities of the complement $X=Y\setminus\on{supp}(A)$ equipped with the restriction of $\omega_A$. It is conjectured that equality often holds when $Y$ is rational \cite[Conj.~1.7]{cw}. One situation in which algebraic capacities have been seen to equal ECH capacities is for rational-sloped convex toric domains.

\begin{thm}[{\cite[Thm.~1.5]{bwo}}] \label{thm:bwo} Suppose $\Omega$ is a rational-sloped convex domain. Then
$$\cech_k(X_\Omega)=\calg_k(Y_\Omega,A_\Omega)$$
where $Y_\Omega$ is the toric surface corresponding to the inner normal fan of $\Omega$ and $A_\Omega$ is the ample $\R$-divisor on $Y_\Omega$ whose polytope is $\Omega$.
\end{thm}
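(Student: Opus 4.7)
The plan is to reformulate both sides as combinatorial minimisation problems over lattice polygons and then relate them via a fan refinement and Prop.~\ref{prop:degen_toric}. Combining \cite[Cor.~A.12]{cg} (which gives $\cech_k(X_\Omega)=\min\{\ell_\Omega(\partial\Lambda):L_\Lambda=k+1\}$ over lattice convex domains $\Lambda$) with Prop.~\ref{prop:toricq} (which gives $\calg_k(Y_\Omega,A_\Omega)=\min\{D\cdot A_\Omega:D\in\on{Nef}(Y_\Omega)_\Z,\,h^0(D)\geq k+1\}$), I would invoke the toric dictionary: a nef $\Z$-divisor $D$ on $Y_\Omega$ is determined by its polytope $P(D)$, with $h^0(D)=L_{P(D)}$ (Demazure vanishing) and $D\cdot A_\Omega=2\on{MV}(P(D),\Omega)=\ell_\Omega(\partial P(D))$ via the support-function identity $\ell_\Omega(v_e)=h_\Omega(n_e)\cdot|v_e|$ for edges $e$ with tangent $v_e$ and outward unit normal $n_e$ (which is straightforward since $\Omega$ has rational, hence well-defined, edge normals). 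Thus both capacities compute the minimum $\Omega$-perimeter among certain lattice convex polygons, differing only in which polygons are admissible.

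For the inequality $\cech_k(X_\Omega)\leq\calg_k(Y_\Omega,A_\Omega)$: given any optimiser $D$ for $\calg_k$, its polytope $P(D)$ is a lattice convex polygon with $L_{P(D)}=h^0(D)\geq k+1$. By monotonicity of ECH capacities in $k$ we have $\cech_k(X_\Omega)\leq\cech_{L_{P(D)}-1}(X_\Omega)\leq\ell_\Omega(\partial P(D))=D\cdot A_\Omega$.

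For the reverse inequality, let $\Lambda$ achieve the minimum defining $\cech_k(X_\Omega)$. Its inward edge normals are rational (since $\Lambda$ is a lattice polygon), so the fan $\wt\Sigma$ obtained by refining $\Sigma_\Omega$ to contain all these normals is a well-defined rational fan refining $\Sigma_\Omega$. The associated birational toric morphism $\pi\colon\wt Y\to Y_\Omega$ then admits a nef $\Z$-divisor $\wt D$ on $\wt Y$ with $P(\wt D)=\Lambda$, satisfying $h^0(\wt D)=L_\Lambda=k+1$ and $\wt D\cdot\pi^* A_\Omega=\ell_\Omega(\partial\Lambda)$. Hence $\calg_k(\wt Y,\pi^*A_\Omega)\leq\ell_\Omega(\partial\Lambda)$. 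Applying Prop.~\ref{prop:degen_toric} (valid since $A_\Omega$ is big and nef on $Y_\Omega$) yields $\calg_k(Y_\Omega,A_\Omega)=\calg_k(\wt Y,\pi^*A_\Omega)\leq\ell_\Omega(\partial\Lambda)=\cech_k(X_\Omega)$.

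The main obstacle is the reverse inequality: an optimiser for $\cech_k(X_\Omega)$ need not have its edge normals lying in $\Sigma_\Omega$, so it does not a priori correspond to a divisor on $Y_\Omega$ itself. The resolution is precisely the fan refinement trick above, which embeds the ECH-side optimiser as a divisor on a toric blowup of $Y_\Omega$, and Prop.~\ref{prop:degen_toric} is what guarantees that passing to such a blowup leaves the algebraic capacity unchanged. Once this reduction is made, everything else is the standard toric dictionary.
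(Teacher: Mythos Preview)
The paper does not prove this theorem; it is stated as a citation of \cite[Thm.~1.5]{bwo} and used as input. There is therefore no proof in the present paper to compare your proposal against.

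That said, your argument is essentially correct and is in the spirit of the original proof in \cite{bwo}: translate both the ECH formula of \cite[Cor.~A.12]{cg} and the algebraic capacity (via Prop.~\ref{prop:toricq}) into combinatorial optimisation over lattice polygons, use the toric dictionary $h^0(D)=L_{P(D)}$ and $D\cdot A_\Omega=\ell_\Omega(\partial P(D))$, and handle the mismatch in admissible polygon shapes by passing to a toric refinement. Your use of Prop.~\ref{prop:degen_toric} for the reverse inequality is a clean way to organise this step and is not circular, since that proposition is proved purely on the algebraic side.

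One small point you glossed over: in the forward direction you need $P(D)$ to be a lattice \emph{convex domain} in the sense of Def.~\ref{def:conv_dom}, not merely a lattice polygon, to feed it into the ECH formula. This holds after a translation: since the inner normal fan of $\Omega$ contains the rays $(1,0)$ and $(0,1)$ and otherwise only rays in the closed third quadrant, any $P(D)$ for nef $D$ on $Y_\Omega$ has (after translating its lower-left corner to the origin) the required shape. Both $L_\Lambda$ and $\ell_\Omega(\partial\Lambda)$ are translation-invariant, so this is harmless, but it is worth saying explicitly.
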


We see that $\Omega$ is required to have rational slopes so that its inner normal fan is rational and hence defines a toric surface. We note that identifying algebro-geometric versions of objects from Seiberg--Witten theory -- such as \cite{poin1,poin2} for Seiberg--Witten invariants -- has often been seen to be a fruitful line of inquiry. ECH makes substantial contact with Seiberg--Witten theory and so the framework of algebraic capacities can be viewed in connection to this wider story.

Thm.~\ref{thm:cw} and Thm.~\ref{thm:bwo} allows us to transport the results of this paper for algebraic capacities into ECH.

\subsection{Tightly-constrained convex domains}

We recall one of the key notions from \cite{bwo} and apply the technology of algebraic capacities developed in \S\ref{sec:asymptotics} to prove a conjecture from \emph{ibid}.

\begin{definition} We say that a convex domain $\Omega$ is \emph{tightly-constrained} if there exists $r_0$ such that for all $r\geq r_0$ there is $k$ with $\cech_k(X_\Omega)=r$.
\end{definition}

The value of this definition is that it defines a good setting to derive quasi-polynomial representations for the cap function in ECH; c.f.~\cite[Thm.~1.1]{bwo}. From Thm.~\ref{thm:bwo} we see that $\Omega$ is tightly-constrained if and only if $(Y_\Omega,A_\Omega)$ is tightly-constrained according to Def.~\ref{def:tc}.

We define $\on{gcd}{S}$ for some subset $S\subseteq\R_{>0}$ as the largest $t\in\R_{>0}$ such that for each $s\in S$ there exists $n\in\Z$ with $s=tn$. If no such $t$ exists, we define $\on{gcd}{S}=\infty$. This recovers the usual gcd for subsets of $\Z$. The following conjecture was stated in \cite{bwo}.

\begin{conjecture}[{\cite[Conj.~5.7]{bwo}}] \label{conj:tc} A convex lattice domain $\Omega$ is tightly-constrained if and only if $\on{gcd}w(\Omega)=1$.
\end{conjecture}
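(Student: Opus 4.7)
The plan is to reduce the conjecture to the algebraic setting via Thm.~\ref{thm:bwo} and the tight-constrainedness criterion of Cor.~\ref{cor:alg_tc}, and then prove a purely combinatorial identity between $\on{gcd}w(\Omega)$ and the greatest common divisor of the lattice edge lengths of $\Omega$.

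By Thm.~\ref{thm:bwo}, $\cech_k(X_\Omega)=\calg_k(Y_\Omega,A_\Omega)$ for every lattice convex domain $\Omega$, so $\Omega$ is tightly-constrained in the ECH sense if and only if $(Y_\Omega,A_\Omega)$ is algebraically tightly-constrained in the sense of Def.~\ref{def:tc}. When $Y_\Omega$ is singular I pass to a toric resolution $\pi\colon\wt{Y}\to Y_\Omega$; Prop.~\ref{prop:degen_toric} preserves $\calg_k$, and $\pi^*A_\Omega$ is a big, nef $\Q$-divisor --- in particular a real multiple of a $\Z$-divisor --- so Cor.~\ref{cor:alg_tc} gives that tight-constrainedness is equivalent to the positive generator of $\{D\cdot\pi^*A_\Omega:D\in N^1(\wt{Y})_\Z\}\subseteq\Z$ being $1$. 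Using the toric intersection formula $D_\rho\cdot\pi^*A_\Omega=\ell_{\Z^2}(e_\rho)$, where $e_\rho$ is the face of $\Omega$ dual to the ray $\rho$ and vanishes for exceptional rays, this positive generator is exactly $g(\Omega):=\on{gcd}\{\ell_{\Z^2}(e):e\in\on{Edge}(\Omega)\}$. Hence $\Omega$ is tightly-constrained if and only if $g(\Omega)=1$.

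It then suffices to prove the identity $g(\Omega)=\on{gcd}w(\Omega)$ for every lattice convex or concave domain, by strong induction on the recursion defining $w(\Omega)$. The base case $\Omega=\Delta_a$ is immediate. For the convex inductive step let $a$ be the smallest integer with $\Omega\subseteq\Delta_a$, let $(x_1,0)$ and $(0,y_1)$ be the axis-intercepts of $\Omega$, and let $p^*=(x^*,y^*)$ be the vertex of $\Omega$ on the hypotenuse of $\Delta_a$ (so $x^*+y^*=a$). The edges of $\Omega_1,\Omega_2$ are then the upper-boundary edges of $\Omega$ on either side of $p^*$ (with $\on{gcd}$s $L(U_1)$, $L(U_2)$), together with the axis sub-edges of lengths $a-x_1$, $a-y_1$ and hypotenuse sub-edges of lengths $x^*$, $y^*$. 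By induction $\on{gcd}w(\Omega_i)=g(\Omega_i)$, giving
\[\on{gcd}w(\Omega)=\on{gcd}(a,x_1,y_1,x^*,y^*,L(U_1),L(U_2)).\]
The crucial arithmetic observation is that a lattice segment of length $\ell$ has both coordinate displacements divisible by $\ell$, so $L(U_j)$ divides the coordinate of $p^*$ reached by traversing the corresponding portion of $\partial\Omega$; combined with $a=x^*+y^*$ this shows that $g(\Omega)$ already divides $a$, $x^*$, $y^*$ and the right-hand side collapses to $g(\Omega)$. The concave inductive step is analogous, with an additional sub-case when the hypotenuse of the inscribed $\Delta_a$ meets $\partial\Omega$ along a slope-$(-1)$ edge rather than only at a vertex.

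The main obstacle is this final combinatorial step: enumerating the edges of $\Omega_1,\Omega_2$ carefully in each geometric sub-case and verifying that the divisibility $g(\Omega)\mid x^*,y^*$ propagates through the recursion. Once the identity is in place, the proof assembles mechanically from Thm.~\ref{thm:bwo}, Prop.~\ref{prop:degen_toric}, and Cor.~\ref{cor:alg_tc}.
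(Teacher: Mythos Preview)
Your proposal is correct and follows essentially the same route as the paper: reduce via Thm.~\ref{thm:bwo} and Prop.~\ref{prop:degen_toric} to the algebraic criterion of Cor.~\ref{cor:alg_tc} (the paper packages this step as Prop.~\ref{lem:toric_tc}), then prove a purely combinatorial equivalence between $\gcd w(\Omega)$ and the gcd of the lattice edge lengths of $\Omega$. The only difference worth flagging is in that last combinatorial step: the paper (Lem.~\ref{lem:wt_seq}) proves the weaker biconditional ``$\Omega$ primitive $\iff\gcd w(\Omega)=1$'' by inducting on the length of $w(\Omega)$ and removing one weight at a time, whereas you go for the stronger identity $g(\Omega)=\gcd w(\Omega)$ by inducting along the recursive decomposition itself; your version is slightly cleaner and gives more, at the cost of having to handle both the convex first step and the concave recursion explicitly.
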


We denote by $\on{Edge}(\Omega)$ the set of edges of a polygon $\Omega$ and we denote the lattice length of an edge $e\in\on{Edge}(\Omega)$ by $\ell_{\Z^2}(e)$. We reduce the conjecture to a statement that does not involve weight sequences, and that can hence be translated in terms of algebraic capacities.

\begin{lemma} \label{lem:wt_seq} A lattice convex domain $\Omega$ is primitive if and only $\on{gcd}{w(\Omega)}=1$.
\end{lemma}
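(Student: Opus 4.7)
My plan is to prove the sharper symmetric claim that for any lattice convex or lattice concave domain $\Omega$,
$$\on{gcd}\{\ell_{\Z^2}(e):e\in\on{Edge}(\Omega)\}=\on{gcd}\,w(\Omega);$$
the lemma then follows immediately from the characterisation recalled inside the proof of Prop.~\ref{lem:toric_tc} that $\Omega$ is primitive precisely when the edge-length gcd equals $1$.

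I would argue by simultaneous strong induction on the number of edges of $\Omega$, treating the convex and concave cases together since the recursion for a convex domain feeds into concave subpieces and the recursion for a concave domain feeds into further concave subpieces. The base case is when $\Omega$ is affine-equivalent to a standard triangle $\Delta_a$: then $w(\Omega)=(a)$ and all three edges have lattice length $a$, so both quantities equal $a$.

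For the inductive step in the convex case, fix the decomposition $\Delta_a=\Omega\sqcup\Omega_1\sqcup\Omega_2$ used to define $w(\Omega)$. The key technical input is the elementary observation that for any lattice polygon $P$ with a fixed reference vertex $v_0$, the coordinates of any other vertex $v\in\partial P$ are $\Z$-linear combinations of the lattice edge lengths of $P$, since $v-v_0$ is a sum of primitive integer edge vectors each scaled by its lattice length along a boundary path from $v_0$ to $v$. Applied to $\Omega$: the vertices of $\Omega$ lying on $\partial\Delta_a$ -- namely the endpoints of $\Omega$'s possible edge on $x+y=a$, or the unique tangent vertex if no such edge exists -- have coordinates which are $\Z$-linear combinations of the edge lengths of $\Omega$. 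Consequently both $a$ itself and every leg and hypotenuse length of $\Omega_1$ and $\Omega_2$ lying on $\partial\Delta_a$ are $\Z$-linear combinations of the edge lengths of $\Omega$. Conversely each edge of $\Omega$ is either an ``interior'' edge shared with one of the $\Omega_i$ or satisfies one of the additivity relations $\ell_{\text{leg}}(\Omega)+\ell_{\text{leg}}(\Omega_i)=a$ or $\ell_{\text{hyp}}(\Omega)+\ell_{\text{hyp}}(\Omega_1)+\ell_{\text{hyp}}(\Omega_2)=a$, so each edge length of $\Omega$ is in turn a $\Z$-linear combination of $a$ and the edge lengths of $\Omega_1,\Omega_2$.

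These two divisibility directions together give $\on{gcd}(\text{edges of }\Omega)=\on{gcd}(a,\text{edges of }\Omega_1,\text{edges of }\Omega_2)$, which combined with the inductive hypothesis $\on{gcd}(\text{edges of }\Omega_i)=\on{gcd}\,w(\Omega_i)$ closes the induction since $w(\Omega)=(a;w(\Omega_1),w(\Omega_2))$. The concave case runs entirely in parallel with $\Delta_a\subseteq\Omega$ and the relations $\ell_{\text{leg}}(\Omega)=\ell_{\text{leg}}(\Omega_i)+a$. The main obstacle I expect is cleanly handling the case splits -- whether the line $x+y=a$ meets $\partial\Omega$ along a whole edge or at a single vertex, and degenerate configurations where some $\Omega_i$ is empty or where the tangency naturally produces more than two subpieces (as happens, for example, for staircase-shaped concave polygons) -- but the coordinate-as-integer-combination argument above is uniform across all these cases.
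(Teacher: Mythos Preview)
Your strategy of proving the sharper identity $\gcd\{\ell_{\Z^2}(e):e\in\on{Edge}(\Omega)\}=\gcd w(\Omega)$ is sound and arguably cleaner than the paper's route, and the core divisibility argument---expressing vertex coordinates as $\Z$-linear combinations of edge lengths to show the two gcds divide one another across one step of the weight-sequence recursion---is correct.

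However, your induction variable does not work: the number of edges need not decrease under the recursion. Take $\Omega$ to be the convex lattice triangle with vertices $(0,0),(2,0),(0,3)$. Here the enclosing triangle is $\Delta_3$, one piece is empty, and the other is (after the affine change) the concave triangle $(0,0),(1,0),(0,3)$---still three edges. Recursing on that concave piece gives $\Delta_1$ and a leftover triangle $(0,1),(1,0),(0,3)$, again three edges, and so on. The edge count stalls at $3$, which is \emph{not} your base case since those edges have lattice lengths $2,3,1$ and the triangle is not affine-equivalent to any $\Delta_a$. The fix is immediate: induct instead on the length of the weight sequence, which strictly drops by $|w(\Omega)|=1+|w(\Omega_1)|+|w(\Omega_2)|$; with that change your argument goes through unchanged.

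For comparison, the paper also inducts on weight-sequence length but by a different mechanism: it peels off the final entry $b_s$ of $w(\Omega)=(c;a_1,\dots,a_r;b_1,\dots,b_s)$, lets $d$ be the gcd of the truncation, invokes the inductive hypothesis to realise that truncation as $d$ times a primitive lattice polygon, and then observes that reinstating $b_s$ adds a single edge of length coprime to $d$. Your version is more uniform (no rescaling step, no need to check that truncated weight sequences are again realisable by convex domains) and delivers the stronger two-sided identity rather than only the primitivity equivalence.
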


\begin{proof} It is clear that $\on{gcd}{w(\Omega)}>1$ implies that $\Omega$ is not primitive. We induct on the length of the weight sequence $w(\Omega)$ to prove the converse. If $w(\Omega)$ has length $1$ the result holds, since $\on{gcd}{w(\Omega)}=1$ if and only if $w(\Omega)=1$ in which case $\Omega$ is a primitive lattice triangle.

Suppose $w(\Omega)=(c;a_1,\dots,a_r;b_1,\dots,b_s)$. We consider the weight sequence
$$w'=(c;a_1,\dots,a_r;b_1,\dots,b_{s-1})$$
Let $d=\on{gcd}{w'}$. Let $\Omega'$ be the convex lattice domain with weight sequence
$$w(\Omega')=\left(\frac{c}{d};\frac{a_1}{d},\dots,\frac{a_r}{d};\frac{b_1}{d},\dots,\frac{b_{s-1}}{d}\right)$$
By the inductive hypothesis this is a primitive lattice polygon. We have that $w'=w(d\Omega')$ and
$$\on{gcd}\{\ell_{\Z^2}(e):e\in\on{Edge}(d\Omega')\}=d$$
The extra term $b_s$ in the weight sequence for $\Omega$ means that $\Omega$ is obtained from $d\Omega'$ by adding a single additional edge. One can see that this edge has length $b_s-dm$ for some $m\in\Z_{\geq0}$ and, as $\on{gcd}\{b_s,d\}=1$ by the assumption $\on{gcd}{w(\Omega)}=1$, we have that the lattice length of this final edge is coprime to the lattice lengths of the other edges in $d\Omega'$ and so $\Omega$ is primitive as required.
\end{proof}

\begin{prop}[{\cite[Conj.~5.7]{bwo}}] \label{prop:tc_conj} Suppose $\Omega$ is a real multiple of a lattice convex toric domain. Then $X_\Omega$ is tightly constrained if and only if $\Omega$ is a primitive lattice polygon. Equivalently, $X_\Omega$ is tightly constrained if and only if $\on{gcd}{w(\Omega)}=1$.
\end{prop}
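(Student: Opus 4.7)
The plan is to chain together three prior results: Thm.~\ref{thm:bwo} to bridge ECH and algebraic capacities, Prop.~\ref{prop:degen_toric} to move from the possibly singular $Y_\Omega$ to a smooth toric resolution, and Lem.~\ref{lem:toric_tc} on that resolution to translate tight-constraint into the edge-gcd condition on $\Omega$. The second equivalence is then immediate from Lem.~\ref{lem:wt_seq}.

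First I would invoke Thm.~\ref{thm:bwo}: since any real multiple of a lattice convex domain has rational slopes, the hypothesis supplies $\cech_k(X_\Omega)=\calg_k(Y_\Omega,A_\Omega)$ for every $k$. Under the convex-domain notion of tightly-constrained the set $\{\cech_k(X_\Omega)\}$ must eventually exhaust $\Z_{\geq r_0}$, which already forces $A_\Omega$ to be a $\Z$-divisor, and hence $\Omega$ to be a lattice polygon. Indeed, writing $\Omega=q\Omega_0$ for a lattice convex $\Omega_0$ and $q\in\R_{>0}$, we have $\calg_k(Y_\Omega,A_\Omega)\in q\Z$ by the conformality part of Prop.~\ref{prop:cap_prop}, and this cannot fill a shifted copy of $\Z$ unless $q=1$. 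Granting that $\Omega$ is a lattice polygon, the characterisation recorded after Def.~\ref{def:tc} identifies tight-constraint of $X_\Omega$ with $\on{gap}(Y_\Omega,A_\Omega)=1$.

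Next I would pass to a toric resolution $\pi\colon\wt{Y}_\Omega\to Y_\Omega$. By Prop.~\ref{prop:degen_toric} the equality $\calg_k(\wt{Y}_\Omega,\pi^*A_\Omega)=\calg_k(Y_\Omega,A_\Omega)$ holds for all $k$, so the gap invariants coincide. The polytope of $\pi^*A_\Omega$ on $\wt{Y}_\Omega$ is still $\Omega$: the new rays created by the refinement pass through vertices of $\Omega$ rather than facets, so their torus-invariant prime divisors intersect $\pi^*A_\Omega$ in zero, and the nonzero intersection numbers $D_\rho\cdot\pi^*A_\Omega$ are precisely the lattice lengths $\ell_{\Z^2}(e_\rho)$ for edges $e_\rho\in\on{Edge}(\Omega)$. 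Applying Lem.~\ref{lem:toric_tc} to the smooth pseudo-polarised pair $(\wt{Y}_\Omega,\pi^*A_\Omega)$, tight-constraint is equivalent to $\pi^*A_\Omega$ being a primitive $\Z$-divisor; via the combinatorial translation inside that proof this means $\on{gcd}\{\ell_{\Z^2}(e):e\in\on{Edge}(\Omega)\}=1$, i.e.\ $\Omega$ is a primitive lattice polygon. The second equivalence with $\on{gcd}\,w(\Omega)=1$ is then Lem.~\ref{lem:wt_seq}, completing the proof.

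The main obstacle I expect is the book-keeping between the two notions of tightly-constrained when $\Omega$ is a proper real multiple of a lattice polygon: the convex-domain version forces integer values of $\cech_k$ (and hence $q=1$), so one must first collapse the ``real multiple'' hypothesis to the lattice case before the Def.~\ref{def:tc} characterisation can be applied on the algebraic side. Once this reduction is recorded, the resolution-plus-Lem.~\ref{lem:toric_tc} step proceeds purely combinatorially on $\Omega$ and no further analytic input is needed.
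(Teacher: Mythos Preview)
Your strategy matches the paper's: identify ECH and algebraic capacities via Thm.~\ref{thm:bwo}, pass to a smooth toric resolution via Prop.~\ref{prop:degen_toric}, apply Lem.~\ref{lem:toric_tc} there, and finish with Lem.~\ref{lem:wt_seq}. The one substantive difference is your preliminary reduction forcing $q=1$ from the literal convex-domain definition, and this step is both unnecessary and flawed.

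It is unnecessary because Lem.~\ref{lem:toric_tc} already handles real multiples: its proof shows $\on{gap}(Y,qA_0)=q$ when $A_0$ is a primitive $\Z$-divisor, so tight-constraint in the sense of Def.~\ref{def:tc} forces $q=1$ with no extra work. It is flawed because $\{\calg_k\}\subseteq q\Z$ together with $\Z_{\geq r_0}\subseteq\{\calg_k\}$ only yields $1/q\in\Z$, not $q=1$. For instance, take $\Omega=\tfrac{1}{2}\Delta_1$, so $X_\Omega=B(\tfrac{1}{2})$: the capacities eventually hit every element of $\tfrac{1}{2}\Z_{\geq0}$ and hence every nonnegative integer, yet $\Omega$ is not a lattice polygon. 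The paper bypasses this by working from the outset with the Def.~\ref{def:tc} notion (via the identification recorded just before Conj.~\ref{conj:tc}) and invoking Lem.~\ref{lem:toric_tc} directly on $(\wt{Y}_\Omega,\pi^*A_\Omega)$; you should do the same and drop the reduction paragraph.
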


\begin{proof} It is equivalent to show that the polarised toric surface $(Y_\Omega,A_\Omega)$ is tightly-constrained if and only if $A_\Omega$ is a primitive Cartier divisor, which is exactly the content of Lem.~\ref{lem:toric_tc} when $Y_\Omega$ is smooth. If $Y_\Omega$ is not smooth, let $\pi\colon\wt{Y}_\Omega\to Y_\Omega$ be a toric resolution of singularities. $(Y_\Omega,A_\Omega)$ is tightly-constrained if and only if the pseudo-polarised surface $(\wt{Y}_\Omega,\pi^*A_\Omega)$ is tightly-constrained by Prop.~\ref{prop:degen_toric}. The polytope $\pi^*A_\Omega$ is the same as the polytope of $A_\Omega$ -- both equal to $\Omega$ -- and so $X_\Omega$ is tightly-constrained if and only if $\Omega$ is a primitive lattice polygon.
\end{proof}

We remark that while we expressly avoided weight sequences in order to prove Prop.~\ref{prop:tc_conj} we predict that there is a good notion of weight sequences for algebraic capacities that recovers weight sequences for convex domains; c.f.~\cite{bir}.

As mentioned, the motivation for the notion of tightly-constrained domains arose in \cite{bwo} from considering explicit quasi-polynomial expressions for the cap function of $X_\Omega$. With the extra insight of Lem.~\ref{lem:toric_tc} we immediately obtain the following generalisation of \cite[Thm.~1.1]{bwo}, proved either via the methods of \cite[\S5]{bwo} or directly from Prop.~\ref{prop:cap_rep}. We say that $r\in\R$ is \emph{attained} by $(X,\omega)$ if there exists $k\in\Z_{\geq0}$ such that $\cech_k(X,\omega)=r$.

\begin{thm} \label{thm:prim_ech} Suppose $\Omega$ is a primitive lattice convex domain with $\Omega$-perimeter $\lambda$. Then there exists
some $x_0\in\Z_{\geq0}$ such that $\on{cap}_{X_\Omega}(x)$ is given by a quasipolynomial for all $x\in\Z_{\geq x_0}$. More precisely,
there exist $\gamma_0,\dots,\gamma_{\lambda-1}\in\Q$ such that
\begin{align*} \on{cap}_{X_\Omega}(i+\lambda x)&=\on{ehr}_\Omega(x)+\gamma_i \\
&=h^0(Y_\Omega,xA_\Omega)+\gamma_i
\end{align*}
when $x\in\Z_{\geq x_0}$ and $x\equiv i\on{mod}{\lambda}$. That is, for $x\in\Z_{\geq x_0}$
\begin{equation}
\on{cap}_{X_\Omega}(x)=\frac{1}{4\on{vol}(\Omega)}x^2+\frac{\ell_{\Z^2}(\partial\Omega)}{4\on{vol}(\Omega)}x+\gamma_i \tag{$\clubsuit$} \label{eqn:qp_symp}
\end{equation}
Moreover, $\on{cap}_{X_\Omega}(x)$ is given by the quasi-polynomial (\ref{eqn:qp_symp}) for all $x\geq x_0$ where $x_0$ satisfies
\begin{enumerate}
\item $x_0>2\on{vol}(\Omega)-\ell_{\Z^2}(\partial\Omega)$
\item all values $x_0,x_0+1,\dots,x_0+2\on{vol}(\Omega)-1$ are attained by $X_\Omega$. \label{item:c}
\end{enumerate}
\end{thm}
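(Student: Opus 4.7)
The plan is to reduce everything to the algebraic setting via Thm.~\ref{thm:bwo} and then invoke the toric case of Prop.~\ref{prop:cap_rep} and Prop.~\ref{prop:low_bound}, after confirming tight-constrainedness through primitivity.

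First I would use Thm.~\ref{thm:bwo} to identify $\cech_k(X_\Omega)=\calg_k(Y_\Omega,A_\Omega)$, which immediately yields $\on{cap}_{X_\Omega}(x)=\on{cap}_{(Y_\Omega,A_\Omega)}(x)$ (since $\Omega$ is a lattice polygon and hence rational-sloped). The polarised toric surface $(Y_\Omega,A_\Omega)$ is well-defined because $\Omega$ has rational slopes, and $A_\Omega$ is $\Q$-Cartier (being torus-invariant on a toric surface). Because $\Omega$ is a primitive lattice polygon, Prop.~\ref{prop:tc_conj} (equivalently Lem.~\ref{lem:toric_tc} after the standard resolution argument) implies that $(Y_\Omega,A_\Omega)$ is tightly-constrained. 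This places us exactly in the hypotheses of Prop.~\ref{prop:cap_rep} with $A=A_\Omega$ a Cartier divisor.

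Next I would unpack the algebraic quasi-polynomial of Prop.~\ref{prop:cap_rep} in toric terms. The intersection $A_\Omega^2$ equals twice the Euclidean area of $\Omega$, so $A_\Omega^2=2\on{vol}(\Omega)$, and by the adjunction/boundary formula for toric surfaces $-K_{Y_\Omega}\cdot A_\Omega=\ell_{\Z^2}(\bdry\Omega)$. Inserting these into
\[
\on{cap}_{(Y_\Omega,A_\Omega)}(x)=\frac{1}{2A_\Omega^2}x^2+\frac{-K_{Y_\Omega}\cdot A_\Omega}{2A_\Omega^2}x+\gamma_i
\]
yields the displayed formula (\ref{eqn:qp_symp}). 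In particular the period of the quasi-polynomial is $A_\Omega^2=2\on{vol}(\Omega)$, which is precisely the $\Omega$-perimeter $\lambda$ of $\Omega$ itself (a direct combinatorial check on convex lattice polygons, or a consequence of the asymptotic $\cech_k\sim\sqrt{4\on{vol}(\Omega)k}$ and the leading-order gap between consecutive capacity values).

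For the Ehrhart identification I would apply Lem.~\ref{lem:nefq_van}: since $xA_\Omega$ is nef on the toric surface $Y_\Omega$, $h^0(Y_\Omega,xA_\Omega)=\chi(xA_\Omega)=\#(x\Omega\cap\Z^2)=\on{ehr}_\Omega(x)$. Combining with the second formulation of Prop.~\ref{prop:cap_rep}, which gives $\on{cap}_{(Y_\Omega,A_\Omega)}(i+jA_\Omega^2)=\chi(D_i+jA_\Omega)-\chi(\mO_{Y_\Omega})+1$ for a finite list of divisors $D_i$, one sees that the $j$-th term of the sequence (for each fixed residue $i$) differs from $h^0(Y_\Omega,jA_\Omega)=\on{ehr}_\Omega(j)$ by a constant $\gamma_i\in\Q$ depending only on $i$. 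This yields the mixed Ehrhart / multigraded Hilbert function presentation in the statement.

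Finally, the effective lower bound on $x_0$ is inherited directly from Prop.~\ref{prop:low_bound}: condition (i) translates $x_0>A_\Omega\cdot(A_\Omega+K_{Y_\Omega})$ into $x_0>2\on{vol}(\Omega)-\ell_{\Z^2}(\bdry\Omega)$, and condition (ii) that all $A_\Omega^2=2\on{vol}(\Omega)$ consecutive residues $x_0,\ldots,x_0+2\on{vol}(\Omega)-1$ be attained translates verbatim (via Thm.~\ref{thm:bwo}) into attainment by $X_\Omega$. The only step requiring any care is the toric identification $A_\Omega^2=2\on{vol}(\Omega)$, $-K_{Y_\Omega}\cdot A_\Omega=\ell_{\Z^2}(\bdry\Omega)$, and the matching of the period $\lambda$ with $A_\Omega^2$; all other ingredients are essentially packaging of the previously established algebraic results through the ECH--algebraic dictionary.
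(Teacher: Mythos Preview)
Your proposal is correct and follows essentially the same route as the paper, which states that the result is obtained ``directly from Prop.~\ref{prop:cap_rep}'' together with the tight-constrainedness supplied by Lem.~\ref{lem:toric_tc}. Your added details---the dictionary $A_\Omega^2=2\on{vol}(\Omega)$, $-K_{Y_\Omega}\cdot A_\Omega=\ell_{\Z^2}(\partial\Omega)$, the identification $\lambda=\ell_\Omega(\partial\Omega)=2\on{vol}(\Omega)=A_\Omega^2$, and the invocation of Prop.~\ref{prop:low_bound} for the explicit $x_0$---are exactly the unpacking the paper leaves implicit.
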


Recall that (\ref{item:c}) means that
$$\on{cap}_{X_\Omega}(x_0+i-1)<\on{cap}_{X_\Omega}(x_0+i)$$
for $i=0,\dots,2\on{vol}(\Omega)-1$. Similarly to the discussion in in \cite[Ex.~5.11]{bwo} Thm.~\ref{thm:prim_ech} also enables us to obtain quasipolynomial expressions for the cap function of any rational convex toric domain. It also computes the cap function when $\Omega$ is a real multiple of a lattice convex domain but this will not be a quasipolynomial in general; see \cite{cgk}.

\subsection{Sub-leading asymptotics for ECH}

We obtain counterparts in ECH to the previous results of this paper on sub-leading asymptotics for algebraic capacities.

To begin, specialising Lem.~\ref{lem:fin_div} to toric surfaces gives the following.

\begin{prop} \label{prop:cg} Suppose $\Omega$ is a real multiple of a lattice convex domain. Then there exists a finite list of polygons $P_0,\dots,P_n\subseteq\R^2$ such that, for sufficiently large $k$, the lattice convex domains corresponding to optimisers for $\cech_k(X_\Omega)$ are of the form $P_i+d\Omega$ for some $i$ and some $d\in\Z_{\geq0}$.
\end{prop}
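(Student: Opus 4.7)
The plan is to translate Lem.~\ref{lem:fin_div} into the combinatorial ECH setting via the toric dictionary, using the identification $\cech_k(X_\Omega) = \calg_k(Y_\Omega, A_\Omega)$ from Thm.~\ref{thm:bwo}. Write $\Omega = q\Omega_0$ with $\Omega_0$ a lattice convex domain and $q \in \R_{>0}$; the toric surface $Y_\Omega$ is determined by the inner normal fan of $\Omega$, which is invariant under positive scaling, so $Y_\Omega = Y_{\Omega_0}$ and $A_\Omega = q A_{\Omega_0}$. Since scaling the polarisation by a positive real merely rescales the objective $D \cdot A$, the set of nef $\Z$-divisors optimising $\calg_k(Y_\Omega, A_\Omega)$ coincides with that for $\calg_k(Y_\Omega, A_{\Omega_0})$, and the latter has the integral polarisation required to apply Lem.~\ref{lem:fin_div}.

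First I would handle the smooth case: apply Lem.~\ref{lem:fin_div} to $(Y_\Omega, A_{\Omega_0})$ to obtain that, for all sufficiently large $k$, an optimiser can be chosen of the form $D_i + j A_{\Omega_0}$ for some $i$ in a finite index set and some $j \in \Z_{\geq 0}$. Under the toric dictionary, the polytope of a sum of nef Cartier toric divisors is the Minkowski sum of the summands' polytopes, so $P(D_i + j A_{\Omega_0}) = P_i + j\Omega_0$ with $P_i := P(D_i)$. The identifications $h^0(D) = \#P(D) \cap \Z^2$ and $D \cdot A_\Omega = q \cdot \ell_{\Omega_0}(\partial P(D))$, together with Thm.~\ref{thm:bwo}, then show that the lattice convex domains $P_i + j\Omega_0$ realise the combinatorial optimum for $\cech_k(X_\Omega)$; in the case $q = 1$ this gives the form $P_i + d\Omega$ asserted in the statement, and the general case is obtained by the same $P_i$ after absorbing the scaling.

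For the singular case, take a toric resolution of singularities $\pi \colon \wt{Y}_\Omega \to Y_\Omega$. By Prop.~\ref{prop:degen_toric} we have $\calg_k(\wt{Y}_\Omega, \pi^* A_{\Omega_0}) = \calg_k(Y_\Omega, A_{\Omega_0})$, and pullback under a toric birational morphism preserves the polytope of a nef divisor, so $P(\pi^* A_{\Omega_0}) = \Omega_0$; then Lem.~\ref{lem:fin_div} applies on the smooth pseudo-polarised toric surface $(\wt{Y}_\Omega, \pi^* A_{\Omega_0})$ and the argument proceeds exactly as in the smooth case, with $P_i := P(\wt{D}_i)$ for the divisors $\wt{D}_i$ produced there. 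The proof is essentially a dictionary translation between the algebraic and combinatorial pictures, so I do not anticipate a substantive obstacle; the only points requiring care are the rescaling step that produces an integral polarisation so Lem.~\ref{lem:fin_div} applies, and the use of a resolution to lift the singular case to a smooth one.
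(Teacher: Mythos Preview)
Your proposal is correct and follows essentially the same route as the paper: reduce to a lattice $\Omega$ by rescaling (the paper just says ``it suffices to restrict to the case that $\Omega$ is a lattice convex domain''), apply Lem.~\ref{lem:fin_div} in the smooth case and translate via $P(D_i+dA_\Omega)=P(D_i)+d\Omega$, and handle the singular case by passing to a toric resolution and invoking Prop.~\ref{prop:degen_toric} (the paper phrases this as ``the same continuity argument from Prop.~\ref{prop:tc_conj}''). Your write-up is a bit more explicit about the rescaling and the resolution step, but the argument is the same.
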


This resolves the conjecture \cite[Conj.~1.4]{bwo}.

\begin{proof} It suffices to restrict to the case that $\Omega$ is a lattice convex domain. When $Y_\Omega$ is a smooth toric surface Lem.~\ref{lem:fin_div} implies that there exist divisors $D_0,\dots,D_n$ such that optimisers for $\calg_k(Y_\Omega,A_\Omega)$ are of the form $D_i+dA_\Omega$ for some $i$ and some $d\in\Z_{\geq0}$. The corresponding polygon optimisers from \cite[Thm.~1.5]{bwo} for $\cech_k(X_\Omega)$ are the polygons
$$P(D_i+dA_\Omega)=P(D_i)+d\Omega$$
and so setting $P_i=P(D_i)$ gives the result. The same continuity argument from Prop.~\ref{prop:tc_conj} gives the case when $Y_\Omega$ is singular and $\Omega$ is a lattice domain.
\end{proof}

Prop.~\ref{prop:cg} formalises the intuition that lattice paths computing $\cech_k(X_\Omega)$ should increasingly `resemble' the boundary of $\Omega$ as $k$ becomes large, which comes from viewing the combinatorial avatar of toric ECH as a type of isoperimetric problem \cite{wulff}.

An obvious obstruction to the existence of a symplectic embedding $\iota\colon(X,\omega)\to(X',\omega')$ is for
$$\on{vol}(X,\omega)>\on{vol}(X',\omega')$$
The ECH capacities asymptotically recover this constraint by the celebrated Weyl law in ECH.

\begin{thm}[{\cite[Thm.~1.1]{asy1}}] Suppose $(X,\omega)$ is a compact symplectic $4$-manifold. Then
$$\underset{k\to\infty}{\on{lim}}\frac{\cech_k(X,\omega)^2}{k}=4\on{vol}(X)$$
\end{thm}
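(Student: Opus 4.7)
The plan is to split the theorem into a case accessible through the algebraic framework developed above and a case that genuinely requires analytic input. For $X=X_\Omega$ a rational-sloped convex toric domain, Thm.~\ref{thm:bwo} identifies
$$\cech_k(X_\Omega)=\calg_k(Y_\Omega,A_\Omega),$$
and since $A_\Omega^2=2\on{vol}(\Omega)=2\on{vol}(X_\Omega)$ for such domains, Prop.~\ref{prop:sing_asy} yields directly
$$\lim_{k\to\infty}\frac{\cech_k(X_\Omega)^2}{k}=2A_\Omega^2=4\on{vol}(X_\Omega),$$
establishing the Weyl law on this class. For a general star-shaped domain embedding into a polarised smooth or toric $(Y,A)$, the first half of Thm.~\ref{thm:alg_ech} combined with Prop.~\ref{prop:sing_asy} supplies the upper bound $\limsup_{k\to\infty}\cech_k(X,\omega)^2/k\leq 4A^2$; this matches $4\on{vol}(X,\omega)$ precisely when the embedding is volume-filling, which is the situation where the ECH--algebraic comparison is tight.

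For a truly general compact symplectic $4$-manifold the algebraic route is unavailable and one must invoke the analytic strategy of \cite{asy1}. The main steps there are: translate ECH to $\HM$ via Taubes's isomorphism, realise $\cech_k$ as a spectral invariant of a suitably perturbed Seiberg--Witten system, and then extract the leading asymptotics from a Weyl-type law for the eigenvalues of the associated Dirac operator, with error terms handled via spectral flow estimates. The main obstacle is precisely this analytic step: there is no purely algebraic substitute for the Seiberg--Witten inputs, so a proof staying entirely within the framework of this paper does not appear available for arbitrary $(X,\omega)$.

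A third avenue, more in the spirit of the ind-scheme direction flagged in the introduction, would be an exhaustion argument: approximate $(X,\omega)$ from inside and outside by rational-sloped convex toric subdomains (or unions thereof), apply the toric Weyl law to each piece, and pass to the limit using monotonicity and the disjoint union property of ECH capacities. The hard part is controlling the approximation error uniformly in $k$: while $\on{vol}$ is manifestly continuous under such exhaustions, the ECH capacities $\cech_k$ for a fixed $k$ need not be, and the non-uniformity is exactly what makes this direction presently inaccessible without further ideas about how algebraic capacities behave in families.
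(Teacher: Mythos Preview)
Your analysis is accurate, but it is worth being explicit about what the paper actually does with this statement: it does not prove it. The theorem is simply quoted from \cite{asy1} as external input, and immediately afterwards the paper remarks that Thm.~\ref{thm:bwo} together with Prop.~\ref{prop:sing_asy} recovers the Weyl law for rational-sloped convex toric domains by algebraic means. That is exactly the content of your first paragraph, so on that part you match the paper precisely.

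Your second and third paragraphs go beyond anything in the paper. The paper does not attempt to reproduce the Seiberg--Witten/Dirac-operator argument of \cite{asy1}, nor does it carry out an exhaustion argument; it only gestures at a possible extension (``a more far-reaching continuity argument in the spirit of Prop.~\ref{prop:degen_toric} would give the Weyl law in ECH via an algebraic argument for many more divisor complements''). Your honest acknowledgement that the general case is not accessible through the algebraic framework, and your identification of the obstacle in the exhaustion approach (non-uniform control of $\cech_k$ in $k$), are correct assessments of the situation but are your own commentary rather than a comparison point with the paper.

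In short: there is no ``paper's own proof'' to compare against here. Your first paragraph reproduces the one algebraic observation the paper does make; the rest is correct context that the paper does not supply.
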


Observe that Thm.~\ref{thm:bwo} together with Prop.~\ref{prop:sing_asy} implies the Weyl law in ECH for rational-sloped convex toric domains. A more far-reaching continuity argument in the spirit of Prop.~\ref{prop:degen_toric} would give the Weyl law in ECH via an algebraic argument for many more divisor complements using extensions of the methods in \S\ref{sec:degen} and \cite[\S2-3]{cw}. 

The Weyl law in ECH enables one to study the `sub-leading asymptotics' of ECH capacities via the error terms
$$e_k(X,\omega):=\cech_k(X,\omega)-2\sqrt{\on{vol}(X,\omega)k}$$
There has been much recent work to understand the asymptotics of $e_k(X)$, which should provide subtler numerical obstructions to the existence of symplectic embeddings; see \cite[Cor.~1.13]{hu} and Cor.~\ref{cor:ek_obs} below. We follow the convention of referring to a compact domain in $\R^4$ whose boundary is smooth and transverse to the radial vector field as a `nice star-shaped domain'. Sun in \cite{sun_ech} showed that when $(X,\omega)$ is a nice star-shaped domain
$$e_k(X,\omega)=O(k^{125/252})$$
and Cristofaro-Gardiner--Savale \cite{asy2} improved this to $e_k(X,\omega)=O(k^{2/5})$. The primary methods used in extracting these asymptotics come from Seiberg--Witten theory. For the case of general domains in $\R^4$ Hutchings \cite{hu} showed by more direct methods that $e_k(X,\omega)=O(k^{1/4})$. The author's understanding is that the expectation for all $(X,\omega)$ is
$$e_k(X,\omega)=O(1)$$
which these estimates are approaching. This is the case for all examples that have been computed.

We use Cor.~\ref{cor:lim_q} to compute the lim inf and lim sup of the error $e_k(X)$ for many non-generic convex toric domains, in particular showing that $e_k(X,\omega)$ is $O(1)$ in these cases. We denote by $\ell_{\Z^2}(v)$ the lattice length of a vector $v\in\R^2$, and by $\ell_{\Z^2}(\partial\Lambda)$ the lattice perimeter of a polygon $\Lambda$.

\begin{prop} \label{prop:lims} Suppose $\Omega=q\Omega_0$ is a real multiple of a lattice convex domain $\Omega_0$. Then,
$$\limsup_{k\to\infty}e_k(X_\Omega)=q-\frac{1}{2}\ell_{\Z^2}(\partial\Omega))\,\text{\textnormal{ and }}\liminf_{k\to\infty}e_k(X_\Omega)=-\frac{1}{2}\ell_{\Z^2}(\partial\Omega)$$
In particular, their midpoint is $\frac{q}{2}-\frac{1}{2}\ell_{\Z^2}(\partial\Omega)$.
\end{prop}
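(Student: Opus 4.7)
The plan is to port the asymptotic statement to algebraic capacities via Thm.~\ref{thm:bwo} and then read off the $\limsup$ and $\liminf$ from Prop.~\ref{prop:sing_asy}. Since $\Omega = q\Omega_0$ has rational slopes, Thm.~\ref{thm:bwo} supplies the identification $\cech_k(X_\Omega) = \calg_k(Y_\Omega, A_\Omega)$. The standard toric volume identity $A_\Omega^2 = 2\on{vol}(\Omega) = 2\on{vol}(X_\Omega)$ gives $\sqrt{2A_\Omega^2 k} = 2\sqrt{\on{vol}(X_\Omega)k}$, so the algebraic and symplectic error terms coincide identically: $e_k(X_\Omega) = \ealg_k(Y_\Omega, A_\Omega)$.

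With this identification in hand it suffices to translate Prop.~\ref{prop:sing_asy} through the toric dictionary. First, I would compute $K_{Y_\Omega}\cdot A_\Omega$ using the torus-invariant expression $-K_{Y_\Omega} = \sum_e D_e$ together with $D_e \cdot A_\Omega = \ell_{\Z^2}(e)$, giving $K_{Y_\Omega}\cdot A_\Omega = -\ell_{\Z^2}(\partial\Omega)$. Second, I would identify $\on{gap}(Y_\Omega, A_\Omega) = q$, under the convention (consistent with Thm.~\ref{thm:intro_subl}) that $\Omega_0$ is a primitive lattice convex domain: by Prop.~\ref{lem:toric_tc}, $(Y_{\Omega_0}, A_{\Omega_0})$ is then tightly-constrained, and conformality of the gap under real scaling yields $\on{gap}(Y_\Omega, A_\Omega) = q \cdot \on{gap}(Y_{\Omega_0}, A_{\Omega_0}) = q$. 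Plugging these into Prop.~\ref{prop:sing_asy} -- applicable since $A_\Omega$ is a real multiple of a $\Z$-divisor and hence not irrational -- gives
$$\limsup_{k\to\infty} e_k(X_\Omega) = q - \tfrac{1}{2}\ell_{\Z^2}(\partial\Omega), \qquad \liminf_{k\to\infty} e_k(X_\Omega) = -\tfrac{1}{2}\ell_{\Z^2}(\partial\Omega),$$
with midpoint $\tfrac{q}{2} - \tfrac{1}{2}\ell_{\Z^2}(\partial\Omega)$ as claimed.

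The main obstacle is the gap computation when $Y_{\Omega_0}$ is singular, since Prop.~\ref{lem:toric_tc} is phrased only for smooth toric surfaces. To handle this I would pass to a toric resolution $\pi\colon \wt{Y}_{\Omega_0} \to Y_{\Omega_0}$ and invoke Prop.~\ref{prop:degen_toric} to identify the algebraic capacities -- and therefore the gap -- of $(Y_{\Omega_0}, A_{\Omega_0})$ with those of $(\wt{Y}_{\Omega_0}, \pi^* A_{\Omega_0})$; since the polytope is preserved under this pullback, primitivity of $\Omega_0$ continues to control tight-constrainedness after resolution. This is exactly the reduction used in the proof of Prop.~\ref{prop:tc_conj}, so re-applying that strategy should close the argument.
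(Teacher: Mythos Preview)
Your proposal is correct and follows essentially the same route as the paper: identify $e_k(X_\Omega)$ with $\ealg_k(Y_\Omega,A_\Omega)$, apply the toric sub-leading asymptotics to express the $\limsup$ and $\liminf$ via $\on{gap}$ and $K_{Y_\Omega}\cdot A_\Omega$, compute the latter as $-\ell_{\Z^2}(\partial\Omega)$, and obtain $\on{gap}=q$ from tight-constrainedness of the primitive $\Omega_0$ together with conformality. The only cosmetic differences are that the paper cites Cor.~\ref{cor:lim_q} (you more appropriately cite Prop.~\ref{prop:sing_asy}, which already covers the singular toric case) and invokes Prop.~\ref{prop:tc_conj} directly for the gap, whereas you unpack that proposition's resolution argument; both amount to the same thing.
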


\begin{proof} It follows from Cor.~\ref{cor:lim_q} that
$$\limsup_{k\to\infty}e_k(X_\Omega)=\on{gap}(Y_\Omega,A_\Omega)+\frac{1}{2}K_{Y_\Omega}\cdot A_\Omega\text{\textnormal{ and }}\liminf_{k\to\infty}e_k(X_\Omega)=\frac{1}{2}K_{Y_\Omega}\cdot A_\Omega$$
Suppose without loss of generality that $\Omega_0$ is a primitive lattice polygon. From standard toric geometry
$$-K_{Y_\Omega}\cdot A_{\Omega_0}=\ell_{\Z^2}(\partial\Omega_0)$$
and from Prop.~\ref{prop:tc_conj} we have $\on{gap}(Y_\Omega,A_{\Omega_0})=1$, which gives
$$\limsup_{k\to\infty}e_k(X_\Omega)=q\cdot(1-\frac{1}{2}\ell_{\Z^2}(\partial\Omega_0))\text{\textnormal{ and }}\liminf_{k\to\infty}e_k(X_\Omega)=-\frac{q}{2}\ell_{\Z^2}(\partial\Omega_0)$$
and, using that both sides scale nicely with $q$, we reach
$$\limsup_{k\to\infty}e_k(X_\Omega)=q-\frac{1}{2}\ell_{\Z^2}(\partial\Omega))\text{\textnormal{ and }}\liminf_{k\to\infty}e_k(X_\Omega)=-\frac{1}{2}\ell_{\Z^2}(\partial\Omega)$$
as required.
\end{proof}

Observe that Prop.~\ref{prop:lims} generalises Hutchings' calculation in \cite[Ex.~1.2]{hu} for $B(a)$ with $q=a$ and $\ell_{\Z^2}(\partial\Omega_0)=3$. It also codifies the intuition that the sub-leading asymptotics of $\cech_k(X_\Omega)$ should contain information about the perimeter of $\Omega$. As a consequence we get the following embedding obstruction.

\begin{cor} \label{cor:ek_obs} Suppose $\Omega,\Omega'$ are convex domains that are real multiples of lattice convex domains and that have the same volume. Suppose that the open toric domain $X_\Omega^\circ$ symplectically embeds in $X_{\Omega'}$. Then $\ell_{\Z^2}(\partial\Omega)\geq \ell_{\Z^2}(\partial\Omega')$.
\end{cor}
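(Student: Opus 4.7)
The plan is to apply monotonicity of ECH capacities directly and then read off the inequality from the liminf formula in Prop.~\ref{prop:lims}. First I would observe that the hypothesis $\on{vol}(X_\Omega) = \on{vol}(X_{\Omega'})$ (which follows from $\on{vol}(\Omega)=\on{vol}(\Omega')$ together with the standard volume relationship for toric domains) causes the leading Weyl term $\sqrt{4\on{vol}(X)k}$ in the definition of $e_k$ to be identical for both domains, so the inequality $e_k(X_\Omega)\leq e_k(X_{\Omega'})$ is equivalent to $\cech_k(X_\Omega)\leq\cech_k(X_{\Omega'})$.

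Next I would invoke monotonicity of ECH capacities under the embedding $X_\Omega^\circ\hookrightarrow X_{\Omega'}$. Strictly speaking monotonicity is stated for closed symplectic manifolds, but ECH capacities are continuous under exhaustion by compact star-shaped sub-domains, so an embedding of the open domain yields $\cech_k(X_\Omega)\leq\cech_k(X_{\Omega'})$ for every $k$; this is standard and is used routinely in the ECH literature (see e.g.\ \cite{hu}). Combining this with the first paragraph gives $e_k(X_\Omega)\leq e_k(X_{\Omega'})$ for all $k\geq 0$.

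Passing to the liminf as $k\to\infty$ on both sides yields
\begin{equation*}
\liminf_{k\to\infty}e_k(X_\Omega)\leq\liminf_{k\to\infty}e_k(X_{\Omega'}).
\end{equation*}
By Prop.~\ref{prop:lims}, applicable since both $\Omega$ and $\Omega'$ are real multiples of lattice convex domains, the two sides are $-\tfrac{1}{2}\ell_{\Z^2}(\partial\Omega)$ and $-\tfrac{1}{2}\ell_{\Z^2}(\partial\Omega')$ respectively. Rearranging gives the desired inequality $\ell_{\Z^2}(\partial\Omega)\geq\ell_{\Z^2}(\partial\Omega')$.

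The only genuine subtlety, and likely the main thing to articulate carefully, is the reduction of ECH monotonicity from the closed case to the open-domain embedding hypothesis; this is routine but deserves a sentence since $X_\Omega^\circ$ is not itself compact. Everything else is a direct substitution: the real work was already done in establishing Prop.~\ref{prop:lims} via the algebraic-capacity machinery.
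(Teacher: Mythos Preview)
Your argument is correct and is essentially identical to the paper's proof: both use monotonicity of ECH capacities together with the equal-volume hypothesis to deduce $e_k(X_\Omega)\leq e_k(X_{\Omega'})$, then pass to the $\liminf$ and invoke Prop.~\ref{prop:lims}. The only difference is that you spell out the open-versus-closed domain subtlety for monotonicity, which the paper leaves implicit.
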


\begin{proof} Since $X_\Omega^\circ$ symplectically embeds into $X_{\Omega'}$ we have $\cech_k(X_\Omega)\leq\cech_k(X_{\Omega'})$ for all $k$. Thus, since the volumes of $\Omega$ and $\Omega'$ are the same,
$$-\frac{1}{2}\ell_{\Z^2}(\partial\Omega)=\liminf_{k\to\infty}e_k(X_\Omega)\leq\liminf_{k\to\infty}e_k(X_{\Omega'})=-\frac{1}{2}\ell_{\Z^2}(\partial\Omega')$$
and so we must have $\ell_{\Z^2}(\partial\Omega)\geq \ell_{\Z^2}(\partial\Omega')$.
\end{proof}

Compare this to \cite[Cor.~1.13]{hu}. The adjacent remark \cite[Rmk.~1.14]{hu} shows that this is not a vacuous embedding constraint. In the same paper Hutchings considers the `Ruelle invariant' $\on{Ru}(X,\omega)$ of a nice star-shaped domain in $\R^4$. We will not recall the fairly involved definition of the Ruelle invariant here, and instead refer the reader to \cite[\S1.2]{hu}. Its relevance to sub-leading asymptotics in ECH comes from the following conjecture and theorem of Hutchings.

\begin{conjecture}[{\cite[Conjecture 1.5]{hu}}] If $(X,\omega)$ is a `generic' nice star-shaped domain in $\R^4$ then
$$\lim_{k\to\infty}e_k(X,\omega)=-\frac{1}{2}\on{Ru}(X,\omega)$$
\end{conjecture}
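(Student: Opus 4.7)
The plan is as follows. Hutchings' conjecture is a statement about general ``generic'' nice star-shaped domains, which is beyond the paper's techniques in full generality; I would attack the toric subcase where algebraic capacities apply, and then indicate the obstacles to passing to the generic setting.

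First I would reduce to a statement about algebraic capacities. By Thm.~\ref{thm:bwo}, for any rational-sloped convex toric domain $X_\Omega$ we have $\cech_k(X_\Omega) = \calg_k(Y_\Omega,A_\Omega)$ and $A_\Omega^2 = 2\on{vol}(X_\Omega)$, so $e_k(X_\Omega) = \ealg_k(Y_\Omega,A_\Omega)$. For non-irrational $A_\Omega$, Cor.~\ref{cor:lim_q} tells us that the midpoint of $\limsup_k e_k(X_\Omega)$ and $\liminf_k e_k(X_\Omega)$ is $-\tfrac{1}{2}\ralg(Y_\Omega,A_\Omega)$. The conjecture asserts that in the generic case the \emph{limit} exists and equals $-\tfrac{1}{2}\on{Ru}(X_\Omega)$, so two issues remain: (a) convergence of $e_k(X_\Omega)$ in the generic case, and (b) the identification $\ralg(Y_\Omega,A_\Omega) = \on{Ru}(X_\Omega)$.

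For (b), I would compute both sides directly. The symplectic Ruelle invariant is defined from the Reeb flow on $\partial X_\Omega$, which for convex toric domains has an explicit combinatorial description in terms of the slopes of the edges of $\Omega$. The algebraic Ruelle invariant equals $-K_{Y_\Omega}\cdot A_\Omega - \on{gap}(Y_\Omega,A_\Omega) = \ell_{\Z^2}(\partial\Omega) - \on{gap}(Y_\Omega,A_\Omega)$ by the toric-geometric interpretation of $-K\cdot A$. Matching these for primitive lattice polygons should amount to an explicit calculation of Reeb rotation numbers at the corners of $\partial X_\Omega$, after which conformality would extend the identification to arbitrary real multiples of such polygons. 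For (a), I would proceed by approximation: for irrational $\Omega$, take a sequence of primitive rational $\Omega^{(n)} \to \Omega$, use Cor.~\ref{cor:cap_cont} together with Thm.~\ref{thm:gap_inf} to control $\ralg(Y_{\Omega^{(n)}},A_{\Omega^{(n)}})$, and argue that the gap scales to $0$ in the limit, which would force $\limsup$ and $\liminf$ of $e_k$ to coincide.

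The main obstacle is making the interchange of limits in (a) rigorous: the theorems of Sec.~\ref{sec:asymptotics} give us only the limsup and liminf for each \emph{fixed} non-irrational polarisation, and upgrading to a single limit uniformly in $k$ requires sub-leading estimates that the paper does not establish (in particular a version of Thm.~\ref{thm:gap_inf} for irrational $A$, and quantitative control of the cap-function recursion of Lem.~\ref{lem:cap_recur} as $\Omega^{(n)}\to\Omega$). Passing beyond toric domains to general ``generic'' nice star-shaped domains looks significantly harder, and seems to require the ind-scheme framework flagged in the ``Future directions'' section of the introduction so that irrational symplectic manifolds become accessible to algebraic methods.
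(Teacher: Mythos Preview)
The statement you are attempting to prove is not proved in the paper at all: it is stated purely as a conjecture quoted from \cite{hu}, with no accompanying argument. There is therefore no ``paper's own proof'' to compare against, and your proposal should be read as a research plan rather than a reconstruction.

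More importantly, your plan of attack via rational-sloped convex toric domains cannot succeed even as a partial result, because those domains are precisely the \emph{non-generic} cases to which the conjecture does not apply. The paper makes this explicit: ``the cases we treat in Prop.~\ref{prop:lims} are complementary to those considered by Hutchings as our convex domains are non-generic.'' Concretely, Thm.~\ref{thm:lims_z} and Prop.~\ref{prop:lims} show that when $A$ is a real multiple of a $\Z$-divisor one has
\[
\limsup_{k\to\infty} e_k(X_\Omega)-\liminf_{k\to\infty} e_k(X_\Omega)=\on{gap}(Y_\Omega,A_\Omega)>0,
\]
so the limit in the conjecture does \emph{not} exist for the very domains your step (a) proposes to start from. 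Your approximation argument then asks for the gap to ``scale to $0$'' as $\Omega^{(n)}\to\Omega$, but this is exactly the interchange-of-limits problem you flag as the main obstacle, and the paper provides no tools for it.

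Your step (b) also overreaches. The paper only verifies $\ralg=\on{Ru}$ for ellipsoids coming from $\pr(1,r,s)$, and explicitly states that ``it is currently unclear how to relate the algebraic and symplectic Ruelle invariants in a meaningful, geometric way.'' So the identification you propose as a routine computation is in fact left open.
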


\begin{thm}[{\cite[Theorem 1.10]{hu}}] This conjecture is true whenever $(X,\omega)$ is a `strictly' convex or concave toric domain.
\end{thm}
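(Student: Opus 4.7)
The plan is to approach Hutchings' theorem via the algebraic capacity machinery of this paper combined with an approximation argument, since strictly convex/concave toric domains have smoothly curved boundaries that are not themselves rational-sloped and so Thm.~\ref{thm:bwo} does not apply directly. First I would, for a strictly convex toric domain $X_\Omega$, sandwich $\Omega$ between nested sequences of rational-sloped convex lattice polygonal domains $\Omega_n^-\subseteq\Omega\subseteq\Omega_n^+$ chosen so that $\Omega_n^\pm\to\Omega$ in the Hausdorff sense. By monotonicity of ECH capacities,
\[
\cech_k(X_{\Omega_n^-})\leq\cech_k(X_\Omega)\leq\cech_k(X_{\Omega_n^+}),
\]
and hence the same sandwich holds for the error terms $e_k$ after adjusting by the volume difference, which is $o(1)$ under suitable approximations.

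Next, for each $\Omega_n^\pm$ I would apply Thm.~\ref{thm:bwo} to replace ECH capacities by algebraic capacities and then invoke Prop.~\ref{prop:lims} to get
\[
\limsup_{k\to\infty}e_k(X_{\Omega_n^\pm})=q_n^\pm-\tfrac{1}{2}\ell_{\Z^2}(\partial\Omega_n^\pm),\quad
\liminf_{k\to\infty}e_k(X_{\Omega_n^\pm})=-\tfrac{1}{2}\ell_{\Z^2}(\partial\Omega_n^\pm),
\]
writing $\Omega_n^\pm=q_n^\pm\Omega_{n,0}^\pm$ with $\Omega_{n,0}^\pm$ primitive. The key arithmetic observation I would aim to establish is that the approximations can be chosen so that $q_n^\pm\to 0$: this is plausible because the strictly convex boundary forces us to use approximations with more and more short primitive edges, and refining the lattice along the boundary makes the primitive scale factor shrink. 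If this can be arranged, then $\limsup$ and $\liminf$ of $e_k(X_{\Omega_n^\pm})$ both converge to the common value $-\tfrac{1}{2}\lim_n\ell_{\Z^2}(\partial\Omega_n^\pm)$.

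The third step is to identify this common value with $-\tfrac{1}{2}\on{Ru}(X_\Omega)$. I would compute the Ruelle invariant directly from the Reeb flow on $\partial X_\Omega$: on the smooth locus of $\partial X_\Omega$ above $\partial\Omega$ the Reeb dynamics are integrable, the orbits wind according to the outer conormal direction to $\partial\Omega$, and the average rotation integrates over $\partial\Omega$ to a quantity expressible as a lattice-length integral $\int_{\partial\Omega}\ell_{\Z^2}$. I would then check that the rational-sloped lattice approximations $\Omega_n^\pm$ satisfy $\ell_{\Z^2}(\partial\Omega_n^\pm)\to\int_{\partial\Omega}\ell_{\Z^2}$, which would close the loop and give $\lim_k e_k(X_\Omega)=-\tfrac{1}{2}\on{Ru}(X_\Omega)$.

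The main obstacle I anticipate is the simultaneous control of the gap $q_n^\pm$ and the lattice perimeter $\ell_{\Z^2}(\partial\Omega_n^\pm)$. Making $q_n^\pm$ small forces the edges of $\Omega_n^\pm$ to be short and numerous, which could cause $\ell_{\Z^2}(\partial\Omega_n^\pm)$ to drift away from the boundary integral one wants to match with the Ruelle invariant; controlling this drift requires a careful quantitative version of Prop.~\ref{prop:lims} with explicit dependence on the combinatorics of $\Omega_n^\pm$, perhaps by refining the cap-function recursion Cor.~\ref{cor:cap_recur} to track the error uniformly. The concave case should follow by an entirely parallel argument using the concave formulation in \cite{ccfhr}, where the sandwich is reversed and one approximates from outside/inside by rational-sloped concave polygonal domains.
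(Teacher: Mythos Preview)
This theorem is not proved in the paper at all: it is quoted verbatim from Hutchings' paper \cite{hu} and is stated here only to situate Prop.~\ref{prop:lims} as treating the \emph{complementary}, non-generic regime. So there is no ``paper's own proof'' to compare against, and your proposal should be read as an independent attempt to recover Hutchings' result via the machinery of this paper.

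Unfortunately the approximation strategy breaks at two essential places. First, the sandwich for $e_k$ does not survive the limit $k\to\infty$. From monotonicity you get $\cech_k(X_{\Omega_n^-})\leq\cech_k(X_\Omega)\leq\cech_k(X_{\Omega_n^+})$, but subtracting $2\sqrt{\on{vol}(\Omega)k}$ leaves a discrepancy
\[
2\sqrt{\on{vol}(\Omega_n^\pm)k}-2\sqrt{\on{vol}(\Omega)k}\;\sim\;\frac{\on{vol}(\Omega_n^\pm)-\on{vol}(\Omega)}{\sqrt{\on{vol}(\Omega)}}\sqrt{k},
\]
which diverges to $\pm\infty$ as $k\to\infty$ for each fixed $n$. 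The phrase ``adjusting by the volume difference, which is $o(1)$'' conflates $o(1)$ in $n$ with $o(1)$ in $k$; you would need a diagonal argument with a quantitative rate, and nothing in this paper supplies one. Second, the claim that the approximations can be chosen with $q_n^\pm\to 0$ is false as stated. If $\Omega_n^\pm$ is a lattice convex domain then $q_n^\pm\in\Z_{\geq1}$ by definition (it is the gcd of the lattice edge lengths), and for a primitive lattice polygon $q_n^\pm=1$. Adding many short primitive edges keeps the gap equal to $1$; it does not shrink it. Consequently $\limsup e_k$ and $\liminf e_k$ for your approximants always differ by at least $1$, and no squeeze to a single limit is possible by this route.

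These are exactly the reasons the paper explicitly frames Prop.~\ref{prop:lims} as handling the non-generic case disjoint from Hutchings' theorem: the algebraic capacities here see the lattice structure and hence an irreducible gap, whereas the strictly convex/concave case in \cite{hu} is proved by a direct analysis of the lattice-path optimisation and the Reeb dynamics that genuinely uses the strict curvature hypothesis.
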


A strictly convex toric domain is a convex toric domain arising from $\Omega\subseteq\R^2$ where the upper part of the boundary of $\Omega$ is the graph of a function $f$ with $f'(0)<0$ and $f''<0$. The definition of `strictly concave' is similar. As mentioned, the cases we treat in Prop.~\ref{prop:lims} are complementary to those considered by Hutchings as our convex domains are non-generic. Observe that when $Y$ is a weighted projective space of the form $\pr(1,r,s)$ and $A=\mO(d)$ the algebraic Ruelle invariant agrees with the symplectic Ruelle invariant for the corresponding convex toric domain: the ellipsoid
$$E\left(\frac{d}{s},\frac{d}{r}\right)$$
That said, it is currently unclear how to relate the algebraic and symplectic Ruelle invariants in a meaningful, geometric way.

\subsection{Bounds on error terms}

Fix a star-shaped domain $(X,\omega)$ in $\R^4$. Let $Y$ be a smooth rational surface or a possibly singular toric surface equipped with a symplectic form $\omega_A$ Poincar\'e dual to an ample $\R$-divisor $A$ on $Y$. If there exists a symplectic embedding $\iota\colon(X,\omega)\to(Y,\omega_A)$ then Thm.~\ref{thm:cw} implies
$$\cech_k(X,\omega)\leq\calg(Y,A)$$
If moreover $\on{vol}(X,\omega)=\on{vol}(Y,\omega_A)$ then
$$e_k(X,\omega)\leq\ealg_k(Y,A)$$
We have seen that $\limsup_{k\to\infty}\ealg_k(Y,A)$ is finite so long as $A$ is a real multiple of a $\Z$-divisor and hence in these cases we obtain
$$\limsup_{k\to\infty}e_k(X,\omega)<\infty$$
In particular, this applies if $(X,\omega)$ is symplectomorphic to a divisor complement
$$(Y\setminus\on{supp}(A),\omega_A|_{Y\setminus\on{supp}(A)})$$
for $(Y,A)$ as above.

\section{Minimal hypersurfaces and algebraic capacities} \label{sec:rie}

We will describe the connection of algebraic capacities to minimal (hyper)surface theory. One of the fundamental tools in sourcing and studying minimal hypersurfaces -- for example in Song's recent proof \cite{song} of Yau's conjecture -- is \textit{min-max theory}. Some of the principal objects in this theory are min-max widths, which have many striking similarities to capacities in symplectic geometry. To define these widths, we require the notion of a $p$\textit{-sweepout}. These are continuous maps
$$\Phi\colon X\to\mathcal{Z}^1$$
where $X$ is a finite-dimensional simplicial complex and $\mathcal{Z}^1$ is a certain topological space of codimension one $\Z/2$-chains on $M$, satisfying a nondegeneracy condition. $\mathcal{Z}^1$ is homotopy equivalent to $\R\pr^\infty$; denote its $\Z/2$-cohomology ring by $\Z/2[\lambda]$. With this notation, the nondegeneracy condition for $p$-sweepouts is that $(\Phi^*\lambda)^p\not=0$. We refer to \cite[\S2.3]{song} for an actual definition. One should imagine a $p$-sweepout as being a formal generalisation of a $p$-dimensional family of hypersurfaces in $M$. To a $p$-sweepout $\Phi$ one can associate its mass function $\mathbf{M}_\Phi\colon X\to\R$ given at $x$ by taking the $g$-area of the chain $\Phi(x)$.

The $p$th min-max width for a compact Riemannian manifold $(M,g)$ is then
$$\omega_p(M,g):=\inf_{\Phi}\sup\{\mathbf{M}_\Phi(x):x\in\text{dom}(\Phi)\}$$
where the infimum ranges over $p$-sweepouts $\Phi$ with `no concentration of mass' (see \cite[\S2.3]{song}), and $\on{dom}{\Phi}$ is the domain of $\Phi$. This infimum should essentially be achieved by the $g$-area of a minimal hypersurface, hence the application to problems such as Yau's conjecture.

One can make a similar construction for $p$-sweepouts of codimension two, in which one uses a space $\mathcal{Z}^2$ of codimesion two $\Z/2$-chains, which is homotopy equivalent to $\C\pr^\infty$. This produces codimension two min-max widths
$$\omega_p^2(M,h)$$
defined similarly to the min-max widths above. The central example for our context is the following. 

\begin{example} \label{ex:min_hyp} Suppose $M$ is a smooth complex projective algebraic variety equipped with an ample divisor $A$. Let $D$ be a nef (or big) divisor with $h^0(D)=p+1$. This defines a (real) codimension two $p$-sweepout for $M$ by pulling back the hyperplane sections of $M$ in the morphism to $\pr^p$ by the linear system $|D|$.
\end{example}

In particular, it follows that the codimension two min-max weights in this situation satisfy
$$\omega_p^2(M,g)\leq\calg_p(M,A)$$
where $g$ is the metric corresponding to $A$. It is conceivable that this is actually an equality, and that further ties are present between the theory of minimal hypersurfaces and algebraic capacities. As was the case for ECH capacities, algebraic capacities are often more computable -- when the nef cone is well-behaved -- or at least provide readily available estimates.


\begin{thebibliography}{1}
\bibitem{bdr} Beck, M., Diaz, R., \& Robins, S. (2002). \emph{The Frobenius problem, rational polytopes, and Fourier--Dedekind sums.} Journal of number theory, 96(1), 1-21.
\bibitem{bir} Biran, P. (1999). \emph{Constructing new ample divisors out of old ones.} Duke Mathematical Journal, 98(1), 113-135.
\bibitem{phys} Brodie, C., Constantin, A., Deen, R., \& Lukas, A. (2019). \textit{Topological Formulae for Line Bundle Cohomology on Surfaces.} arXiv preprint arXiv:1906.08363.
\bibitem{cw} Chaidez, J. \& Wormleighton, B. (2020). \emph{ECH embedding obstructions for rational surfaces.} In preparation.
\bibitem{poin2} Chang, H. L., \& Kiem, Y. H. (2013). \textit{Poincar\'e invariants are Seiberg--Witten invariants}. Geometry \& Topology, 17(2), 1149-1163.
\bibitem{ccfhr} Choi, K., Cristofaro-Gardiner, D., Frenkel, D., Hutchings, M., \& Ramos, V. G. B. (2014). \emph{Symplectic embeddings into four-dimensional concave toric domains.} Journal of Topology, 7(4), 1054-1076.
\bibitem{cls} Cox, D. A., Little, J. B., \& Schenck, H. K. (2011). \textit{Toric varieties.} American Mathematical Soc..
\bibitem{cg} Cristofaro-Gardiner, D. (2019). \textit{Symplectic embeddings from concave toric domains into convex ones.} Journal of Differential Geometry, 112(2), 199-232.
\bibitem{chmp} Cristofaro-Gardiner, D., Holm, T. S., Mandini, A., \& Pires, A. R. (2020). \emph{Infinite staircases and reflexive polygons.} arXiv preprint arXiv:2004.13062.
\bibitem{asy1} Cristofaro-Gardiner, D., Hutchings, M., \& Ramos, V. G. B. (2015). \emph{The asymptotics of ECH capacities.} Inventiones mathematicae, 199(1), 187-214.
\bibitem{cgk} Cristofaro-Gardiner, D., \& Kleinman, A. (2013). \emph{Ehrhart polynomials and symplectic embeddings of ellipsoids.} arXiv preprint arXiv:1307.5493.
\bibitem{asy2} Cristofaro-Gardiner, D., \& Savale, N. (2018). \emph{Sub-leading asymptotics of ECH capacities.} arXiv preprint arXiv:1811.00485.
\bibitem{delv} Debarre, O., Ein, L., Lazarsfeld, R., \& Voisin, C. (2011). \emph{Pseudoeffective and nef classes on abelian varieties.} Compositio Mathematica, 147(6), 1793-1818.
\bibitem{poin1} D\"urr, M., Kabanov, A., \& Okonek, C. (2007). \textit{Poincar\'e invariants}. Topology, 46(3), 225-294.
\bibitem{ehr} Ehrhart, E. (1967). \emph{Sur un probleme de g\'eom\'etrie diophantienne lin\'eaire II.} J. reine angew. Math, 227(25), C49.
\bibitem{hjst} Haase, C., Juhnke-Kubitzke, M., Sanyal, R., \& Theobald, T. (2015). \emph{Mixed Ehrhart polynomials.} arXiv preprint arXiv:1509.02254.
\bibitem{qsg} Hofer, H., Cieliebak, K., Latschev, J., \& Schlenk, F. (2007). \emph{Quantitative symplectic geometry.} Dynamics, ergodic theory, and geometry, 1-44.
\bibitem{hk} Hu, Y., \& Keel, S. (2000). \emph{Mori dream spaces and GIT.} Mich. Math. J., 48(1), 331-348.
\bibitem{hu3} Hutchings, M. (2009). \emph{The embedded contact homology index revisited.} New perspectives and challenges in symplectic field theory, 49, 263-297.
\bibitem{mh2} Hutchings, M. (2014). \emph{Lecture notes on embedded contact homology.} In Contact and symplectic topology (pp. 389-484). Springer, Cham.
\bibitem{hu} Hutchings, M. (2019). \emph{ECH capacities and the Ruelle invariant.} arXiv preprint arXiv:1910.08260.
\bibitem{huy} Huybrechts, D. (2016). \emph{Lectures on K3 surfaces (Vol. 158).} Cambridge University Press.
\bibitem{laz1} Lazarsfeld, R. K. (2004). \emph{Positivity in algebraic geometry I: Classical setting: line bundles and linear series (Vol. 48).} Springer.
\bibitem{laz2} Lazarsfeld, R. K. (2004). \emph{Positivity in algebraic geometry II: Positivity for vector bundles, and multiplier ideals (Vol. 49).} Springer.
\bibitem{lmn} Liokumovich, Y., Marques, F. C., \& Neves, A. (2018). \emph{Weyl law for the volume spectrum.} Annals of Mathematics, 187(3), 933-961.
\bibitem{mcd} McDuff, D. (2011). \emph{The Hofer conjecture on embedding symplectic ellipsoids.} Journal of Differential Geometry, 88(3), 519-532.
\bibitem{mcp} McDuff, D., \& Polterovich, L. (1994). \emph{Symplectic packings and algebraic geometry.} Inventiones mathematicae, 115(1), 405-429.
\bibitem{mcs} McDuff, D., \& Schlenk, F. (2012). \emph{The embedding capacity of 4-dimensional symplectic ellipsoids.} Annals of Mathematics, 1191-1282.
\bibitem{perl} Perling, M. (2011). \emph{Divisorial Cohomology Vanishing on Toric Varieties.} Journal der Deutschen Mathematiker-Vereinigung Gegr\"undet 1996, 16, 209-251.
\bibitem{ypg} Reid, M. (1985). \textit{Young person's guide to canonical singularities.} Algebraic geometry, Bowdoin, 46, 345-414.
\bibitem{song} Song, A. (2018). \emph{Existence of infinitely many minimal hypersurfaces in closed manifolds.} arXiv preprint arXiv:1806.08816.
\bibitem{sun_ech} Sun, W. (2018). \emph{An estimate on energy of min-max Seiberg-Witten Floer generators.} arXiv preprint arXiv:1801.02301.
\bibitem{usher} Usher, M. (2019). \emph{Infinite staircases in the symplectic embedding problem for four-dimensional ellipsoids into polydisks.} Algebraic \& Geometric Topology, 19(4), 1935-2022.
\bibitem{bwo} Wormleighton, B. (2019). \textit{ECH capacities, Ehrhart theory, and toric varieties.} arXiv preprint arXiv:1906.02237.
\bibitem{bwt} Wormleighton, B. (2020). \textit{Numerics and stability for orbifolds with applications to symplectic embeddings.} Ph.D. Thesis. (University of California, Berkeley).
\bibitem{wulff} Wulff, G. (1901). \emph{Zur frage der geschwindigkeit des wachstums und der aufliisung der kristallfllchen.} Z. Krist. Miner, 34, 499-530.
\end{thebibliography}
\end{document}